\newcommand{\ux}{{\boldsymbol x}}
\newcommand{\uz}{{\boldsymbol z}}
\newcommand{\Kx}{\K[[\ux]]}
\newcommand{\qh}{QH}
\newcommand{\rsqh}{rSQH}
\newcommand{\ND}{ND}
\newcommand{\SND}{IND}
\newcommand{\NND}{NND}
\newcommand{\WND}{WND}
\newcommand{\SNND}{INND}
\newcommand{\WNND}{WNND}
\DeclareMathOperator{\In}{in}
\DeclareMathOperator{\gr}{gr}
\DeclareMathOperator{\jj}{j}
\DeclareMathOperator{\tj}{tj}
\newcommand{\tom}[1]{}
\newcommand{\lang}[1]{}
\newcommand{\laenger}[1]{}
\newcommand{\kurz}[1]{#1}
\newcommand{\bmath}{\kurz{\begin{math}}\lang{\begin{displaymath}} }
\newcommand{\emath}{\kurz{\end{math}}\lang{\end{displaymath}} }
  \newcommand{\LL}{L}
  \newcommand{\ideal}{\subset}
  \newcommand{\unideal}{\subseteq}}
  \newcommand{\LL}{{\mathds L}}
  \newcommand{\ideal}{\lhd}
  \newcommand{\unideal}{\unlhd}
\renewenvironment{enumerate}{%
  \listparindent0pt 
  \ifnum \@enumdepth >3 \@toodeep\else
      \advance\@enumdepth \@ne
      \edef\@enumctr{enum\romannumeral\the\@enumdepth}\list
      {\csname label\@enumctr\endcsname}{\usecounter
        {\@enumctr}\def\makelabel##1{\hss\llap{\upshape##1}}}\fi
      \itemsep1ex\partopsep1ex\labelsep1ex
}{%
  \endlist
}
\begin{document}

   \parindent0cm

   \title{Invariants of Hypersurface Singularities in Positive Characteristic}
   \author{Yousra Boubakri}
   \address{Universit\"at Kaiserslautern\\
     Fachbereich Mathematik\\
     Erwin--Schr\"odinger--Stra\ss e\\
     D --- 67663 Kaiserslautern
     }
   \email{yousra@mathematik.uni-kl.de}
   \author{Gert-Martin Greuel}
   \address{Universit\"at Kaiserslautern\\
     Fachbereich Mathematik\\
     Erwin--Schr\"odinger--Stra\ss e\\
     D --- 67663 Kaiserslautern\\
     Tel. +496312052850\\
     Fax +496312054795
     }
   \email{greuel@mathematik.uni-kl.de}
   \urladdr{http://www.mathematik.uni-kl.de/\textasciitilde greuel}
   \author{Thomas Markwig}
   \address{Universit\"at Kaiserslautern\\
     Fachbereich Mathematik\\
     Erwin--Schr\"odinger--Stra\ss e\\
     D --- 67663 Kaiserslautern\\
     Tel. +496312052732\\
     Fax +496312054795
     }
   \email{keilen@mathematik.uni-kl.de}
   \urladdr{http://www.mathematik.uni-kl.de/\textasciitilde keilen}


   \subjclass{Primary 14B05, 32S10, 32S25, 58K40}

   \date{\today}
   
   \keywords{Hypersurface singularities, finite determinacy, Milnor
     number, Tjurina number, Newton non-degenerate, inner Newton
     non-degenerate.}
     
   \begin{abstract}
     We study singularities $f\in\K[[x_1,\ldots,x_n]]$ over an
     algebraically closed field $\K$ of arbitrary characteristic with
     respect to right respectively contact equivalence, and we establish that the
     finiteness of the Milnor respectively the Tjurina number is
     equivalent to finite determinacy. We  give improved bounds for
     the degree of determinacy in positive characteristic. Moreover,
     we consider different non-degeneracy conditions of Kouchnirenko,
     Wall and Beelen-Pellikaan in positive characteristic, and we show
     that planar Newton non-degenerate singularities 
     satisfy Milnor's formula $\mu=2\cdot\delta-r+1$. This implies the
     absence of \emph{wild} vanishing cycles in the sense of Deligne. 
   \end{abstract}

   \maketitle


   \section{Introduction}

   \lang{
     Throughout this paper $\K$ shall be an algebraically closed field of
     arbitrary characteristic unless explicitly stated otherwise. By
     \begin{displaymath}
       \Kx=\K[[x_1,\ldots,x_n]]= 
       \left\{\sum_{\alpha\in\N^n}a_\alpha\cdot \ux^\alpha\;\Big|\;a_\alpha\in\K\right\}
     \end{displaymath}
     we denote the formal power series ring over $\K$ in $n\geq 2$
     indeterminates $x_1,\ldots,x_n$ using the usual multiindex notation
     $\ux^\alpha=x_1^{\alpha_1}\cdots x_n^{\alpha_n}$ for
     $\alpha=(\alpha_1,\ldots,\alpha_n)\in\N^n$. Moreover, we denote by
     \begin{displaymath}
       \m=\langle x_1,\ldots,x_n\rangle\ideal \Kx
     \end{displaymath}
     the unique maximal ideal of $\Kx$, so that the set of units in
     $\Kx$ is $\Kx^*=\Kx\setminus\m$.
   }
   \kurz{
     Throughout this paper $\K$ denotes an algebraically closed field of
     arbitrary characteristic unless explicitly stated otherwise,
     $\Kx=\K[[x_1,\ldots,x_n]]$
     denotes the formal power series ring over $\K$ and 
     $\m=\langle x_1,\ldots,x_n\rangle$ 
     the unique maximal ideal of $\Kx$.
   }

   \lang{
     If we are interested in
     the geometry of a power series $f\in\Kx$ there are two natural
     equivalence relations with respect to which we may want to
     consider it. 
   }

   We say that two power series $f,g\in\Kx$ are
   \emph{right equivalent} \lang{to each other} if \lang{and only if} there is an
   automorphism $\varphi\in\Aut(\Kx)$ such that $f=\varphi(g)$, and we
   denote this by $f\sim_r g$. 
   \lang{
     If we replaced $\K$ by the complex
     numbers and formal power series by convergent ones then $\varphi$
     would induce an isomorphism of the zero fiber of $f$ as well as of
     close by fibers. That is how we should interpret 
     right equivalence also in this more general setting.

     If we are only interested in the geometry of the zero fiber, then
     the second equivalence relation is the appropriate one. }
   We call $f,g\in\Kx$ 
   \emph{contact equivalent} \lang{to each other} if \lang{and only if} there is an
   automorphism $\varphi\in\Aut(\Kx)$ and a unit $u\in\Kx^*$ such that
   $f=u\cdot \varphi(g)$, and we
   denote this by $f\sim_c g$. 
   \lang{The idea here is, that $\varphi$ and
     $u$ still induce an isomorphism of the zero fibers of $f$ and $g$. }

   \lang{However, we have to replace the geometric notion of the zero fiber
   by the algebraic counterpart of its coordinate ring. That is, for a
   power series }\kurz{For } $f\in\Kx$ we call the analytic $\K$-algebra
   $R_f=\Kx/\langle f\rangle$ the 
   induced \emph{hypersurface singularity}. 
   \kurz{Obviously, two power series are contact equivalent if and
     only if the induced hypersurface singularities are isomorphic as
     local $\K$-algebras.} \lang{We obviously have
     \begin{displaymath}
       f\sim_c g\;\;\;\Longleftrightarrow\;\;\; R_f\cong R_g,
     \end{displaymath}
     i.e.\ $f$ and $g$ are contact equivalent if and only if the
     induced hypersurface singularities are isomorphic as local
     $\K$-algebras.} 

   Right equivalence as well as contact equivalence can be expressed
   via group actions. The group $\mathcal{R}=\Aut(\Kx)$ operates on $\Kx$ and the
   equivalence classes of $\Kx$ with respect to right equivalence are
   the orbits of this group, which we therefore call the \emph{right
     group}.  Similarly, the semidirect product
   \bmath
     \mathcal{K}=\Kx^*\ltimes\Aut(\Kx)
   \emath
   with multiplication
   \bmath
     (u,\varphi)\cdot(v,\psi)=(u\cdot\varphi(v),\varphi\circ\psi)
   \emath
   operates on $\Kx$ and the orbits of this group operation are the
   equivalence classes with respect to
   contact equivalence. $\mathcal{K}$ is also known as the \emph{contact
     group}.

   Over the complex numbers we say that the origin is an
   isolated singular point of $f$ if $f$ is not singular at any point
   close-by\lang{, i.e.\ the origin is the only common zero of the partial
     derivatives of $f$}. We reformulate this algebraically so 
   that it works over any field. For a power series $f\in\Kx$ we
   denote by
   \bmath
     \jj(f)=\langle f_{x_1},\ldots,f_{x_n}\rangle \unideal  \Kx
   \emath
   the \emph{Jacobian ideal} of $f$, \lang{i.e.\ the ideal generated by the
   partial derivatives of $f$,} and we call the associated algebra
   \bmath
     M_f=\Kx/\jj(f)
   \emath
   the \emph{Milnor algebra} of $f$ and its dimension
   \bmath
     \mu(f)=\dim_\K(M_f)
   \emath
   the \emph{Milnor number} of $f$. We then call \lang{the origin an
   isolated singular point of $f$, or we simply call } $f$ an
   \emph{isolated singularity} if $\mu(f)<\infty$, which is 
   equivalent to the existence of a positive integer $k$ such that
   $\m^k\subseteq\jj(f)$. 

   \lang{Similarly, over $\C$ we would call the origin an isolated singular
   point of the hypersurface singularity defined by $f$ if this
   hypersurface singularity has no other singular point close-by,
   i.e.\ the origin is the only common zero of $f$ and its partial
   derivatives. Algebraically we thus}\kurz{Now} consider the \emph{Tjurina ideal}
   \bmath
     \tj(f)=\langle f,f_{x_1},\ldots,f_{x_n}\rangle=\langle f\rangle+\jj(f)\unideal\Kx
   \emath
   of $f$, the associated \emph{Tjurina algebra}
   \bmath
     T_f=\Kx/\tj(f)
   \emath
   of $f$ and its dimension
   \bmath
     \tau(f)=\dim_\K(T_f),
   \emath
   the \emph{Tjurina number} of $f$. We then call \lang{the origin an
   isolated singular point of the hypersurface singularity $R_f$, or
   we will simply call } $R_f$ an \emph{isolated hypersurface singularity} if
   $\tau(f)<\infty$ or, equivalently, if there is a positive integer
   such that $\m^k\subseteq \tj(f)$. \kurz{Over the complex numbers
     this is equivalent to say that the zero-set of $f$ is not
     singular at any point close to the origin.}

   It is straight forward to see
   \lang{(\cite[Lem.~1.2.7]{Bou09}) } that for an automorphism $\varphi\in\Aut(\Kx)$ and a unit
   $u\in\Kx^*$ we have
   \bmath
     \jj\big(\varphi(f)\big)=\varphi\big(\jj(f)\big)
   \emath
   and
   \bmath
     \tj\big(u\varphi(f)\big)=\varphi(\tj(f)\big).
   \emath
   In particular, the Milnor number is \emph{invariant} under right equivalence and
   the Tjurina number is \emph{invariant} under contact equivalence. 
   
   It is a non-trivial theorem, using methods from complex analysis, which
   cannot be extended to other fields that the
   Milnor number is indeed invariant under contact
   equivalence (see \cite[p.~262]{Gre75}), and it is even a topological
   invariant (see \cite{LR76}). Using the 
   Lefschetz Principle the result for contact equivalence can be 
   generalised to arbitrary algebraically closed fields of
   characteristic zero (see \cite[Prop.~5.2.1,Prop.~5.3.1]{Bou09} for
   a detailed proof of Theorem \ref{thm:lefschetz1} and Theorem
   \ref{thm:lefschetz2}).  

   \begin{theorem}\label{thm:lefschetz1}
     Let $\K$ be an algebraically closed field of characteristic zero
     and $f,g\in\Kx$. 

     If $f\;\sim_c\;g$, then $\mu(f)=\mu(g)$.
   \end{theorem}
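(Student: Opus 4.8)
The plan is to split the statement according to whether the Milnor number is finite and to reduce the finite case to the complex theorem by a transfer (Lefschetz) argument. I first dispose of the non-isolated case using the already available contact invariance of $\tau$. The key point is that in characteristic zero one has $\mu(f)<\infty$ if and only if $\tau(f)<\infty$: any positive-dimensional component of the critical locus $V(\jj(f))$ through the origin is automatically contained in $V(f)$, because along a formal arc $\gamma$ with $\gamma(0)=0$ on which all partials $f_{x_i}$ vanish one has $\tfrac{d}{dt}f(\gamma(t))=\sum_i f_{x_i}(\gamma)\gamma_i'=0$, so $f(\gamma(t))$ is constant equal to $f(0)=0$ (this is where characteristic zero enters, through integration of the chain rule). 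Hence $V(\jj(f))$ and $V(\tj(f))$ have the same positive-dimensional part through the origin, and $\{0\}$ is isolated in one exactly when it is isolated in the other. Granting this, if $\mu(f)=\infty$ and $f\sim_c g$ then $\tau(f)=\infty$, so $\tau(g)=\infty$ by contact invariance of $\tau$, whence $\mu(g)=\infty=\mu(f)$.

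For the isolated case I would first replace the power series by polynomials carrying the same data. Since a singularity with finite Milnor number is finitely right determined and one with finite Tjurina number is finitely contact determined, fix an integer $k$ bounding both determinacies, and let $f^{(k)},g^{(k)}$ denote the truncations after order $k$. Then $g\sim_r g^{(k)}$, so $\mu(g)=\mu(g^{(k)})$ by right invariance of $\mu$. Using $f\sim_c g\sim_c g^{(k)}$, pick $u\in\Kx^*$ and $\varphi\in\Aut(\Kx)$ with $f=u\,\varphi(g^{(k)})$, truncate $u$ and $\varphi$ at a high order $N$ to obtain a polynomial unit $\bar u$ and a polynomial automorphism $\bar\varphi$, and set $h=\bar u\,\bar\varphi(g^{(k)})$. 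For $N$ large enough $h$ agrees with $f$ beyond the determinacy bound, so $h\sim_r f$ and therefore $\mu(h)=\mu(f)$; by construction $h=\bar u\,\bar\varphi(g^{(k)})$ is an \emph{exact} contact transform of $g^{(k)}$ witnessed by polynomial data. Thus the link between $f$ and $g$ is now encoded in finitely many coefficients, namely those of $h$, $g^{(k)}$, $\bar u$ and $\bar\varphi$.

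These finitely many elements, together with their algebraic closure inside $\K$, generate a subfield $\K_0\subseteq\K$ of finite transcendence degree over $\Q$, which therefore admits an embedding $\iota\colon\K_0\hookrightarrow\C$. Applying $\iota$ to the polynomial identity $h=\bar u\,\bar\varphi(g^{(k)})$ gives $\iota(h)\sim_c\iota(g^{(k)})$ over $\C$, so the complex theorem (\cite[p.~262]{Gre75}) yields $\mu(\iota(h))=\mu(\iota(g^{(k)}))$. Since both $h$ and $g^{(k)}$ have finite Milnor number, their Jacobian ideals are $\m$-primary and the Milnor algebras $M_h,M_{g^{(k)}}$ are finite-dimensional algebras described by linear algebra over the field of coefficients; hence $\dim_\K$ is unchanged under the field extensions $\K_0\hookrightarrow\K$ and $\iota\colon\K_0\hookrightarrow\C$. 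Chaining the resulting equalities gives $\mu(f)=\mu(h)=\mu(\iota(h))=\mu(\iota(g^{(k)}))=\mu(g^{(k)})=\mu(g)$.

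The step I expect to be the main obstacle is the reduction to finite data: the contact equivalence is a priori given by the infinite objects $u$ and $\varphi$, and to invoke the transfer principle I must replace them by finitely many field elements without changing either the equivalence class or the Milnor number. The device above — manufacturing a single polynomial $h$ that is right equivalent to $f$ yet an explicit polynomial contact transform of $g^{(k)}$ — is what makes this possible, and its justification rests squarely on the finite determinacy results. A secondary point requiring care is the base-change invariance of $\mu$, which must be argued for the finite-dimensional Milnor algebra of the $\m$-primary Jacobian ideal rather than inside the power series ring directly.
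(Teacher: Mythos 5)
Your proof is correct, but it is considerably more elaborate than the paper's, and the extra machinery is avoidable. The paper argues in two steps: since $f\sim_c g$ means $f=u\cdot\varphi(g)$ and $\mu$ is right invariant, it suffices to prove $\mu(h)=\mu(u\cdot h)$ for a unit $u$; then it observes that the coefficients of $h$, $u$ and the partials $h_{x_i}$ form an at most \emph{countable} set $A$, so $\Q(A)$ already embeds into $\C$ (the transcendence degree of $\C$ over $\Q$ being uncountable), and the complex result transfers back through the invariance of $\dim$ under extension of the coefficient field. Thus the obstacle you single out as the main one --- compressing the contact equivalence into \emph{finitely} many field elements --- is not actually an obstacle: the Lefschetz embedding needs only countably many elements, which $f$, $u$ and $\varphi$ supply for free. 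Consequently your whole middle section (finite determinacy, truncation of $u$ and $\varphi$, the auxiliary polynomial $h$) and the case split can be discarded; the paper's dimension-transfer computation covers $\mu=\infty$ as well, with no detour through $\tau$. What your route buys in exchange is a transfer-free, purely algebraic proof of the characteristic-zero equivalence $\mu(f)<\infty\Leftrightarrow\tau(f)<\infty$ (the paper's Theorem~\ref{thm:lefschetz2}, which is proved there by yet another transfer); be aware, though, that your arc argument tacitly invokes formal curve selection to produce enough arcs in a positive-dimensional component of $V(\jj(f))$, and that both your proof and the paper's implicitly assume $f,g\in\m$ --- for units the statement is false, since $1+x_1\sim_c 1$ yet $\mu(1+x_1)=0$ while $\mu(1)=\infty$.
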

   \begin{proof}
     Since the Milnor number is invariant under right equivalence, it
     suffices to show that $\mu(f)=\mu(u\cdot f)$ for any unit
     $u\in\Kx^*$. If $A$ denotes the subset of $\K$
     containing the coefficients of $u$, $f$ and all partial
     derivatives $f_{x_i}$ of $f$, then $A$ is at most countable
     infinite. Since $\Char(\K)=0$ and since $\Q\subset\C$ is a field
     extension of uncountable transcendence degree 
     the field $\Q(A)$ is isomorphic to a subfield $\LL$
     of the field $\C$ of complex numbers, and we may suppose that $f$
     and $u\cdot f$ belong to $\LL[[\ux]]\subseteq\C[[\ux]]$. Now using
     the fact that over the complex numbers $f$ and $u\cdot f$ have
     the same Milnor number we get
     \begin{align*}
       \mu(f)=&\dim_\LL(\LL[[\ux]]/\jj(f))=\dim_\C(\C[[\ux]]/\jj(f))
       \\=&\dim_\C(\C[[\ux]]/\jj(u\cdot f))=\dim_\LL(\LL[[\ux]]/\jj(u\cdot f))=\mu(u\cdot f).
     \end{align*}
   \end{proof}

   In positive characteristic this result does not hold any more;
   e.g.\ if $\Char(\K)=p>0$ and $f=x^p+y^{p-1}$, then $\mu(f)=\infty$
   while the contact equivalent series $g=(1+x)\cdot f$ has Milnor
   number $\mu(g)=p\cdot (p-2)$.

   A well-known result in complex singularity theory states that 
   the Milnor number of a power series is finite if and only if the Tjurina
   number is so\lang{, i.e.\ that the origin is an isolated singularity of
   $f$ if and only if it is an isolated singular point of the
   corresponding hypersurface} (see \cite[Lem.~2.3]{GLS07}). This fact can also be
   generalised to arbitrary fields of \emph{characteristic zero} using
   the Lefschetz principle.

   \begin{theorem}\label{thm:lefschetz2}
     Let $\K$ be an algebraically closed field of characteristic zero
     and $f\in\Kx$.

     Then $\mu(f)<\infty$ if and only if $\tau(f)<\infty$.    
   \end{theorem}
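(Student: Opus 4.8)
The plan is to mirror the Lefschetz-principle argument of Theorem \ref{thm:lefschetz1}, after separating off the implication that costs nothing. Since $\jj(f)\unideal\tj(f)$, the Tjurina algebra $T_f$ is a quotient of the Milnor algebra $M_f$, whence $\tau(f)\leq\mu(f)$; equivalently, any $k$ with $\m^k\subseteq\jj(f)$ also satisfies $\m^k\subseteq\tj(f)$. Thus $\mu(f)<\infty$ implies $\tau(f)<\infty$, and this holds over any field, regardless of the characteristic. Only the converse requires the hypothesis $\Char(\K)=0$.

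For the converse I would reduce to the complex-analytic setting exactly as before. Let $A\subseteq\K$ be the at most countable set of coefficients of $f$ and of its partial derivatives $f_{x_1},\ldots,f_{x_n}$. Because $\Char(\K)=0$ and $\Q\subset\C$ has uncountable transcendence degree, $\Q(A)$ is isomorphic to a subfield $\LL\subseteq\C$, and we may assume $f\in\LL[[\ux]]\subseteq\C[[\ux]]$.

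Over the complex numbers the equivalence $\mu(f)<\infty\Leftrightarrow\tau(f)<\infty$ is known (see \cite[Lem.~2.3]{GLS07}), so it remains to transfer finiteness of the two dimensions across the field extension $\LL\subseteq\C$, for both $\jj(f)$ and $\tj(f)$ and in both directions. This is the same base-change fact already invoked in the proof of Theorem \ref{thm:lefschetz1}: for a finitely generated ideal $I$ of $\LL[[\ux]]$ one has $\dim_\LL(\LL[[\ux]]/I)=\dim_\C(\C[[\ux]]/I\cdot\C[[\ux]])$, the two being simultaneously finite (and then equal) or simultaneously infinite. Applying it to $I=\tj(f)$ and to $I=\jj(f)$, and using that the same equality relates $\dim_\K$ to $\dim_\LL$, one obtains the chain
\begin{displaymath}
  \tau(f)<\infty \;\Longleftrightarrow\; \dim_\C\big(\C[[\ux]]/\tj(f)\big)<\infty \;\Longleftrightarrow\; \dim_\C\big(\C[[\ux]]/\jj(f)\big)<\infty \;\Longleftrightarrow\; \mu(f)<\infty,
\end{displaymath}
the middle equivalence being the complex input.

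The only real content sits in the base-change statement, i.e.\ that enlarging the coefficient field from $\LL$ to $\C$ neither creates nor destroys finiteness of $\dim_\K(\Kx/I)$. I expect to argue this via standard bases: a standard basis of $I$ over $\LL$ remains one over $\C$, so the leading ideal $\In(I)$ is insensitive to the field, and hence so is the count of standard monomials that computes the dimension. This is precisely the point at which characteristic zero, through the embedding into $\C$, is indispensable; the example $f=x^p+y^{p-1}$ in characteristic $p$ shows that the implication $\tau(f)<\infty\Rightarrow\mu(f)<\infty$ genuinely fails otherwise.
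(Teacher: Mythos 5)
Your proposal is correct and follows essentially the same Lefschetz-principle argument as the paper: embed $\Q(A)$ as a subfield $\LL$ of $\C$, transfer the dimensions of the Milnor and Tjurina algebras across the extension, and invoke the known complex equivalence. Your two additions --- observing that $\mu(f)<\infty\Rightarrow\tau(f)<\infty$ holds in any characteristic because $T_f$ is a quotient of $M_f$, and justifying the dimension base-change via standard bases --- merely make explicit what the paper's proof leaves implicit.
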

   \begin{proof}
     Let $A$ be the set of
     coefficients of $f$ and all its partial derivatives. As in the
     proof of Theorem \ref{thm:lefschetz1} the field $\Q(A)$ is
     isomorphic to a subfield $\LL$ of $\C$. We may therefore assume
     that $f\in\LL[[\ux]]\subset\C[[\ux]]$, so that
     \begin{displaymath}
       \mu(f)=\dim_\LL(\LL[[\ux]]/\jj(f))=\dim_\C(\C[[\ux]]/\jj(f)),
     \end{displaymath}
     \begin{displaymath}
       \tau(f)=\dim_\LL(\LL[[\ux]]/\tj(f))=\dim_\C(\C[[\ux]]/\tj(f)).
     \end{displaymath}
     Using the result for $\C$,  $\tau(f)$ is finite if and only if $\mu(f)$ is finite.     
   \end{proof}

   For fields of positive characteristic this is false. The
   same example as above shows that
   $\tau(f)=p\cdot (p-2)$ while $\mu(f)=\infty$. 

   \bigskip

   Our principle interest is the classification of  power series with
   respect to right respectively 
   contact equivalence\lang{, where the latter is the same as to say that we
   are interested in classifying hypersurface singularities up to
   isomorphism} (see \cite{BGM10a}). In order to do this we need
   finiteness conditions and therefore we
   restrict to the isolated case, i.e.\ to the case that $f$ is an
   isolated singularity for right equivalence and to the
   case that $R_f$ is an isolated hypersurface singularity  for
   contact equivalence, which are two distinct conditions
   in positive characteristic.

   A first important step in the attempt to classify singularities
   from a theoretical point of view as well as from a practical one is
   to know that the equivalence class is determined by a finite number
   of terms of the power series $f$ and to find the (smallest) corresponding degree
   bound.  We say that $f$ is \emph{right $k$-determined} if $f$ is
   right equivalent to every $g\in\Kx$ whose $k$-jet coincides with that of
   $f$, where the $k$-jet of $f$ is
   \bmath
     \jet_k(f)\lang{=\overline{f}}\in\Kx/\m^{k+1}
   \emath
   the residue class of $f$ modulo the $k+1$-st power of the maximal
   ideal. \lang{I.e.\ $f$ is right $k$-determined if it is right equivalent
   to every $g$ which coincides with $f$ up to order $k$.} Similarly,
   we call $f$ \emph{contact $k$-determined} if $f$ is
   contact equivalent to every $g$ whose $k$-jet coincides with that
   of $f$. In both situations we say that $f$ is \emph{finitely
     determined} if it is $k$-determined for some positive integer
   $k$, and we call the least such $k$ the \emph{determinacy} of $f$. 

   Over the complex numbers it is well known that  $f$ is finitely
   determined w.r.t.\ right or contact equivalence if and only if $f$
   \lang{respectively $R_f$} is an isolated singularity. It is
   straight forward to generalise this to any field of characteristic
   zero, using the infinitesimal characterisation of local
   triviality. Since the proof involves the solution of a
   differential equation, it does not work in positive
   characteristic. 
   \kurz{One main result of this paper is the following
     generalisation to arbitrary characteristic:
     Finite right
     respectively contact determinacy of $f$ is equivalent to the isolatedness
     of the singularity $f$ respectively $R_f$ (see Theorem
     \ref{thm:fd=isolated}).}
   \lang{We will, however, prove the following theorem.

     \begin{varthm-italic-break}[Theorem~\ref{thm:fd=isolated}]
       Let $0\not=f\in\m\ideal\Kx$ be a power series.
       \begin{enumerate}
       \item $f$ is an isolated singularity if and only
         if $f$ is finitely right determined.
       \item $R_f$ is an isolated hypersurface singularity if and only
         if $f$ is finitely contact determined.
       \end{enumerate}     
     \end{varthm-italic-break}
   }

   In the complex case and thus for arbitrary fields of characteristic
   zero it is known that the right determinacy is at most
   $\mu(f)+1$ and the contact determinacy is at most
   $\tau(f)+1$. For arbitrary characteristic it was shown in
   \cite{GK90} (among others) that $2\cdot \mu(f)$ respectively $2\cdot\tau(f)$ are
   bounds for the degree of right respectively 
   contact determinacy. We will improve these bounds in
   Section~\ref{sec:finitedeterminacy} substantially\lang{.}\kurz{ (see
     Theorem~\ref{thm:finitedeterminacybound} and
     Corollary~\ref{cor:finitedeterminacybound}).}  
   \lang{For the
     formulation of our result we introduce the \emph{order} $\ord(f)$ of a
     non-zero power series as the largest integer $k$ such that
     $f\in\m^k$, \lang{i.e.\ the smallest integer $k$ such that $f$ has non-zero
       terms of degree $k$,} and we set $\ord(0)=\infty$.
     $\ord(f)$ is invariant under right and contact equivalence.
     
     \begin{varthm-italic-break}[Theorem~\ref{thm:finitedeterminacybound}]
       Let $0\not=f\in\m^2\unideal \Kx$ and $k\in\N$.
       \begin{enumerate}
       \item If $\m^{k+2}\subseteq\m^2\cdot\jj(f)$, then $f$ is
         right $(2k-\ord(f)+2)$-determined. 
       \item If $\m^{k+2}\subseteq\m\cdot\langle f\rangle+\m^2\cdot\jj(f)$, then $f$ is
         contact $(2k-\ord(f)+2)$-determined. 
       \end{enumerate}
     \end{varthm-italic-break}
     
     \begin{varthm-italic-break}[Corollary~\ref{cor:finitedeterminacybound}]
       Let $0\not=f\in\m^2\unideal \Kx$.
       \begin{enumerate}
       \item If $\mu(f)<\infty$, then the right determinacy of $f$ is at
         most $2\mu(f)-\ord(f)+2$.
       \item If $\tau(f)<\infty$, then the contact determinacy of $f$ is
         at most $2\tau(f)-\ord(f)+2$. 
       \end{enumerate}       
     \end{varthm-italic-break}
   }

   The Milnor number of a singularity is governed by the geometry of
   its Newton diagram. In \cite{Kou76} Kouchnirenko introduced the
   \emph{Newton number} of a singularity which only depends on the
   Newton diagram, and he showed that this number is a lower bound
   for the Milnor number. Moreover, these two numbers
   coincide for non-degenerate and convenient singularities with fixed Newton
   diagram -- no matter what the characteristic of the base field 
   is. Unfortunately, his non-degeneracy assumption (\emph{Newton 
     non-degeneracy}, \NND) does not include all right semi-quasihomogeneous
   singularities (see page \pageref{page:rsqh}), which led Wall (see \cite{Wal99}) to the modified
   notion of \emph{inner Newton non-degeneracy} \SNND\ in characteristic
   zero.\footnote{Wall's notation is NPND$^*$, but we prefer the
     notation \emph{inner Newton non-degeneracy} since it is a
     condition only on the inner faces of the Newton diagram.}
   Neither of these notions fully implies the other, but there
   are well-understood relations, and Wall's notion allows to
   determine the Milnor number of a singularity from the geometry of
   the Newton diagram in the same way. In Section
   \ref{sec:nd} we  introduce the different notions of
   non-degeneracy and the Newton number $\mu_N(f)$, recall
   Kouchnirenko's result 
   and generalise Wall's result to positive characteristic\lang{.}
   \kurz{ (see Theorem~\ref{thm:snnd}).}
   \lang{

     \begin{varthm-italic-break}[Theorem~\ref{thm:kouchnirenko} \rm (Kouchnirenko, \cite{Kou76})]
       For $f\in\Kx$ we have $\mu_N(f)\leq \mu(f)$, and if $f$ is
       \NND\ and convenient then  $$\mu(f)=\mu_N(f)<\infty.$$
     \end{varthm-italic-break}
     
     \begin{varthm-italic-break}[Theorem~\ref{thm:snnd}
       \rm (Wall, \cite{Wal99})]
       If $f\in\Kx$ is \SNND\ w.r.t.\ some $C$-polytope, then
       $$\mu(f)=\mu_N(f)=\mu_N\big(\Gamma_-(f)\big)<\infty.$$
     \end{varthm-italic-break}

   }
   Moreover, we show that if the Newton diagram has only one
   facet then the right semi-quasihomogeneous singularities are
   precisely the inner Newton
   non-degenerate ones (see Proposition \ref{cor:snnd-rsqh}). 

   In the
   case of plane curve singularities we shall see that the condition of
   convenience can be dropped (in any
   characteristic) for the 
   equality of the Milnor and the Newton number (see Proposition
   \ref{prop:planarnnd}), and we show that
   Newton non-degeneracy implies inner Newton
   non-degeneracy (see Proposition \ref{prop:nnd-snnd}).

   Under the 
   assumption of \emph{weak Newton non-degeneracy}  \WNND, introduced
   by Beelen and Pellikaan, they 
   showed in \cite{BP00} how the delta invariant of a plane curve
   singularity can be computed in terms of the Newton
   diagram. We extend this to the non-convenient case and show that
   also in positive characteristic Newton non-degenerate singularities 
   satisfy Milnor's well-known formula
   $\mu(f)=2\cdot\delta(f)-r(f)+1$, relating the Milnor number and 
   the delta invariant\lang{.}\kurz{ (see Theorem~\ref{thm:milnorformula}).} 
   \lang{

     \begin{varthm-italic-break}[Theorem~\ref{thm:milnorformula}]
       If $f\in\Kx$ is \ND\ along each face of $\Gamma(f)$, then
       $\mu(f)=2\cdot\delta(f)-r(f)+1$. 
     \end{varthm-italic-break}

   }
   Using a result of Melle and Wall, this implies that $f$ has no
   \emph{wild} vanishing cycles in the sense of Deligne (see
   Corollary~\ref{cor:mellewall}). 

   \section{Finite determinacy}\label{sec:finitedeterminacy}

   In this section we prove that finite determinacy is
   equivalent to the isolatedness of the singularity. We start by
   showing that an isolated singularity is finitely determined and
   improve previously known determinacy bounds.

   \kurz{For the
     formulation of our result we introduce the \emph{order} $\ord(f)$ of a
     non-zero power series as the largest integer $k$ such that
     $f\in\m^k$, \lang{i.e.\ the smallest integer $k$ such that $f$ has non-zero
       terms of degree $k$,} and we set $\ord(0)=\infty$. 
     $\ord(f)$ is invariant under right and contact equivalence.
   }

   \begin{theorem}\label{thm:finitedeterminacybound}
     Let $0\not=f\in\m^2$ and $k\in\N$.
     \begin{enumerate}
     \item If $\m^{k+2}\subseteq\m^2\cdot\jj(f)$, then $f$ is
       right $(2k-\ord(f)+2)$-determined. 
     \item If $\m^{k+2}\subseteq\m\cdot\langle f\rangle+\m^2\cdot \jj(f)$, then $f$ is
       contact $(2k-\ord(f)+2)$-determined. 
     \end{enumerate}     
   \end{theorem}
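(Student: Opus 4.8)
The plan is to prove both statements simultaneously by the classical "complete the square / homotopy" method adapted to power series, working with the filtration by powers of $\m$. The key object is a one-parameter family interpolating between $f$ and an arbitrary perturbation $g$ with the same $(2k-\ord(f)+2)$-jet. Concretely, set $t=2k-\ord(f)+2$, let $g$ agree with $f$ modulo $\m^{t+1}$, so that $h:=g-f\in\m^{t+1}$, and consider the family $F_s=f+s\cdot h$ for $s\in[0,1]$ (formally, work in $\K[[\ux]][[s]]$ or over the interval via an algebraic homotopy). The strategy is to show that all members of this family are right- (respectively contact-) equivalent, by constructing at each $s$ an automorphism that absorbs the perturbation $h$ into $f$.

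First I would record the infinitesimal setup: to trivialize the family $F_s$ it suffices to solve, at the level of the tangent action, an equation of the form $h=\frac{\partial F_s}{\partial s}\in\jj(F_s)\cdot\m^2$ (for right equivalence) respectively $h\in\langle F_s\rangle\cdot\m+\jj(F_s)\cdot\m^2$ (for contact equivalence), since the vector field generating the coordinate change has components in $\m^2$ and the unit correction lies in $1+\m$. The $\m^2$-factors are exactly what force the determinacy bound to involve $2k$ rather than $k$, and they are the reason the hypotheses are stated with $\m^2\cdot\jj(f)$ rather than $\jj(f)$. The heart of the argument is therefore to upgrade the hypothesis $\m^{k+2}\subseteq\m^2\cdot\jj(f)$ (at $s=0$) to the analogous inclusion for every $F_s$, and then to verify that the perturbation $h$, which lives in a sufficiently high power of $\m$, actually lands in the required module.

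The key step is a Nakayama-type argument. Since $h\in\m^{t+1}$ with $t+1=2k-\ord(f)+3$, and since $\ord(F_s)=\ord(f)$ for the relevant $s$, I would show $\m^{k+2}\subseteq\m^2\cdot\jj(F_s)$ remains valid after perturbation: the difference $\jj(F_s)-\jj(f)$ is generated by derivatives of $s\cdot h$, which lie in $\m^t\subseteq\m^{k+2}\cdot\m^{k-\ord(f)+\dots}$—high enough that the perturbed Jacobian ideal still satisfies the same inclusion by Nakayama's lemma applied to the finitely generated module $\m^2\cdot\jj(F_s)/\m^2\cdot\jj(f)$ modulo $\m^{k+2}$. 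Granting the inclusion for all $s$, a degree count shows $h\in\m^{t+1}=\m^{2k-\ord(f)+3}\subseteq\m^2\cdot\m^{2k-\ord(f)+1}$, and one checks $\m^{2k-\ord(f)+1}\cdot\m\subseteq\m^{k+2}$ precisely when the order bookkeeping works out, so $h\in\m^2\cdot\m^{k}\cdot\jj(f)$-type membership follows, placing $\partial_s F_s$ in the module needed to integrate the vector field.

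The main obstacle will be the integration step: in positive characteristic one cannot solve the trivializing ODE by exponentiating a vector field as over $\C$. Instead I would replace the differential-equation argument by an \emph{iterative} (adic) construction—build the automorphism $\varphi$ as an infinite composition $\varphi=\lim_n\varphi_n$, where each $\varphi_n$ kills the perturbation modulo successively higher powers of $\m$, and argue convergence in the $\m$-adic topology on $\Aut(\Kx)$. The delicate point is to show that at each stage the residual error, after applying $\varphi_n$, still satisfies an inclusion of the form $\m^{k+2}\subseteq\m^2\cdot\jj(\text{corrected }f)$ so that the recursion can continue and strictly increases the order of the error by at least one each time. For the contact case the same scheme runs with the enlarged module $\m\cdot\langle F_s\rangle+\m^2\cdot\jj(F_s)$, the extra generator $F_s$ accounting for the unit $u$; the bookkeeping is identical once one tracks that multiplying by the unit contributes the $\m\cdot\langle f\rangle$ summand. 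Convergence of the $\m$-adic iteration, rather than any smoothness input, is what makes the argument characteristic-free.
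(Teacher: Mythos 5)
Your final architecture --- abandon the vector-field/ODE argument and instead build the equivalence as an $\m$-adic limit of corrections, each of which kills the error modulo one more power of $\m$ --- is exactly the paper's proof: the paper constructs units $v_p=1+b_{p,0}$ and automorphisms $\phi_p\colon x_i\mapsto x_i+b_{p,i}$ inductively and invokes a convergence lemma (Lemma~\ref{lem:convergence}) for the infinite composition. Your Nakayama-type argument for the stability of the hypothesis along the iteration is a workable substitute for what the paper uses implicitly (the hypothesis transforms under automorphisms and units), since the corrected series always differs from $f$ by an element of $\m^{N+1}$. The homotopy $F_s=f+s\cdot h$ is harmless window dressing: it plays no role once you switch to the iteration, and over an arbitrary field ``$s\in[0,1]$'' has no meaning anyway.

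The genuine gap is that you never verify the single claim the whole argument rests on, namely that one correction step strictly raises the order of the error, and you misidentify where the bound $2k-\ord(f)+2$ comes from. Concretely: with $N=2k-\ord(f)+2$ and $M=N-k$, the hypothesis in the weighted form $\m^{N+1}=\m^{M-1}\cdot\m^{k+2}\subseteq\m^{M}\cdot\langle f\rangle+\m^{M+1}\cdot\jj(f)$ lets you write $g-f=b_0\cdot f+\sum_{i=1}^n b_i\cdot f_{x_i}$ with $b_0\in\m^{M}$ and $b_i\in\m^{M+1}$; setting $v=1+b_0$ and $\phi\colon x_i\mapsto x_i+b_i$, you must then show $g-v\cdot\phi(f)\in\m^{N+2}$. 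This requires expanding $f(x_1+b_1,\ldots,x_n+b_n)=\sum_\alpha h_\alpha\cdot b_1^{\alpha_1}\cdots b_n^{\alpha_n}$ --- in arbitrary characteristic this must be done with integer binomial coefficients, not with $D^\alpha f/\alpha!$ --- and estimating the quadratic remainder via $\ord(h_\alpha)\geq\ord(f)-|\alpha|$, which gives, for $|\alpha|\geq 2$,
\begin{displaymath}
  \ord\big(h_\alpha\cdot b_1^{\alpha_1}\cdots b_n^{\alpha_n}\big)
  \;\geq\;\big(\ord(f)-|\alpha|\big)+(M+1)\cdot|\alpha|
  \;\geq\;\ord(f)+2M\;=\;N+2.
\end{displaymath}
This last inequality is precisely what dictates $N\geq 2k-\ord(f)+2$ and is where the theorem's bound is actually used. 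It is \emph{not} the $\m^2$-factors in the hypotheses that ``force the bound to involve $2k$'': in characteristic zero the very same hypotheses yield $(k+1)$-determinacy (Remark~\ref{rem:finitedeterminacybound}(a)); the loss from $k+1$ to $2k-\ord(f)+2$ comes from the quadratic remainder in the iteration, which replaces the ODE argument. Without this computation your assertion that the recursion ``strictly increases the order of the error by at least one each time'' is unsupported, and the one piece of bookkeeping you do offer ($h\in\m^2\cdot\m^{2k-\ord(f)+1}$, ``precisely when the order bookkeeping works out'') does not establish it.
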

   \begin{proof}
     We first consider the case of contact determinacy and set
     $o=\ord(f)$. It follows that $\ord(f_{x_i})\geq o-1$ for all
     $i=1,\ldots,n$ and by assumption we thus have
     \begin{displaymath}
       \m^{k+2}\subseteq\m\cdot\langle f\rangle+\m^2\cdot\langle
       f_{x_1},\ldots,f_{x_n}\rangle\subseteq\m^{o+1}. 
     \end{displaymath}
     This implies $k\geq o-1$.
     We set
     \begin{displaymath}
       N=2k-o+2\geq k+1,  
     \end{displaymath}
     and we consider $g\in\Kx$ such that
     $g-f\in\m^{N+1}$, i.e.\ $f$ and $g$ have the same $N$-jet. We
     have to show that $f$ and $g$ are contact equivalent, i.e.\ that
     there exists a unit $u\in\Kx^*$ and an automorphism
     $\varphi\in\Aut(\Kx)$ such that
     \begin{displaymath}
       g=u\cdot \varphi(f).
     \end{displaymath}
     We construct $u$ and $\varphi$ inductively, i.e.\ we
     construct inductively sequences of units $(u_p)_{p\geq 1}$ and
     of automorphisms $(\varphi_p)_{p\geq 1}$ such that
     $u_p\cdot\varphi_p(f)$ converges in the $\m$-adic topology to $u\cdot\varphi(f)$ for some
     unit $u\in\Kx^*$ and some automorphism $\varphi\in\Aut(\Kx)$ and
     at the same time
     \begin{displaymath}
       g-u_p\cdot \varphi_p(f)\in\m^{N+1+p},
     \end{displaymath}
     for all $p\geq 1$. The latter implies that the
     $u_p\cdot\varphi_p(f)$ converge to $g$ as well, and thus
     \begin{displaymath}
       g=u\cdot\varphi(f).
     \end{displaymath}

     Taking Lemma~\ref{lem:convergence} into account and using its
     terminology with $M=N-k\geq 1$ it suffices to construct certain
     series $b_{p,0}\in\m^{M+p-1}$ and $b_{p,i}\in\m^{M+p}$ for
     $i=1,\ldots,n$ and $p\geq 1$. 
     For this we note that by assumption
     \begin{displaymath}
       g-f\in\m^{N+1}=\m^{M-1}\cdot \m^{k+2}\subseteq \m^M\cdot\langle
       f\rangle+\m^{M+1}\cdot\jj(f). 
     \end{displaymath}
     Thus there are series $b_{1,0}\in\m^M$ and $b_{1,i}\in\m^{M+1}$
     for $i=1,\ldots,n$ such that
     \begin{equation}\label{eq:finitedeterminacybound:1}
       g-f=b_{1,0}\cdot f+\sum_{i=1}^n b_{1,i}\cdot f_{x_i}.
     \end{equation}
     As in Lemma~\ref{lem:convergence} we define $v_1=1+b_{1,0}\in\Kx^*$ and
     \begin{displaymath}
       \phi_1:\Kx\longrightarrow\Kx:x_i\mapsto x_i+b_{1,i}.
     \end{displaymath}
     We now show that
     \begin{displaymath}
       g-v_1\cdot \phi_1(f)\in\m^{N+2},
     \end{displaymath}
     since then we can replace $f$ in the above argument by
     $v_1\cdot\phi_1(f)$ and go on inductively.

     Note first that
     \begin{displaymath}
       (x_1+z_1)^{\beta_1}\cdot\ldots\cdot(x_n+z_n)^{\beta_n}
       =\sum_{\gamma_1=0}^{\beta_1}\ldots\sum_{\gamma_n=0}^{\beta_n}
       c_{\beta,\gamma}\cdot \ux^{\beta-\gamma}\cdot\uz^\gamma
     \end{displaymath}
     for
     $c_{\beta,\gamma}=\binom{\beta_1}{\gamma_1}\cdot\ldots\cdot\binom{\beta_n}{\gamma_n}\in\Z$. 
     For $f=\sum_{|\beta|\geq o}a_\beta\cdot \ux^\beta$ we thus have
     \begin{equation}
       \label{eq:taylorersatz}
       \begin{aligned}
         f(x_1+z_1,\ldots,x_n+z_n)=&
         \sum_{|\beta|\geq o}a_\beta\cdot
         (x_1+z_1)^{\beta_1}\cdot\ldots\cdot(x_n+z_n)^{\beta_n}\\
         =&\sum_{|\beta|\geq o}a_\beta\cdot
         \sum_{\gamma_1=0}^{\beta_1}\ldots\sum_{\gamma_n=0}^{\beta_n}
         c_{\beta,\gamma}\cdot \ux^{\beta-\gamma}\cdot\uz^\gamma
         =\sum_{\alpha\in\N^n} h_\alpha\cdot \uz^\alpha         
       \end{aligned}
     \end{equation}
     where
     \begin{displaymath}
       h_\alpha=\sum_{|\beta|\geq o,\beta\geq\alpha} a_\beta\cdot
       c_{\beta,\alpha}\cdot \ux^{\beta-\alpha}
     \end{displaymath}
     if we define $\beta\geq\alpha$ by $\beta_i\geq\alpha_i$ for all
     $i=1,\ldots,n$. It follows that
     \begin{displaymath}
       \ord(h_\alpha)=\min\big\{|\beta|-|\alpha|\;\big|\;|\beta|\geq
       o, \beta\geq\alpha\big\}\geq o-|\alpha|.
     \end{displaymath}
     We should like to point out that
     \begin{math}
       h_\alpha=\frac{D^\alpha f(\ux)}{\alpha_1!\cdot\ldots\cdot\alpha_n!}
     \end{math}
     whenever $\alpha_i<\Char(\K)$ for all $i=1,\ldots,n$. In
     particular the constant term $h_0=f$ and for the unit
     vectors $e_i$ we get $h_{e_i}=f_{x_i}$. 
     Applying $\phi_1$ to $f$ amounts to substituting $z_i$ by
     $b_{1,i}$ in \eqref{eq:taylorersatz} and we thus find
     \begin{displaymath}
       \phi_1(f)=f+\sum_{i=1}^n f_{x_i}\cdot b_{1,i}+ h
     \end{displaymath}
     where
     \begin{displaymath}
       h=\sum_{|\alpha|\geq 2} h_\alpha\cdot
       b_{1,1}^{\alpha_1}\cdots b_{1,n}^{\alpha_n}\in \m^{N+2},
     \end{displaymath}
     since
     \begin{align*}
       \ord\big(h_\alpha\cdot b_{1,1}^{\alpha_1}\cdots
       b_{1,n}^{\alpha_n}\big)\geq &\;
       \ord(h_\alpha)+\sum_{i=1}^n
       \ord(b_{1,i})\cdot\alpha_i\\
       \geq&\; o-|\alpha|+(M+1)\cdot |\alpha|
       \geq\; o+2\cdot M
       =N+2.
     \end{align*}
     Multiplying $\phi_1(f)$ by $v_1=1+b_{1,0}$ and using
     \eqref{eq:finitedeterminacybound:1} we get
     \begin{equation}\label{eq:finitedeterminacybound:2}
       \begin{aligned}
         g-v_1\cdot\phi_1(f)=&
         g-(1+b_{1,0})\cdot \left(f+\sum_{i=1}^n f_{x_i}\cdot
           b_{1,i}+h\right)\\
         =& -\sum_{i=1}^n b_{1,0}\cdot b_{1,i}\cdot f_{x_i}-
         (1+b_{1,0})\cdot h \in\m^{N+2},
       \end{aligned}
     \end{equation}
     since
     \begin{displaymath}
       \ord(b_{1,0}\cdot b_{1,i}\cdot f_{x_i})\geq M+(M+1)+(o-1)=N+2.
     \end{displaymath}
     We thus can proceed inductively to construct sequences
     $(b_{p,i})_{p\geq 1}$ for $i=0,\ldots,n$ with
     $b_{p,0}\in\m^{M+p-1}$ and $b_{p,i}\in\m^{M+p}$ for
     $i=1,\ldots,n$.  The generalisation of 
     \eqref{eq:finitedeterminacybound:2} holds by induction and with
     the notation of Lemma~\ref{lem:convergence} it reads as
     \begin{displaymath}
       g-u_p\cdot \varphi_p(f)\in\m^{N+1+p}
     \end{displaymath}
     as required. This finishes the proof for the contact equivalence.
     
     The proof for right equivalence works along the same lines. With
     the notation from above  the condition
     \begin{displaymath}
       \m^{k+2}\subseteq \m^2\cdot \jj(f)\subseteq \m^{o+1}
     \end{displaymath}
     implies that still $k\geq o-1$ and that for any $g$ with
     \begin{displaymath}
       g-f\in\m^{N+1}=\m^{M-1}\cdot \m^{k+2}\subseteq \m^{M+1}\cdot\jj(f)
     \end{displaymath}
     where $N\lang{=2k-\ord(f)+2}=2k-o+2\geq k+1$ and $M=N-k\geq 1$,
     there are
     $b_{1,i}\in\m^{M+1}$ with
     \begin{displaymath}
       g-f=b_{1,1}\cdot f_{x_1}+\ldots+b_{1,n}\cdot f_{x_n}.
     \end{displaymath}
     We can then define $\phi_1$ as above and see that
     \begin{displaymath}
       g-\phi_1(f)=h\in\m^{N+2}.
     \end{displaymath}
     Going on by induction and applying Lemma~\ref{lem:convergence} we
     get an automorphism $\varphi\in\Aut(\Kx)$ such that
     $g=\varphi(f)$.      
   \end{proof}
   
   \begin{lemma}\label{lem:convergence}
     Let $M\geq 1$ be an integer and let $b_{p,0}\in\m^{M+p-1}$ and $b_{p,i}\in\m^{M+p}$
     for $i=1,\ldots,n$ and $p\geq 1$. 
     Consider the units $v_p=1+b_{p,0}\in\Kx^*$ and the automorphisms
     $\phi_p\in\Aut(\Kx)$ given by
     \begin{displaymath}
       \phi_p:x_i\mapsto x_i+b_{p,i} \;\;\;\mbox{ for }\; i=1,\ldots,n.
     \end{displaymath}
     We denote by
     \begin{displaymath}
       \varphi_p=\phi_p\circ\phi_{p-1}\circ\ldots\circ\phi_{1}\in\Aut(\Kx)
     \end{displaymath}
     the composition of the first $p$ automorphisms, 
     and we define inductively
     \begin{displaymath}
       u_p=v_p\cdot\phi_p(u_{p-1}),
     \end{displaymath}
     where $u_0=1$.

     Then the following hold true:
     \begin{enumerate}
     \item The sequences $\big(\varphi_p(x_i)\big)_{p\geq 1}$ converge in
       the $\m$-adic topology of $\Kx$ to  power series $x_i+b_i$ with
       $b_i\in\m^{M+1}$ for $i=1,\ldots,n$. In particular, the map
       \begin{displaymath}
         \varphi:\Kx\longrightarrow\Kx:x_i\mapsto x_i+b_i
       \end{displaymath}
       is a local $\K$-algebra automorphism of $\Kx$.
     \item The sequence
       $(u_p)_{p\geq 1}$ converges in the $\m$-adic topology to a
       unit $u=1+b_0\in\Kx^*$ with $b_0\in\m^M$.
     \item For any power series $f_0\in\Kx$ the sequence
       $\big(\varphi_p(f_0)\big)_{p\geq 1}$ converges in the $\m$-adic topology
       to $\varphi(f_0)$.
     \item For any power series $f_0\in\Kx$ the sequence
       $\big(u_p\cdot\varphi_p(f_0)\big)_{p\geq 1}$ converges in the $\m$-adic topology
       to $u\cdot\varphi(f_0)$.
     \end{enumerate}
   \end{lemma}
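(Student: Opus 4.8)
The plan is to prove the four assertions in order, the crucial input being a single uniform estimate for how far one substitution $\phi_p$ moves a given power series. Writing $b_p=(b_{p,1},\dots,b_{p,n})$ and specialising $\uz$ to $b_p$ in the expansion \eqref{eq:taylorersatz}, one obtains for every $g\in\Kx$
\begin{displaymath}
  \phi_p(g)-g=\sum_{|\alpha|\geq 1}h_\alpha^{(g)}\cdot b_{p,1}^{\alpha_1}\cdots b_{p,n}^{\alpha_n},
\end{displaymath}
where $h_0^{(g)}=g$ and $\ord\big(h_\alpha^{(g)}\big)\geq\ord(g)-|\alpha|$. Since each $b_{p,i}\in\m^{M+p}$, every summand has order at least $\ord(g)-|\alpha|+|\alpha|\cdot(M+p)\geq\ord(g)+M+p-1$, so that
\begin{displaymath}
  \phi_p(g)-g\in\m^{\ord(g)+M+p-1}.
\end{displaymath}
This inequality drives the whole argument, and establishing it cleanly (in particular the order bound on the $h_\alpha^{(g)}$) is the only genuinely computational point.

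For (a) I would use the recursion $\varphi_p=\phi_p\circ\varphi_{p-1}$ (with $\varphi_0=\mathrm{id}$) to write $\varphi_p(x_i)-\varphi_{p-1}(x_i)=\phi_p\big(\varphi_{p-1}(x_i)\big)-\varphi_{p-1}(x_i)$. Inductively $\varphi_{p-1}(x_i)=x_i+(\text{terms in }\m^{M+1})$ has order $1$, so the displayed estimate with $g=\varphi_{p-1}(x_i)$ gives $\varphi_p(x_i)-\varphi_{p-1}(x_i)\in\m^{M+p}$. Hence the sequence is Cauchy and, by completeness of $\Kx$, converges to some $x_i+b_i$; since every increment lies in $\m^{M+1}$, so does $b_i$. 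As the linear part of $\varphi\colon x_i\mapsto x_i+b_i$ with $b_i\in\m^2$ is the identity, $\varphi$ is a local $\K$-algebra automorphism. Assertion (b) is proved the same way: from $u_p-u_{p-1}=(v_p-1)\cdot\phi_p(u_{p-1})+\big(\phi_p(u_{p-1})-u_{p-1}\big)$, the first summand lies in $\m^{M+p-1}$ because $v_p-1=b_{p,0}\in\m^{M+p-1}$ and $\phi_p(u_{p-1})$ is a unit, while the second lies in $\m^{M+p-1}$ by the displayed estimate applied to the unit $g=u_{p-1}$ of order $0$. Thus $(u_p)$ is Cauchy with limit $u$, and since $u_1-1=b_{1,0}$ and every increment lie in $\m^M$ we get $u=1+b_0$ with $b_0\in\m^M$, in particular a unit.

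For (c) I would first pass to the limit in the telescoping sum of part (a) to get $\varphi(x_i)-\varphi_p(x_i)\in\m^{M+p+1}$. Fixing a target order $N_0$ and splitting $f_0$ into its truncation in degrees $\leq N_0$ plus a remainder in $\m^{N_0+1}$, the remainder is sent into $\m^{N_0}$ by the filtration-preserving maps $\varphi_p$ and $\varphi$, while on each of the finitely many monomials $\ux^\alpha$ of the truncation a telescoping estimate for products yields $\varphi_p(\ux^\alpha)-\varphi(\ux^\alpha)\in\m^{|\alpha|+M+p}$, which lies in $\m^{N_0}$ once $p$ is large. The ultrametric triangle inequality then gives $\varphi_p(f_0)-\varphi(f_0)\in\m^{N_0}$ for all large $p$, i.e.\ the claimed convergence. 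Finally (d) follows from (b), (c) and continuity of multiplication: writing $u_p\cdot\varphi_p(f_0)-u\cdot\varphi(f_0)=(u_p-u)\cdot\varphi_p(f_0)+u\cdot\big(\varphi_p(f_0)-\varphi(f_0)\big)$ and using $\ord\big(\varphi_p(f_0)\big)\geq\ord(f_0)$ together with $\ord(u)=0$, both summands have order tending to infinity. The main obstacle is purely bookkeeping, namely keeping the order estimates uniform in $p$ across the composition $\varphi_p=\phi_p\circ\varphi_{p-1}$, for which the displayed inequality above is the essential tool.
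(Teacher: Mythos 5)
Your proposal is correct and follows essentially the same route as the paper's proof: the same recursions $\varphi_p=\phi_p\circ\varphi_{p-1}$ and $u_p=v_p\cdot\phi_p(u_{p-1})$, the same telescoping Cauchy estimates for (a) and (b), and the same splitting $u_p\cdot\varphi_p(f_0)-u\cdot\varphi(f_0)=(u_p-u)\cdot\varphi_p(f_0)+u\cdot\big(\varphi_p(f_0)-\varphi(f_0)\big)$ for (d). The only difference is that you make explicit two points the paper treats as evident, namely the uniform order estimate $\phi_p(g)-g\in\m^{\ord(g)+M+p-1}$ derived from the expansion \eqref{eq:taylorersatz}, and the truncation-plus-telescoping argument showing that closeness of $\varphi_p$ and $\varphi$ on the generators $x_i$ implies closeness on every $f_0$.
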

   \begin{proof}
     Since $b_{p,i}\in\m^{M+p}$ for $i=1,\ldots,n$ we have by construction that
     \begin{displaymath}
       \varphi_p(x_i)-\varphi_{p-1}(x_i)=
       \phi_p\big(\varphi_{p-1}(x_i)\big)-\varphi_{p-1}(x_i)\in \m^{M+p},
     \end{displaymath}
     and thus for any $N\geq 1$ there is a $P=\max\{N-M,1\}\geq 1$ such that for all
     $p>q> P$
     \begin{displaymath}
       \varphi_p(x_i)-\varphi_q(x_i)=\sum_{j=q+1}^p
       \varphi_j(x_i)-\varphi_{j-1}(x_i)
       \in \m^{M+P}\subseteq\m^N.
     \end{displaymath}
     This shows that the $\varphi_p(x_i)$ converge to a power series of
     the form $x_i+b_i$ with $b_i\in\m^{M+1}$. \lang{Note
       that $b_i\equiv b_{i,1}\;(\mod \m^{M+1})$.} 
     Similarly we have that
     \begin{displaymath}
       \phi_p(u_{p-1})-u_{p-1}\in \m^{M+p},
     \end{displaymath}
     and since $b_{p,0}\in\m^{M+p-1}$ thus also
     \begin{displaymath}
       u_p-u_{p-1}=(1+b_{p,0})\cdot \phi_p(u_{p-1})-u_{p-1}\in \m^{M+p-1}.
     \end{displaymath}
     With basically the same argument as above we see that $u_p$ converges in the
     $\m$-adic topology to a
     power series of the form $1+b_0$ with $b_0\in\m^M$. \lang{Note
       that $b_0\equiv b_{0,1}\;(\mod \m^M)$.}

     Let now $f_0\in\Kx$ be any power series and let $N\in\N$ be given. 
     Since the $\varphi_p(x_i)$ converge to
     $\varphi(x_i)$ and the $u_p$ converge to $u$, there is a $P\geq
     1$ such that for all $p\geq P$ and $i=1,\ldots,n$
     \begin{displaymath}
       \varphi(x_i)-\varphi_p(x_i)\in\m^N
       \;\;\;\mbox{ as well as }\;\;\;
       u-u_p\in\m^N.
     \end{displaymath}
     It follows that also
     \begin{displaymath}
       \varphi(f_0)-\varphi_p(f_0)\in\m^N, \kurz{ \text{ and } }
     \end{displaymath}
     \lang{and}
     \begin{displaymath}
       u\cdot \varphi(f_0)-u_p\cdot\varphi_p(f_0)
       =u\cdot \big(\varphi(f_0)-\varphi_p(f_0)\big)
       +(u-u_p)\cdot\varphi_p(f_0)\in\m^N
     \end{displaymath}
     for all $p\geq P$. Thus the $\varphi_p(f_0)$ converge to
     $\varphi(f_0)$ and the $u_p\cdot \varphi_p(f_0)$ converge to
     $u\cdot\varphi(f_0)$. 
   \end{proof}

   \begin{remark}\label{rem:finitedeterminacybound}
     \begin{enumerate}
     \item      If the base field $\K$ has characteristic zero it is known that
       \bmath
         \m^{k+2}\subseteq \m^2\cdot \jj(f)
       \emath
       implies right-$(k+1)$-determinacy of $f$, and that
       \bmath
         \m^{k+2}\subseteq \m\cdot\langle f\rangle+\m^2\cdot\jj(f)
       \emath
       implies contact-$(k+1)$-determinacy of $f$ -- see e.g.\ 
       \cite[Thm.~2.23]{GLS07} for $\K=\C$ and \cite[Thm.~3.1.13]{Bou09} for the
       general case. \lang{In particular, the 
       assumptions of Theorem~\ref{thm:finitedeterminacybound} would in
       each case imply $k+1$ as determinacy bound, which in general is
       strictly better than 
       $2k-\ord(f)+2\geq k+1$. }
     \item In positive characteristic the bounds in (a) do
       not hold any longer. Consider the power series
       $f=y^2+x^3y\in\K[x,y]$, $\Char(\K)=2$. Then
       $\tj(f)=\langle y^2,x^2y,x^3\rangle$ and thus $\tau(f)=5$. In
       particular, $f$ defines an isolated hypersurface singularity
       $R_f$. Moreover, we have
       \begin{displaymath}
         \langle f\rangle+\m\cdot\jj(f)=\langle y^2,x^3y,x^4\rangle\supset\m^4,
       \end{displaymath}
       and if (a) would hold then $f$ would be
       $5$-determined. However, $f$ is reducible while
       $f+x^5$ is irreducible as can be checked by the procedure
       \texttt{is\_irred} in \textsc{Singular} \cite{DGPS10}. Therefore, $R_f$ and
       $R_{f+x^5}$ cannot be 
       isomorphic, i.e.~ $f\not\sim_c f+x^5$, and $f$ is not
       contact $5$-determined. 
       Theorem~\ref{thm:finitedeterminacybound} asserts that $f$ is
       actually 
       $6$-determined, i.e.\ our result is sharp in this example.
     \item The determinacy bounds given in Theorem~\ref{thm:finitedeterminacybound} are
       always at least as good as the previously known bounds $2\cdot\mu(f)$
       respectively $2\cdot\tau(f)$ for arbitrary characteristic, and
       they are in general much better (see e.g.\ the example in Part (b)).
       \kurz{This follows from $\m^{\mu(f)}\subseteq\jj(f)$ if
         $\mu(f)<\infty$ and $\m^{\tau(f)}\subseteq\tj(f)$ if
         $\tau(f)<\infty$.}
       \lang{This follows from the following two facts, which are easy
       consequences of Nakayama's Lemma:
       \begin{itemize}
       \item If $\mu(f)<\infty$, then $\m^{\mu(f)}\subseteq\jj(f)$.
       \item If $\tau(f)<\infty$, then $\m^{\tau(f)}\subseteq\tj(f)$.
       \end{itemize}
       To see this it suffices to show that for any ideal $I\unideal\Kx$ with
       $d=\dim_{\K}(\Kx/I)<\infty$ we have $\m^d\subseteq I$. In the
       ring $\Kx/I$ by Nakayama's Lemma the descending sequence of
       powers $\overline{\m}^k$ of the maximal ideal has to be
       strictly descending until it finally becomes zero. Thus in each
       step the codimension of $\overline{\m}^k$ grows at least by
       one, and it can do so at most $d$ times. Thus
       $\overline{m}^d=0$ or equivalently $\m^d\subseteq I$.}
     \item In concrete examples the integers $k$ in
       Theorem~\ref{thm:finitedeterminacybound} can be computed in 
       \textsc{Singular} with the aid of the procedure
       \texttt{highcorner}. If we apply \texttt{highcorner} to a
       standard basis of the ideal $\m^2\cdot\jj(f)$ resp.\ $\m\cdot\langle
       f\rangle+\m^2\cdot\jj(f)$ with respect to some local degree
       ordering the result will be a monomial 
       $\ux^\alpha$, and then $k=\deg(\ux^\alpha)-1$. E.g.\ for
       $f=y^8+x^8y^4+x^{23}$ and $\Char(\K)=23$ the following
       \textsc{Singular} computation shows that
       $k=\deg(x^{22}y^2)-1=23$ and $f$ is at least contact $40$-determined.
\begin{verbatim}
> ring r=23,(x,y),ds;
> poly f=y8+x8y4+x23;
> ideal I=maxideal(1)*f+maxideal(2)*jacob(f);
> I=std(I);
> highcorner(I);
x22y2
\end{verbatim}

     \end{enumerate}
   \end{remark}

   \begin{corollary}\label{cor:finitedeterminacybound}
     Let $0\not=f\in\m^2\unideal \Kx$.
     \begin{enumerate}
     \item If $\mu(f)<\infty$, then the right determinacy of $f$ is at
       most $2\mu(f)-\ord(f)+2$.
     \item If $\tau(f)<\infty$, then the contact determinacy of $f$ is
       at most $2\tau(f)-\ord(f)+2$. 
     \end{enumerate}            
   \end{corollary}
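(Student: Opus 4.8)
The plan is to obtain the Corollary as an immediate consequence of Theorem~\ref{thm:finitedeterminacybound}, by choosing the integer $k$ in that theorem to be $\mu(f)$ in part~(a) and $\tau(f)$ in part~(b), and then checking that the corresponding hypotheses on powers of $\m$ are satisfied. The single external ingredient I need is the Nakayama-type inclusion already recorded in Remark~\ref{rem:finitedeterminacybound}(c): whenever an ideal $I\subseteq\Kx$ has finite colength $d=\dim_\K(\Kx/I)$, one has $\m^d\subseteq I$. Applied to $I=\jj(f)$ and $I=\tj(f)$ this gives $\m^{\mu(f)}\subseteq\jj(f)$ when $\mu(f)<\infty$, and $\m^{\tau(f)}\subseteq\tj(f)$ when $\tau(f)<\infty$.

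For part~(a) I set $k=\mu(f)$. Multiplying $\m^{\mu(f)}\subseteq\jj(f)$ by $\m^2$ yields
\begin{displaymath}
  \m^{k+2}=\m^{\mu(f)+2}=\m^2\cdot\m^{\mu(f)}\subseteq\m^2\cdot\jj(f),
\end{displaymath}
which is exactly the hypothesis of Theorem~\ref{thm:finitedeterminacybound}(a). Hence $f$ is right $(2k-\ord(f)+2)$-determined, and since $2k-\ord(f)+2=2\mu(f)-\ord(f)+2$ this is the asserted bound on the right determinacy.

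For part~(b) I set $k=\tau(f)$ and use that $\tj(f)=\langle f\rangle+\jj(f)$. Multiplying $\m^{\tau(f)}\subseteq\tj(f)$ by $\m^2$ and distributing gives
\begin{displaymath}
  \m^{k+2}=\m^2\cdot\m^{\tau(f)}\subseteq\m^2\cdot\tj(f)=\m^2\cdot\langle f\rangle+\m^2\cdot\jj(f)\subseteq\m\cdot\langle f\rangle+\m^2\cdot\jj(f),
\end{displaymath}
where the final inclusion merely weakens $\m^2$ to $\m$ in front of $\langle f\rangle$. This is precisely the hypothesis of Theorem~\ref{thm:finitedeterminacybound}(b), so $f$ is contact $(2k-\ord(f)+2)=(2\tau(f)-\ord(f)+2)$-determined, as claimed.

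I do not expect any genuine obstacle here: all of the inductive and convergence work is already carried out in Theorem~\ref{thm:finitedeterminacybound} and Lemma~\ref{lem:convergence}, so the Corollary is essentially a bookkeeping step. The only point requiring a moment's care is the passage from finite colength to a concrete power of $\m$ lying inside the ideal, but this is exactly the elementary Nakayama argument spelled out in Remark~\ref{rem:finitedeterminacybound}(c), and it is what legitimises the choices $k=\mu(f)$ respectively $k=\tau(f)$.
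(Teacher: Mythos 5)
Your proof is correct and follows exactly the paper's own route: the paper likewise deduces the Corollary from Theorem~\ref{thm:finitedeterminacybound} applied with $k=\mu(f)$ respectively $k=\tau(f)$, using the Nakayama-type inclusions $\m^{\mu(f)}\subseteq\jj(f)$ and $\m^{\tau(f)}\subseteq\tj(f)$ recorded in Remark~\ref{rem:finitedeterminacybound}(c). Your write-up merely spells out the bookkeeping that the paper leaves implicit.
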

   \begin{proof}
     This follows from Remark~\ref{rem:finitedeterminacybound}
     (c) and Theorem~\ref{thm:finitedeterminacybound}.
   \end{proof}

   The assumptions in Theorem~\ref{thm:finitedeterminacybound} are
   fulfilled if $f$ respectively $R_f$ is an isolated singularity, and
   thus these are finitely 
   determined. In the complex setting it is well known that the
   converse holds as well (see e.g.\ \cite[Cor.~2.39]{GLS07}),
   and the same is true in arbitrary characteristic.

   \begin{theorem}\label{thm:isolated}
     Let $0\not=f\in\m\unideal\Kx$ be a power series.
     \begin{enumerate}
     \item If $f$ is right $k$-determined, then
       $\m^{k+1}\subseteq\m\cdot\jj(f)$. In particular, $f$ is an
       isolated singularity.
     \item If $f$ is contact $k$-determined, then
       $\m^{k+1}\subseteq\langle f\rangle+\m\cdot\jj(f)$. In
       particular, $R_f$ is an isolated hypersurface singularity.
     \end{enumerate}
   \end{theorem}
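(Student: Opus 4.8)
The plan is to prove both parts by recognising the ideal $\m\cdot\jj(f)$ (respectively $\langle f\rangle+\m\cdot\jj(f)$) as the \emph{tangent space to the orbit} of $f$ under the right group $\mathcal{R}$ (respectively the contact group $\mathcal{K}$), and then showing that finite determinacy forces this tangent space to contain all of $\m^{k+1}$. I would treat the contact case (b) in detail, part (a) being the specialisation obtained by dropping the unit. So fix an arbitrary $h\in\m^{k+1}$; the goal is to find $a\in\Kx$ and $b_1,\dots,b_n\in\m$ with $h=a\cdot f+\sum_{i=1}^n b_i\cdot f_{x_i}$, since this exactly says $h\in\langle f\rangle+\m\cdot\jj(f)$. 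The natural starting point is the first-order expansion already appearing in the proof of Theorem~\ref{thm:finitedeterminacybound}: for a unit $u=1+a$ and an automorphism $\varphi\colon x_i\mapsto x_i+b_i$ one has $u\cdot\varphi(f)=f+a\cdot f+\sum_i b_i\cdot f_{x_i}+h'$, where $h'$ collects all terms of order $\geq 2$ in $a$ and the $b_i$. Reading off the linear contributions, with $a$ ranging over $\Kx$ and the $b_i$ over $\m$ (as coordinate changes satisfy $\varphi(x_i)\in\m$), identifies the orbit's tangent directions as $\langle f\rangle+\m\cdot\jj(f)$.

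To exploit $k$-determinacy I would introduce the one-parameter family $f+t\cdot h$. For every value $t$ the series $f+t\cdot h$ has the same $k$-jet as $f$, so contact $k$-determinacy gives $u_t\in\Kx^*$ and $\varphi_t\in\Aut(\Kx)$ with $f+t\cdot h=u_t\cdot\varphi_t(f)$. Heuristically, reducing this identity to first order in $t$ and comparing with the expansion above yields $h=a\cdot f+\sum_i b_i\cdot f_{x_i}$ with the $b_i\in\m$, which is precisely the membership sought. Part (a) is identical with $u_t\equiv 1$, producing $h=\sum_i b_i\cdot f_{x_i}\in\m\cdot\jj(f)$. Once this is established the two ``in particular'' statements are immediate: since $\m\cdot\jj(f)\subseteq\jj(f)$ and $\langle f\rangle+\m\cdot\jj(f)\subseteq\tj(f)$, we obtain $\m^{k+1}\subseteq\jj(f)$ respectively $\m^{k+1}\subseteq\tj(f)$, hence $\mu(f)<\infty$ respectively $\tau(f)<\infty$.

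The main obstacle is legitimising the step ``reduce $f+t\cdot h=u_t\cdot\varphi_t(f)$ to first order in $t$''. Over a field of characteristic zero this is the infinitesimal (vector-field) computation that makes the orbit map smooth, so that the orbit has tangent space exactly $\langle f\rangle+\m\cdot\jj(f)$ and one may simply differentiate the family at $t=0$. In positive characteristic this fails as stated: the orbit map need not be separable, and the tangent space to the orbit can be strictly larger than $\langle f\rangle+\m\cdot\jj(f)$. Already for $f=x_1^p$ one has $\jj(f)=0$, yet $\varphi(f)=\varphi(x_1)^p$ forces the orbit of $f$ into a proper linear subspace whose tangent directions have nothing to do with the (zero) Jacobian ideal. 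Consequently one cannot differentiate the family, and a dimension count on jet-group orbits is also insufficient to pin down the sharp ideal $\m\cdot\jj(f)$.

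To circumvent this I would avoid differentiation entirely and argue by $\m$-adic successive approximation, running the machinery of Lemma~\ref{lem:convergence} and Theorem~\ref{thm:finitedeterminacybound} in reverse. Since $\Kx$ is a complete local ring, the ideal $\langle f\rangle+\m\cdot\jj(f)$ is closed in the $\m$-adic topology, so it suffices to show $h\in\langle f\rangle+\m\cdot\jj(f)+\m^{N}$ for every $N$. One feeds carefully chosen perturbations into the $k$-determinacy hypothesis and, using the explicit Taylor expansion \eqref{eq:taylorersatz}, reads off at each stage --- modulo one higher power of $\m$ --- a correction $a\cdot f+\sum_i b_i\cdot f_{x_i}$ that removes the lowest-order part of $h$ not yet accounted for, finally passing to the limit by completeness. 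The delicate point, and precisely where positive characteristic must be handled with care, is to keep the constructed coordinate changes $b_i\in\m$ and the unit under control so that the quadratic remainder $h'$ never reintroduces terms outside the tangent ideal; this is what guarantees that the limit lands in $\m\cdot\jj(f)$ rather than in the a priori larger tangent space to the orbit.
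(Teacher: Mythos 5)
Your reduction of the statement to showing $h\in\langle f\rangle+\m\cdot\jj(f)$ for each $h\in\m^{k+1}$, the use of the family $f+t\cdot h$ (which shares the $k$-jet of $f$ for every $t$), and the final closedness/Nakayama step are all fine, and your diagnosis of the characteristic-$p$ obstruction is accurate. But what you offer in place of differentiation is not a proof: the entire content of the theorem is concentrated in the sentence about feeding ``carefully chosen perturbations'' into the determinacy hypothesis and reading off corrections, and you never specify the perturbations, nor how a congruence $h\equiv a\cdot f+\sum_i b_i\cdot f_{x_i}\ (\mathrm{mod}\ \m^N)$ is to be extracted from the identities $f+t\cdot h=u_t\cdot\varphi_t(f)$, nor why the higher-order remainder stays inside the ideal. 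That last point is not a technicality but the whole difficulty: in the expansion $u\cdot\varphi(f)=f+a\cdot f+\sum_i b_i\cdot f_{x_i}+h'$ the remainder $h'$ is built from the divided-power derivatives $h_\alpha$, $|\alpha|\geq 2$, of \eqref{eq:taylorersatz}, and these do \emph{not} in general lie in $\langle f\rangle+\m\cdot\jj(f)$ --- your own example $f=x_1^p$, where $\varphi(f)-f$ is pure remainder and can be nonzero although $\m\cdot\jj(f)=0$, shows precisely this. Asserting that the remainder ``never reintroduces terms outside the tangent ideal'' is therefore assuming exactly what has to be proved. A second unaddressed point: the pairs $(u_t,\varphi_t)$ are only known to exist separately for each $t$; nothing makes $t\mapsto(u_t,\varphi_t)$ algebraic or even continuous, so a ``first order in $t$'' comparison would be illegitimate even in characteristic zero without further argument.

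The paper closes this gap by algebraizing the problem rather than by approximation. It passes to the finite-dimensional jet space $J_{k+1}=\Kx/\m^{k+2}$, on which $\mathcal{R}_{k+1}$ and $\mathcal{K}_{k+1}$ act as affine algebraic groups; it proves that this action is \emph{separable} (the extension $\K(\mathcal{K}_l\times J_l)/\K(J_l)$ induced by the action is purely transcendental, because the action can be inverted by rational maps, solving for the coefficients degree by degree via Cramer's rule); and from separability, surjectivity of the differential at one point, and homogeneity of the orbit it computes, with dual numbers, that the orbit of $\jet_{k+1}(f)$ is smooth with tangent space exactly $\big(\langle f\rangle+\m\cdot\jj(f)+\m^{k+2}\big)/\m^{k+2}$. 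Determinacy then says that the affine line $\big\{\jet_{k+1}(f)+t\cdot\jet_{k+1}(g)\;:\;t\in\K\big\}$ lies inside this orbit, so its direction $\jet_{k+1}(g)$ lies in the tangent space --- no differentiation of any family $(u_t,\varphi_t)$ is ever performed --- and Nakayama removes the $\m^{k+2}$. In short, the tangent-space route you rejected is the one the paper actually makes rigorous, and the separability of the jet-group action is the essential global input for which your sketch has no substitute. (Your $x_1^p$ example does show that the orbit-tangent-space formula cannot hold unconditionally, so separability is genuinely needed at that point of the paper's Proposition; but noticing this does not repair your own argument, whose decisive step remains unproved.)
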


   In the proof of Theorem~\ref{thm:isolated} we will restrict
   ourselves to the case of contact equivalence, since the case of
   right equivalence can be treated analogously. But before we come to
   the proof we would like to fix some notation. We have already seen
   that contact equivalence can be phrased via the action of the
   contact group $\mathcal{K}$ on $\Kx$. If a power series is finitely
   determined then 
   only terms up to some finite order are relevant. We
   thus consider $\Kx$ as well as the contact
   group modulo some power of the maximal ideal\lang{, and we will now
   introduce the necessary notation}.

   We denote by
   \begin{displaymath}
     J_l=\Kx/\m^{l+1}
   \end{displaymath}
   the space of $l$-jets of power series in $\Kx$. 
   Recall that each local $\K$-algebra automorphism $\varphi$ of $\Kx$
   is uniquely represented by a tuple $(\varphi_1,\ldots,\varphi_n)\in
   \Kx^n$ of power series such that
   \bmath
     \varphi_i(0)=0 \;\;\;\mbox{ for all } i=1,\ldots,n
   \emath
   and
   \begin{displaymath}
     \det\left(\frac{\partial \varphi_i}{\partial x_j}(0)\right)_{i,j=1,\ldots,n}\not=0.
   \end{displaymath}   
   We define the $l$-jet of the automorphism
   $\varphi$ as
   \bmath
     \jet_l(\varphi):=\left(\jet_l(\varphi_1),\ldots,\jet_l(\varphi_n)\right),
   \emath
   and the $l$-jet of the contact group
   \begin{displaymath}
     \mathcal{K}_l:=\jet_l(\Kx^*)\ltimes\jet_l\big(\Aut(\Kx)\big)
   \end{displaymath}
   via the multiplication
   \begin{displaymath}
     \big(\jet_l(u),\jet_l(\varphi)\big)\cdot\big(\jet_l(v),\jet_l(\psi)\big)
     :=\big(\jet_l(u\cdot \varphi(v)),\jet_l(\varphi\circ\psi)\big)
   \end{displaymath}
   which is independent of the chosen
   representatives. The $l$-jet of the contact group then operates on
   the $l$-jet of $\Kx$ via
   \begin{displaymath}
     \Phi_l:\mathcal{K}_l\times J_l\longrightarrow J_l:
     \big((\jet_l(u),\jet_l(\varphi)),\jet_l(f)\big)
     \mapsto \jet_l\big(u\cdot \varphi(f)\big),
   \end{displaymath}
   i.e.\ by taking representatives, let them act and taking the
   $l$-jet. 

   Analogously, we define the $l$-jet $\mathcal{R}_l$ of the right
   group $\mathcal{R}=\Aut(\Kx)$ and it operates on $J_l$. 

   \begin{remark}
     $J_l$ is an affine space and $\mathcal{K}_l$ and
     $\mathcal{R}_l$ are affine algebraic groups acting on $J_l$
     via a regular \emph{separable} algebraic action.     
   \end{remark}
   \lang{\begin{proof}[Proof that the actions are separable]
       We}\kurz{To see that the action is separable we}
     restrict to the case of the action of $\mathcal{K}_l$ and
     we choose coordinates on $J_l$ and of $\mathcal{K}_l$.
     Writing
     \bmath
       \jet_l(f)=\sum_{|\alpha|=0}^l a_\alpha \overline{\ux}^\alpha,
     \emath
     \lang{and}
     \bmath
       \jet_l(\varphi_i)=\sum_{|\beta|=1}^l b_{i,\beta} \overline{\ux}^\beta,
     \emath
     and
     \bmath
       \jet_l(u)=\sum_{|\gamma|=0}^l c_\gamma \overline{\ux}^\gamma
     \emath
     we have the coordinates
     \bmath
       (a_\alpha,b_{i,\beta},c_\gamma)_{\alpha,i,\beta,\gamma}
     \emath
     on $\mathcal{K}_l\times J_l$ with $c_0\not=0$
     and $\det(B)\not=0$
     where $B=(B_{ij})$ with $B_{ij}=\frac{\partial
       \varphi_i}{\partial x_j}(0)=b_{i,e_j}$ and $e_j$ \lang{is} the 
     $j$-th canonical basis vector in $\Z^n$. Using in the same manner
     the coordinates
     \bmath
       (a_\delta')_{|\delta|=0,\ldots,l}
     \emath
     on the target space the action is given by polynomial maps
     \begin{displaymath}
       a_\delta'=F_\delta(a_\alpha,b_{i,\beta},c_\gamma),
     \end{displaymath}
     and it is important to note that the inverse of the action is
     given by rational maps
     \begin{displaymath}
       a_\alpha=\frac{G_\alpha(a_\delta',b_{i,\beta},c_\gamma)}{H_\alpha(a_\delta',b_{i,\beta},c_\gamma)}.
     \end{displaymath}
     The reason for this is that we can solve for the $a_\alpha$
     degree by degree starting basically with Cramer's rule, and
     this property ensures that for the extension of the fields of rational
     functions induced by the operation $\Phi_l$ we have
     \begin{displaymath}
       \K(J_l)=\K(a_\delta')\subset \K(\mathcal{K}_l\times
       J_l)=\K(a_\alpha,b_{i,\beta},c_\gamma)
       =\K(a_\delta',b_{i,\beta},c_\gamma)=\K(J_l)(b_{i,\beta},c_\gamma).
     \end{displaymath}
     The $b_{i,\beta}$ and $c_\gamma$ are algebraically independent
     over $\K(a_\alpha)$ and comparing transcendence degrees they must
     be so over $\K(J_l)$. Thus $\K(\mathcal{K}_l\times J_l)$ is a
     purely transcendental extension of $\K(J_l)$, and it is thus by
     default a separably generated extension in the sense of
     \cite[p.~27]{Har77}. Thus $\mathcal{K}_l$ operates separably on
     $J_l$. 
   \kurz{\hfill$\Box$}\lang{\end{proof}} 

   This allows us to describe the tangent space to the orbits also in
   positive characteristic (for $\K=\C$ see \cite[Prop.~2.38]{GLS07}).

   \begin{proposition}
     Let $f\in\Kx$. Then the tangent space to the orbit of $\jet_l(f)$
     under the action of $\mathcal{R}_l$ respectively $\mathcal{K}_l$
     considered as a subspace of $J_l$ is
     \begin{displaymath}
       T_{\jet_l(f)}\big(\mathcal{R}_l\cdot \jet_l(f)\big)=
       \big(\m\cdot\jj(f)+\m^{l+1}\big)/\m^{l+1}
     \end{displaymath}
     respectively
     \begin{displaymath}
       T_{\jet_l(f)}\big(\mathcal{K}_l\cdot \jet_l(f)\big)=
       \big(\langle f\rangle+\m\cdot\jj(f)+\m^{l+1}\big)/\m^{l+1}.
     \end{displaymath}
   \end{proposition}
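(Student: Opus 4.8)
The plan is to realise the tangent space to the orbit as the image of the differential of the orbit map, which is legitimate in arbitrary characteristic precisely because the separability established in the preceding remark is available. I would treat the contact group in detail, the right group being the special case in which the unit is held equal to $1$. Consider the orbit map
\[
  \pi:\mathcal{K}_l\longrightarrow\mathcal{K}_l\cdot\jet_l(f),\qquad
  \big(\jet_l(u),\jet_l(\varphi)\big)\longmapsto\jet_l\big(u\cdot\varphi(f)\big),
\]
whose image is a smooth locally closed subvariety of $J_l$. In general one only has the inclusion $\operatorname{im}\big(d\pi_e\big)\subseteq T_{\jet_l(f)}\big(\mathcal{K}_l\cdot\jet_l(f)\big)$; but since the action of $\mathcal{K}_l$ on $J_l$ is separable, the orbit map $\pi$ is separable as well, and hence $d\pi_e$ surjects onto the full tangent space of the orbit. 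It therefore remains only to compute $\operatorname{im}\big(d\pi_e\big)$.

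To this end I would describe the tangent vectors at the identity of $\mathcal{K}_l$ by means of the dual numbers $\K[\epsilon]/(\epsilon^2)$: an $\epsilon$-point of $\mathcal{K}_l$ through the identity is a unit $u=1+\epsilon\cdot\xi$ together with an automorphism $\varphi$ with $\varphi(x_i)=x_i+\epsilon\cdot\eta_i$. Since every automorphism fixes the origin we must have $\eta_i\in\m$ for $i=1,\ldots,n$, whereas $\xi\in\Kx$ is arbitrary, the unit being constrained only by having a nonzero constant term (here equal to $1$). Substituting $\uz=(\epsilon\eta_1,\ldots,\epsilon\eta_n)$ into the expansion \eqref{eq:taylorersatz} from the proof of Theorem~\ref{thm:finitedeterminacybound}, all summands with $|\alpha|\geq 2$ drop out because $\epsilon^2=0$, leaving $\varphi(f)=f+\epsilon\cdot\sum_{i=1}^n\eta_i\cdot f_{x_i}$, so that
\[
  u\cdot\varphi(f)=f+\epsilon\cdot\Big(\xi\cdot f+\sum_{i=1}^n\eta_i\cdot f_{x_i}\Big).
\]
The coefficient of $\epsilon$ is by definition the image of this tangent vector under $d\pi_e$.

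It then remains to read off the image as $\xi$ and the $\eta_i$ vary. As $\xi$ ranges over $\Kx$ the term $\xi\cdot f$ fills out $\langle f\rangle$, and as the $\eta_i$ range over $\m$ the sum $\sum_i\eta_i\cdot f_{x_i}$ fills out $\m\cdot\jj(f)$; passing to $l$-jets, i.e.\ reducing modulo $\m^{l+1}$, yields the asserted tangent space $\big(\langle f\rangle+\m\cdot\jj(f)+\m^{l+1}\big)/\m^{l+1}$. Forcing $\xi=0$, equivalently dropping the unit factor, specialises the entire computation to the right group $\mathcal{R}_l$ and gives $\big(\m\cdot\jj(f)+\m^{l+1}\big)/\m^{l+1}$; note that the appearance of $\m\cdot\jj(f)$ rather than $\jj(f)$ is exactly the reflection of the constraint $\eta_i\in\m$ forced by the fixing of the origin. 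The one genuinely delicate step — and the main obstacle — is the surjectivity of $d\pi_e$ onto the orbit's tangent space: in characteristic zero it is automatic, but here it rests essentially on the separability of the action proved just above, failing which the image of the differential could be a proper subspace of the true tangent space.
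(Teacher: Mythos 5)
Your proposal is correct and follows essentially the same route as the paper: separability of the action guarantees that the differential of the orbit map surjects onto the tangent space of the orbit, and the dual-number computation via the expansion \eqref{eq:taylorersatz} identifies its image as the $l$-jets of $a\cdot f+\sum_{i=1}^n \phi_i\cdot f_{x_i}$ with $a\in\Kx$, $\phi_i\in\m$. The only point you compress is the passage from generic surjectivity of the differential (which is what separability of a dominant morphism of smooth varieties gives directly) to surjectivity at the identity; the paper makes this explicit by translating with group elements, which are isomorphisms, exactly the homogeneity argument hidden in the standard fact you invoke.
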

   \begin{proof}
     If $G$ denotes one of the two above groups then the action of $G$
     on $J_l$ induces a surjective separable morphism $G\longrightarrow G\cdot
     \jet_l(f)$ of smooth varieties. Thus the induced differential map on the
     tangent spaces is generically surjective (see e.g.\ the
       proof of \cite[Lem.~III.10.5.1]{Har77}). \lang{However,
         since}\kurz{Since} we can 
     translate each point in $G$ to a point where the tangent map is
     surjective and the translation is an isomorphism the differential
     map is actually always surjective. It thus suffices to understand the
     image of the tangent space to $G$ at the neutral element of the
     group and its image under the differential map. We restrict here
     to the case $G=\mathcal{K}_l$ since the proof for $\mathcal{R}_l$
     works analogously.

     The tangent space to $\mathcal{K}_l$ at $(1,\id)$ can be
     described via the local $\K$-algebra homomorphisms from the local
     ring of $\mathcal{K}_l$ to $\K[\varepsilon]$ where
     $\varepsilon^2=0$. In this sense, a tangent vector is represented
     by the residue class modulo $\m^{l+1}$ of a tuple
     \begin{displaymath}
       \big(1+\varepsilon\cdot a,\id+\varepsilon\cdot \phi\big)
     \end{displaymath}
     with $a\in\Kx$ and $\phi=(\phi_1,\ldots,\phi_n)$ where
     $\phi_i\in\m$. We now apply the differential map by acting with
     the above tuple on $f$ modulo $\m^{l+1}$. Taking
     $\varepsilon^2=0$  into account and expanding the power series as
     in \eqref{eq:taylorersatz} we get
     \begin{displaymath}
       (1+\varepsilon\cdot a)\cdot
       f\big(\ux+\varepsilon\cdot\phi\big)
       =f+\varepsilon\cdot \left(a\cdot f+ \sum_{i=1}^n f_{x_i} \cdot \phi_i\right).
     \end{displaymath}
     Interpreted in $J_l$ this tangent vector is just the $l$-jet of     
     \begin{displaymath}
       a\cdot f+ \sum_{i=1}^n f_{x_i} \cdot \phi_i,
     \end{displaymath}
     which proves the claim\tom{ (see \cite[Lem.~~7.4]{Gre98})}.
   \end{proof}

   \begin{proof}[Proof of Theorem \ref{thm:isolated}]
     We only do the proof for contact determinacy since the other case
     works analogously.
     If $f$ is contact $k$ determined and $g\in \m^{k+1}$ then for any
     $t\in \K$ the $k+1$-jet $\jet_{k+1}(f)+t\cdot \jet_{k+1}(g)$ is in the orbit
     of $\jet_{k+1}(f)$ under  $\mathcal{K}_{k+1}$. But then
     \tom{
       \begin{displaymath}
         \jet_{k+1}(f)+\varepsilon\cdot\jet_{k+1}(g)
       \end{displaymath}
       is in the tangent space of the orbit of $J_{k+1}$ if we interpret
       the tangent space via local $\K$-algebra homomorphisms, and thus }
     \begin{displaymath}
       \jet_{k+1}(g)\in T_{\jet_{k+1}(f)}\big(\mathcal{K}_{k+1}\cdot \jet_{k+1}(f)\big)=
       \big(\langle f\rangle+\m\cdot\jj(f)+\m^{k+2}\big)/\m^{k+2}.
     \end{displaymath}
     This implies
     \begin{displaymath}
       g\in \langle f\rangle+\m\cdot\jj(f)+\m^{k+2},
     \end{displaymath}
     and hence
     \begin{displaymath}
       \m^{k+1}\subseteq \langle f\rangle+\m\cdot\jj(f)+\m^{k+2}.
     \end{displaymath}
     By Nakayama's Lemma we get
     \bmath
       \m^{k+1}\subseteq \langle f\rangle+\m\cdot\jj(f),
     \emath
     as claimed.
   \end{proof}

   Combining the results of Corollary~\ref{cor:finitedeterminacybound}
   and of Theorem~\ref{thm:isolated} we obtain\kurz{:}\lang{ the following result.}

   \begin{theorem}\label{thm:fd=isolated}
     Let $0\not=f\in\m\ideal\Kx$ be a power series.
     \begin{enumerate}
     \item $f$ is an isolated singularity if and only
       if $f$ is finitely right determined.
     \item $R_f$ is an isolated hypersurface singularity if and only
       if $f$ is finitely contact determined.
     \end{enumerate}     
   \end{theorem}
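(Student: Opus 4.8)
The plan is to assemble each of the two biconditionals from the two implications already proved in this section, treating the right and the contact case in parallel. In the direction ``isolated $\Rightarrow$ finitely determined'' the essential input is Corollary~\ref{cor:finitedeterminacybound}, while the converse ``finitely determined $\Rightarrow$ isolated'' is exactly the content of Theorem~\ref{thm:isolated}.

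For the first direction the only point that needs care is that Corollary~\ref{cor:finitedeterminacybound} assumes $f\in\m^2$, whereas here we only assume $f\in\m$. I would therefore first dispose of the smooth case $\ord(f)=1$. If $f$ has a non-zero linear part, then some partial derivative $f_{x_i}$ is a unit, so $\jj(f)=\Kx$ and hence also $\tj(f)=\Kx$; consequently $\mu(f)=\tau(f)=0$ are finite, so $f$ is trivially an isolated singularity and $R_f$ an isolated hypersurface singularity. Moreover, writing $f=\ell+g$ with $\ell$ a non-zero linear form and $g\in\m^2$, the map $\varphi$ sending a suitable coordinate to $f$ and fixing the others has invertible Jacobian at the origin, so $f\sim_r\ell$; the same argument applied to any $h$ with $\jet_1(h)=\jet_1(f)$ shows $h\sim_r\ell\sim_r f$, whence $f$ is right $1$-determined and a fortiori contact $1$-determined. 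Thus in this case both sides of each biconditional hold and there is nothing more to prove. For $\ord(f)\geq 2$, i.e.\ $f\in\m^2$, Corollary~\ref{cor:finitedeterminacybound} applies verbatim: if $\mu(f)<\infty$ then $f$ is right $(2\mu(f)-\ord(f)+2)$-determined, and if $\tau(f)<\infty$ then $f$ is contact $(2\tau(f)-\ord(f)+2)$-determined, so $f$ is finitely determined in either setting.

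For the converse I would simply invoke Theorem~\ref{thm:isolated}, which already holds for all $f\in\m$. If $f$ is right $k$-determined then $\m^{k+1}\subseteq\m\cdot\jj(f)\subseteq\jj(f)$, so $\mu(f)<\infty$ and $f$ is an isolated singularity; if $f$ is contact $k$-determined then $\m^{k+1}\subseteq\langle f\rangle+\m\cdot\jj(f)\subseteq\tj(f)$, so $\tau(f)<\infty$ and $R_f$ is an isolated hypersurface singularity. Combining the two directions yields both equivalences.

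I expect no genuine obstacle here, since each implication is an immediate consequence of a result established earlier in the section. The single subtlety is the mismatch between the hypothesis $f\in\m$ of the present theorem and the hypothesis $f\in\m^2$ of Corollary~\ref{cor:finitedeterminacybound}, which is precisely why the smooth case $\ord(f)=1$ has to be singled out and handled directly as above.
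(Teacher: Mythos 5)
Your proposal is correct and follows essentially the same route as the paper, which proves Theorem~\ref{thm:fd=isolated} simply by combining Corollary~\ref{cor:finitedeterminacybound} (isolated $\Rightarrow$ finitely determined) with Theorem~\ref{thm:isolated} (finitely determined $\Rightarrow$ isolated). Your explicit treatment of the case $\ord(f)=1$, needed to bridge the gap between the hypothesis $f\in\m$ of the theorem and $f\in\m^2$ in the corollary, is a fine point the paper passes over silently, and your argument for it (a unit partial derivative gives $\mu(f)=\tau(f)=0$, and the coordinate change $x_i\mapsto f$ gives right $1$-determinacy) is the standard and correct way to settle it.
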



   \section{Non-degenerate singularities}\label{sec:nd}

   \lang{For the convenience of the reader let }\kurz{Let } us recall the definition of
   the Newton diagram and Wall's notion of a $C$-polytope (see \cite{Wal99}). 
   To each power series $f=\sum_\alpha a_\alpha\ux^\alpha\in\Kx$ we
   associate its \emph{Newton polyhedron}
   $\Gamma_+(f)$ as the convex hull of the set
   \begin{displaymath}
     \bigcup_{\alpha\in\supp(f)}\big(\alpha+\R_{\geq 0}^n\big)
   \end{displaymath}
   where $\supp(f)=\{\alpha\;|\;a_\alpha\not=0\}$ denotes the
   support of $f$. This is an unbounded polyhedron in
   $\R^n$. Following the convention of Arnol'd we call
   the union $\Gamma(f)$ of its compact faces the \emph{Newton
     diagram} of $f$, some authors call it the \emph{Newton polytope}
   resp.~ the \emph{Newton polygon} if $n=2$. By $\Gamma_-(f)$ we denote the union of all
   line segments joining the origin to a point on $\Gamma(f)$. (See
   Figure~\ref{fig:np} for an example.)      
   \begin{figure}[h]
     \centering
     \setlength{\unitlength}{0.35mm}
     \begin{tabular}{ccc}
       \begin{picture}(100,80)
         \put(10,10){\line(1,0){100}}
         \put(10,20){\line(1,0){100}}
         \put(10,30){\line(1,0){100}}
         \put(10,40){\line(1,0){100}}
         \put(10,50){\line(1,0){100}}
         \put(10,60){\line(1,0){100}}
         \put(10,70){\line(1,0){100}}         
         \put(10,10){\line(0,1){70}}
         \put(20,10){\line(0,1){70}}
         \put(30,10){\line(0,1){70}}
         \put(40,10){\line(0,1){70}}
         \put(50,10){\line(0,1){70}}
         \put(60,10){\line(0,1){70}}
         \put(70,10){\line(0,1){70}}
         \put(80,10){\line(0,1){70}}
         \put(90,10){\line(0,1){70}}
         \put(100,10){\line(0,1){70}}
         \thicklines\drawline[12](20,50)(40,30)
         \thicklines\drawline[12](40,30)(80,10)         
         \linethickness{0.2mm}\scriptsize
         \Thicklines
         \put(20,50){\line(0,1){30}}
         \put(80,10){\line(1,0){30}}
         \put(20,70){\line(1,1){10}}
         \put(20,60){\line(1,1){20}}
         \put(20,50){\line(1,1){30}}
         \put(25,45){\line(1,1){35}}
         \put(30,40){\line(1,1){40}}
         \put(35,35){\line(1,1){45}}
         \put(40,30){\line(1,1){50}}
         \put(46.5,26.5){\line(1,1){52.5}}
         \put(53.5,23.5){\line(1,1){55.5}}
         \put(60,20){\line(1,1){50}}
         \put(66.5,16.5){\line(1,1){43.5}}
         \put(72.5,12.5){\line(1,1){37}}
         \put(80,10){\line(1,1){30}}
         \put(90,10){\line(1,1){20}}
         \put(100,10){\line(1,1){10}}
         
         \put(20,50){\circle*{4}}
         \put(30,40){\circle*{4}}
         \put(40,30){\circle*{4}}
         \put(80,10){\circle*{4}}
         \put(50,30){\circle{4}}
         \put(35,-7){\mbox{${\Gamma}_{_+}(f)$}}
       \end{picture}
       &
       \begin{picture}(100,80)
         \put(10,10){\line(1,0){90}}
         \put(10,20){\line(1,0){90}}
         \put(10,30){\line(1,0){90}}
         \put(10,40){\line(1,0){90}}
         \put(10,50){\line(1,0){90}}
         \put(10,60){\line(1,0){90}}
         \put(10,70){\line(1,0){90}}
         \put(10,10){\line(0,1){70}}
         \put(20,10){\line(0,1){70}}
         \put(30,10){\line(0,1){70}}
         \put(40,10){\line(0,1){70}}
         \put(50,10){\line(0,1){70}}
         \put(60,10){\line(0,1){70}}
         \put(70,10){\line(0,1){70}}
         \put(80,10){\line(0,1){70}}
         \put(90,10){\line(0,1){70}}
         \thicklines\drawline[12](20,50)(40,30)
         \thicklines\drawline[12](40,30)(80,10)
         \put(20,50){\circle*{4}}
         \put(30,40){\circle*{4}}
         \put(40,30){\circle*{4}}
         \put(80,10){\circle*{4}}
         \put(25,-7){\mbox{ ${\Gamma(f)}$}}
       \end{picture}
       &
       \begin{picture}(100,80)
         \put(10,10){\line(1,0){90}}
         \put(10,20){\line(1,0){90}}
         \put(10,30){\line(1,0){90}}
         \put(10,40){\line(1,0){90}}
         \put(10,50){\line(1,0){90}}
         \put(10,60){\line(1,0){90}}
         \put(10,70){\line(1,0){90}}
         \put(10,10){\line(0,1){70}}
         \put(20,10){\line(0,1){70}}
         \put(30,10){\line(0,1){70}}
         \put(40,10){\line(0,1){70}}
         \put(50,10){\line(0,1){70}}
         \put(60,10){\line(0,1){70}}
         \put(70,10){\line(0,1){70}}
         \put(80,10){\line(0,1){70}}
         \put(90,10){\line(0,1){70}}
         \thicklines\drawline[12](20,50)(40,30)
         \thicklines\drawline[12](40,30)(80,10)
         \thicklines\drawline[12](10,10)(20,50)
         \linethickness{0.2mm}\scriptsize
         \Thicklines
         \put(10,10){\line(1,0){70}}
         \put(70,10){\line(1,1){3.3}}
         \put(60,10){\line(1,1){6.7}}
         \put(50,10){\line(1,1){10}}
         \put(40,10){\line(1,1){13.3}}
         \put(30,10){\line(1,1){16.7}}
         \put(20,10){\line(1,1){20}}
         \put(10,10){\line(1,1){25}}
         \put(13.1,23.1){\line(1,1){17}}
         \put(17,37){\line(1,1){8}}
         \put(20,50){\circle*{4}}
         \put(30,40){\circle*{4}}
         \put(40,30){\circle*{4}}
         \put(80,10){\circle*{4}}
         \put(35,-7)
         {\mbox{ ${\Gamma}_{_{-}}(f)$}}
       \end{picture}
     \end{tabular}
     
     \caption{The Newton diagram of $x\cdot(y^4+xy^3+x^2y^2-x^3y^2+x^6)$.}
     \label{fig:np}
   \end{figure}
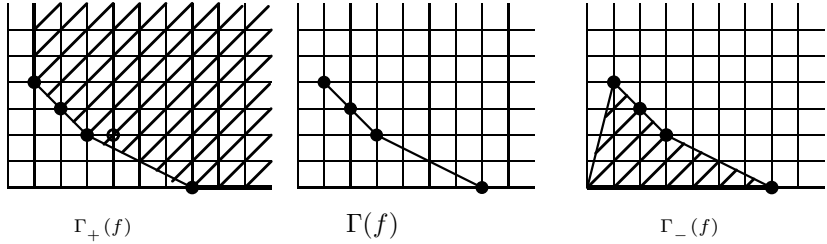
   
   If the Newton diagram of a singularity $f$ meets all coordinate axes
   we call $f$ \emph{convenient}. In this case the Newton diagram
   of $f$ can be used to define a filtration on $\Kx$ by finite
   dimensional vector spaces. However, not
   every isolated singularity is convenient, and one then has to
   enlarge the Newton diagram. A compact rational polytope $P$ of
   dimension $n-1$ in the positive orthant $\R_{\geq 0}^n$ is called a
   $C$-polytope if the region above $P$ is 
   convex and if every ray in the positive orthant emanating
   from the origin meets $P$ in exactly one point. The Newton diagram
   $\Gamma(f)$ is a $C$-polytope if and only if $f$ is convenient.

   \smallskip

   We will now first introduce the different notions of
   non-degeneracy.
   For this let $f=\sum_\alpha a_\alpha\cdot \ux^\alpha\in\m$ be a power
   series, let $P$ be a $C$-polytope such that $\supp(f)$ has \emph{no}
   point below $P$, and let $\Delta$ be a face of $P$. 
   By $\In_\Delta(f)=\sum_{\alpha\in\Delta}a_\alpha\cdot\ux^\alpha$
   we denote the \emph{initial form} or \emph{principal part} of $f$ along
   $\Delta$. 

   Following Wall we
   call $f$ \emph{non-degenerate \ND\ along
     $\Delta$} if the Jacobian ideal $\jj(\In_\Delta(f))$ has no zero in the torus
   $(\K^*)^n$. $f$ is then said to be \emph{Newton non-degenerate}
   \NND\ if $f$ is non-degenerate along each
   \emph{face} of the Newton diagram $\Gamma(f)$. Note that \lang{unlike
     Wall} we do \emph{not} require $f$ to be \emph{convenient}.

   To define inner non-degeneracy we need to fix two more notions.
   The face $\Delta$ is an \emph{inner face} of $P$ if it is not
   contained in any coordinate hyperplane.
   And each point $q\in\K^n$ determines a coordinate hyperspace
   $H_q=\bigcap_{q_i=0}\{x_i=0\}\subseteq\R^n$ in $\R^n$.
   We call $f$ \emph{inner non-degenerate \SND\ along
     $\Delta$} if for each zero $q$ of the Jacobian ideal $\jj(\In_\Delta(f))$ the
   polytope $\Delta$ contains no point on $H_q$.
   Then $f$ is called \emph{inner Newton non-degenerate} \SNND\
   w.r.t.\ a $C$-polytope $P$ if $f$ is inner non-degenerate along each
   \emph{inner} face of $P$ and $\supp(f)$ has no point below $P$.
   We say that $f$ satisfies \SNND\ if there exists a $C$-polytope $P$
   such that $f$ is \SNND\ w.r.t.\ $P$.

   Finally, we call $f$ \emph{weakly non-degenerate \WND\ along
     $\Delta$} if the Tjurina ideal $\tj(\In_\Delta(f))$ has no zero in the torus
   $(\K^*)^n$,
   and $f$ is called \emph{weakly Newton non-degenerate} \WNND\ if $f$ is
   weakly non-degenerate along each
   \emph{facet} of $\Gamma(f)$. Recall that a facet is a
   top-dimensional face.

   Non-degenerate singularities as introduced above are interesting
   since for these the Newton diagram can be used to compute
   invariants combinatorially as we show below and moreover most singularities are
   non-degenerate (see Remark \ref{rem:nd} (j)).

   \begin{remark}\label{rem:nd}
     We collect some easy facts on and relations between the
     different types of non-degeneracy. For any occuring $C$-polytope
     and power series $f$ we assume that no point in $\supp(f)$ lies
     below $P$.
     \begin{enumerate}
     \item Each of the non-degeneracy conditions
       introduced above only depends the principal
       part $\In_P(f)=\sum_{\alpha\in P}a_\alpha\cdot\ux^\alpha$ of
       $f$ w.r.t.\ $P$.
     \item Obviously \ND\ along $\Delta$ implies \WND\ along $\Delta$
       and both are equivalent in characteristic zero, or, more
       generally, if $\Char(\K)$ does not divide the weighted degree of
       $\In_\Delta(f)$. 
     \item \WNND\ along $\Delta$ is strictly weaker than \NND\ along
       $\Delta$ in positive characteristic, but they are equivalent in
       characteristic zero. Moreover, \WNND\ imposes only conditions on
       the facets of $\Gamma(f)$ while \NND\ does so for all faces of
       any dimension.\\ E.g.\ $f=x^3+y^2$ with
       $\Char(\K)=3$ is \WNND\ but not \NND, since $f$ is
       not \ND\ along $\Delta=\{(3,0)\}$.
     \item If $f$ is \SND\ along $\Delta$, then $f$ is \ND\ along
       $\Delta$, but the converse is not true in general.\\
       \laenger{ For this note that $q$ is a zero of $\jj(\In_\Delta(f))$ in
         the torus, then $H_q=\R^n$ intersects $\Delta$. }
       E.g.\ $f=x^2y^2+y^4$
       and $\Delta$ the line segment from $(4,0)$ to $(0,4)$, then $f$
       satisfies \ND\ along $\Delta$, but not \SND\laenger{, since
         $q=(1,0)\in\Sing(\In_\Delta(f))$ and $H_q\cap\Delta=\{(4,0)\}$}.
     \item If $\Delta$ does not meet any coordinate hyperplane, then  $f$
       is \ND\ along $\Delta$ if and only if $f$ is \SND\ along $\Delta$. 
       \laenger{ 
         For this note that $\Delta\cap H_q\not=\emptyset$ by
         assumption implies $q\in(K^*)^n$.}
     \item By (d) \NND\ does not imply \SNND\, but also \SNND\ need
       not imply \NND\ since it only imposes conditions on the inner
       faces, see (g).
     \item $f$ can be convenient and \SNND\ without satisfying
       \NND.\\ E.g. $f=(x+y)^2+xz+z^2$ with $\Char(\K)\not=2$, then
       $\Gamma(f)$ has a unique facet $\Delta$ with $f=\In_\Delta(f)$
       and $\Sing(f)=\{0\}$. Thus $f$ is \SNND\ since no other face is
       inner. But $f$ is not \ND\
       along the line segment from $(2,0,0)$ to $(0,2,0)$ which is a
       face of $\Gamma(f)$ (see also \cite{Kou76}).
     \item If $f$ is \NND\ and $k\cdot e_i\in\Gamma(f)$, where $e_i$
       is the $i$-th standard basis vector of $\R^n$, then
       $\Char(\K)$ does not divide $k$.
       \laenger{
         
         For this note that $\Delta=\{k\cdot e_i\}$ is a face of
         $\Gamma(f)$ and $\In_\Delta(f)=a\cdot x_i^k$ for some
         non-zero constant $a\in\K^*$. By assumption 
         $\jj(\In_\Delta(f))$ has no zero in the torus, so the characteristic of
         $\K$ cannot divide $k$.}       
     \item $f$ satisfies  \SND\ at an inner vertex
       $\alpha=(\alpha_1,\ldots,\alpha_n)$ of $P$ if and only if
       $\alpha$ is a vertex of $\Gamma(f)$ and some $\alpha_i$ is not
       divisible by $\Char(\K)$.
       \laenger{ 

         For this set $\Delta=\{\alpha\}$, so that
         $\In_\Delta(f)=a_\alpha\cdot \ux^\alpha$ has a non-zero
         partial derivative if and only if $\alpha\in\Gamma(f)$ and
         there is some $i$ such that $\Char(\K)$ does not divide
         $\alpha_i$, and this is equivalent to \SND\ since $\Delta$
         does not intersect any coordinate space. Note that if
         $\alpha$ is in $\Gamma(f)$ then it is necessarily a vertex
         since no point of $\supp(f)$ lies below $P$ and $\alpha$ was
         assumed to be a vertex of $P$.}
     \item In characteristic zero each of the above non-degeneracy
       conditions is a generality condition \laenger{(see
         \cite[Thm.~I]{Kou76} and \cite[Thm.~3.1]{Wal99})} in the sense that fixing a
       $C$-polytope $P$ then, among all polynomials
       $f$ with $\supp(f)\subseteq P$, there is a Zariski open dense
       subset which satisfies the non-degeneracy condition. In
       positive characteristic some additional assumptions on the $C$-polytope
       $P$ are necessary, like that not all coordinates of a vertex
       should be divisible by the characteristic. 
     \end{enumerate}
     \lang{\hfill$\Box$}
   \end{remark}

   The following remark sheds some light on the definition of
   \NND.

   \begin{remark}[\cite{Kou76}]
     Kouchnirenko defines \ND\ and \NND\ 
     by considering common zeros of $x_i\cdot
     \In_\Delta(f)_{x_i}$ for $i=1,\ldots,n$, since these
     polynomials are better suited with respect to the piecewise
     filtration induced by $\Gamma(f)$ (see
     \cite{Kou76} or \cite[Sec.~3]{BGM10a} for details on this filtration).
     However, they have no common zero
     in the torus if and only if $\jj(\In_\Delta(f))$ has no zero
     in the torus, so that the two definitions
     coincide. 

     Each face $\Delta$ of the Newton diagram of a convenient power
     series $f$ determines
     a finitely generated semigroup $C_\Delta$ in $\Z^n$ by considering those
     lattice points which lie in the cone over $\Delta$ with the
     origin as base. This semigroup then determines a finitely
     generated $\K$-algebra
     $\K[C_\Delta]=\K[\ux^\alpha|\alpha\in C_\Delta]$, and
     the polynomials $x_i\cdot \In_\Delta(f)_{x_i}$, $i=1,\ldots,n$
     generate an ideal, say $I_\Delta$, in $\K[C_\Delta]$. 

     It then turns out that (see \cite[Thm.~6.2]{Kou76} or
     \cite[Prop.~2.2]{Wal99})
     \begin{displaymath}
       \dim_\K(\K[C_\Delta]/I_{\Delta})<\infty
       \;\;\;\Longleftrightarrow\;\;\;
       f\mbox{ is \ND\ along all faces of } \Delta.
     \end{displaymath}

     \lang{We should like to point out  that $f$  \ND\
     along $\Delta$ is not sufficient for the finiteness of
     $\dim_\K(\K[C_\Delta]/I_\Delta)$. Consider e.g.\
     $f=x^3+y^2$ with $\Char(\K)=3$ and 
     $\Delta=\Gamma(f)$, then $f=\In_\Delta(f)$ and $\jj(f)$ has no
     zero in the torus, yet $\K[C_\Delta]/I_\Delta=\K[x,y]/\langle
     y\rangle$ has infinite dimension. }

     The piecewise filtration induced by the $C$-polytope $P=\Gamma(f)$ determines a graded
     algebra $\gr_P(\Kx/I_P)$ for the piecewise homogeneous ideal
     $I_P=\langle x_i\cdot\In_P(f)_{x_i}\;|\;i=1,\ldots,n\rangle$.
     We can view $\K[C_\Delta]/I_\Delta$ in a natural way as
     a quotient of $\gr_P(\Kx/I_P)$, and we then get an
     injective map (see \cite[Prop.~2.6]{Kou76})
     \begin{displaymath}
       \gr_P(\Kx/I_P)\longrightarrow \bigoplus_{\Delta\text{ face of
         }\Gamma(f)} \K[C_\Delta]/I_\Delta.
     \end{displaymath}
     This shows right away that $\dim_\K(\gr_P(\Kx/I_P)$ is finite,
     if $f$ is \NND\ and convenient. 
     From this it is not hard to see that a monomial $K$-vector space basis of
     $\gr_P(\Kx/I_P)$ actually generates $M_f$ (see
     \cite[Sec.~3]{BGM10a}), and it thus follows: 
     \begin{displaymath}
       f\mbox{ is \NND\  and convenient}\;\;\;\Longrightarrow\;\;\; \mu(f)<\infty.
     \end{displaymath}
     \lang{\hfill$\Box$}
   \end{remark}

   In \cite{Kou76} Kouchnirenko \lang{not only showed that a Newton
   non-degenerate singularity $f$ is isolated, but he gives}\kurz{gave
   also} a formula
   for the Milnor number in terms of certain volumes of the faces of
   $\Gamma_-(f)$. 

   For any compact polytope $Q$ in $\R_{\geq 0}^n$ we denote by
   $V_k(Q)$ the sum of the $k$-dimensional Euclidean volumes of the
   intersections of $Q$ with the $k$-dimensional coordinate subspaces
   of $\R^n$, and following Kouchnirenko we then call
   \begin{displaymath}
     \mu_N(Q):=\sum_{k=0}^n (-1)^{n-k}\cdot k!\cdot V_k(Q)
   \end{displaymath}
   the \emph{Newton number} of $Q$. For a power series $f\in\Kx$ we
   define the \emph{Newton number} of $f$ to be
   \begin{displaymath}
     \mu_N(f):=\sup\left\{\mu_N(\Gamma_-(f_m))\;\Big|\;f_m=f+x_1^m+\ldots+x_n^m,\; m\geq 1\right\}\in\Z\cup\{\infty\}.
   \end{displaymath}
   If $f$ is convenient, then
   \begin{displaymath}
     \mu_N(f)=\mu_N(\Gamma_-(f)).
   \end{displaymath}

   The following theorem was proved by Kouchnirenko in arbitrary characteristic.

   \begin{theorem}[Kouchnirenko, \cite{Kou76}]\label{thm:kouchnirenko}
     For $f\in\Kx$ we have $\mu_N(f)\leq \mu(f)$, and if $f$ is
     \NND\ and convenient then  $$\mu(f)=\mu_N(f)<\infty.$$
   \end{theorem}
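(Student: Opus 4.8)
This is Kouchnirenko's theorem, so the plan is to follow his filtration-theoretic strategy, exploiting the graded algebra $\gr_P(\Kx/I_P)$ and its decomposition into the semigroup rings $\K[C_\Delta]$ recorded in the preceding remark. The statement naturally splits into the general lower bound $\mu_N(f)\le\mu(f)$, valid for every $f$, and, in the convenient \NND\ case, the matching upper bound together with finiteness. The central tool is the Newton filtration: for convenient $f$ the polytope $P=\Gamma(f)$ determines a piecewise linear weight $\nu$ on $\Kx$, homogeneous of degree $1$ (the gauge of $\Gamma_-(f)$), hence a decreasing filtration whose associated graded ring is, by the remark, a subring of $\bigoplus_\Delta\K[C_\Delta]$ with $\Delta$ ranging over the faces of $\Gamma(f)$. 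For the upper bound I would first show that \NND\ forces the $\nu$-leading form of each partial derivative to be the derivative of the principal part, $\In(f_{x_i})=\In_P(f)_{x_i}$; non-degeneracy is exactly what rules out the characteristic-$p$ cancellation that could otherwise drop the weight. Since passing to the associated graded preserves $\K$-dimension and the leading ideal of $\jj(f)$ contains the initial forms of its generators, this gives
\[
  \mu(f)=\dim_\K\gr_P\bigl(\Kx/\jj(f)\bigr)\le\dim_\K\bigl(\gr_P(\Kx)\big/\langle\In_P(f)_{x_i}\mid i=1,\ldots,n\rangle\bigr).
\]

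It then remains to identify the right-hand dimension with $\mu_N(\Gamma_-(f))$. For this I would run the Koszul complex of the sequence $\bigl(\In_P(f)_{x_i}\bigr)_i$ over $\gr_P(\Kx)$, working face by face: \NND\ says the $\In_\Delta(f)_{x_i}$ have no common zero in $(\K^*)^n$, so on each $\K[C_\Delta]$ they cut out a finite scheme and the Koszul complex is acyclic off the top. Taking the alternating sum of the Hilbert series of the Koszul terms, the leading volume $n!\,V_n$ and the lower-dimensional corrections assemble precisely into $\mu_N(\Gamma_-(f))=\sum_{k=0}^n(-1)^{n-k}k!\,V_k(\Gamma_-(f))$, which is finite because $\Gamma_-(f)$ is bounded. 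Strictness of the filtration under \NND\ upgrades the displayed inequality to an equality, yielding $\mu(f)=\mu_N(\Gamma_-(f))=\mu_N(f)<\infty$.

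For the general lower bound I would reduce to the convenient case through the very definition $\mu_N(f)=\sup_m\mu_N\bigl(\Gamma_-(f_m)\bigr)$ with $f_m=f+x_1^m+\cdots+x_n^m$, so that it suffices to bound each $\mu_N(\Gamma_-(f_m))$ by $\mu(f)$. Fixing a convenient Newton diagram, the \NND\ members form a dense family whose Milnor number equals the Newton number by the equality just proved, while a leading-term estimate shows that $\mu$ can only increase under specialisation within the family; hence $\mu(f_m)\ge\mu_N(\Gamma_-(f_m))$, and relating $\mu(f_m)$ to $\mu(f)$ (trivial if $\mu(f)=\infty$) completes the bound.

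The main obstacle is the interplay with positive characteristic. Both the acyclicity of the Koszul complex and the identity $\In(f_{x_i})=\In_P(f)_{x_i}$ can fail once $\Char(\K)$ divides a relevant weighted degree, and it is precisely the \NND\ hypothesis—no common torus zero of the Jacobian of each principal part—that must be leveraged, face by face, to keep the Euler-characteristic bookkeeping and the strictness of the Newton filtration intact. The secondary difficulty is the semicontinuity step in the general lower bound, which in arbitrary characteristic has to be argued algebraically via leading forms rather than by the usual complex-analytic deformation.
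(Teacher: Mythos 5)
The paper offers no proof of this statement at all: it is quoted directly from Kouchnirenko \cite{Kou76}, and the only in-paper material is the remark preceding it, which recalls the semigroup rings $\K[C_\Delta]$, the piecewise homogeneous ideal $I_P=\langle x_i\cdot\In_P(f)_{x_i}\;|\;i=1,\ldots,n\rangle$ and the injection $\gr_P(\Kx/I_P)\to\bigoplus_\Delta\K[C_\Delta]/I_\Delta$, from which only the finiteness $\mu(f)<\infty$ is deduced. Your outline follows the same general strategy as the cited source, but it has two genuine gaps. The first is your key claim that \NND\ forces $\In(f_{x_i})=\In_P(f)_{x_i}$. This is false even in characteristic zero and for \NND\ series: take $f=x^3+xy+y^3$; the Newton weight of $x^2$ is $2/3$ while that of $y$ is $1/3$, so $\In(f_x)=y\neq 3x^2+y=\In_P(f)_x$. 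The partial derivatives are simply not adapted to the Newton filtration, which is exactly why Kouchnirenko --- and the paper's remark --- work with $x_i\cdot f_{x_i}$, whose initial form \emph{is} $x_i\cdot\In_P(f)_{x_i}$. This repair, however, destroys your bookkeeping: since $\langle x_1f_{x_1},\ldots,x_nf_{x_n}\rangle\subseteq\jj(f)$, the quotient by $I_P$ now bounds $\mu(f)$ only through the nontrivial statement that a monomial basis of $\gr_P(\Kx/I_P)$ spans $M_f$ (the step the remark defers to \cite[Sec.~3]{BGM10a}), and $\dim_\K\gr_P(\Kx/I_P)$ is in general strictly larger than $\mu_N(f)$ --- for $f=x^2+y^2$ it is $4$ while $\mu_N(f)=1$ --- so the equality $\mu(f)=\mu_N(f)$ still requires the full homological analysis of the Koszul complex on the $x_if_{x_i}$, face by face, which your sketch asserts rather than supplies.

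The second gap is your proof of the unconditional inequality $\mu_N(f)\le\mu(f)$, which rests on the density of \NND\ series with a prescribed convenient Newton diagram together with semicontinuity of $\mu$. Density fails in positive characteristic: by Remark~\ref{rem:nd}~(h) and (j) of the paper, if the diagram has a vertex $k\cdot e_i$ with $\Char(\K)\mid k$, then \emph{no} power series with that diagram is \NND, so there is nothing to specialise from. Since the theorem is asserted --- and used throughout the paper --- in arbitrary characteristic, this step is not a removable technicality but a failure of the approach; you even flag the issue yourself in your closing paragraph without resolving it. Kouchnirenko's own proof of $\mu_N(f)\le\mu(f)$ is purely algebraic, via the Newton filtration and the same Koszul-theoretic apparatus, and uses no deformation or density argument; some characteristic-free replacement of this kind is what your lower bound would need.
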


   Actually, Kouchnirenko shows that in \emph{characteristic zero} the
   equation $\mu(f)=\mu_N(f)$ still holds if $f$ is \NND\ but not convenient. We will show in Proposition
   \ref{prop:planarnnd} that at least in the planar case this also
   holds in arbitrary characteristic.

   \begin{example}\label{ex:nonnd}
     Newton non-degeneracy is sufficient but not necessary to ensure
     that the Milnor number coincides with the Newton number and both
     are finite.

     If $\Char(\K)\not=2$ then $f=(x+y)^2+xz+z^2$ is not \NND\ (see
     Remark \ref{rem:nd}), but
     \begin{displaymath}
       \mu(f)=\mu_N(f)=1.
     \end{displaymath}
     \begin{figure}[h]
       \centering
       \setlength{\unitlength}{0.4mm}
       \begin{picture}(40,50)(-10,-20)
         \linethickness{0.1mm}\scriptsize
         \thicklines\dashline[+30]{3}(0,0)(20,0)
         \thicklines\dashline[+30]{3}(0,0)(0,20)
         \thicklines\dashline[+30]{3}(0,0)(-15,-15)
         \thicklines\drawline[12](20,0)(40,0)
         \thicklines\drawline[12](0,20)(0,40)
         \thicklines\drawline[12](-15,-15)(-30,-30)
         \thicklines\drawline[12](20,0)(0,20)
         \thicklines\drawline[12](-15,-15)(20,0)
         \thicklines\drawline[12](-15,-15)(0,20)
         \put(20,0){\circle*{4}}
         \put(0,20){\circle*{4}}
         \put(-7.5,2.5){\circle*{4}}
         \put(-15,-15){\circle*{4}}
         \put(2.5,-7.5){\circle*{4}}
         
         \put(-34,-30){$x$}
         \put(40,-4){$y$}
         \put(-4,40){$z$}
       \end{picture}      
       \caption{The Newton diagram of $f=(x+y)^2+xz+z^2$}
       \label{fig:npnd}
     \end{figure}
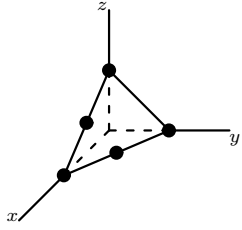
   \end{example}

   Wall proved in \cite{Wal99} the analogous result for inner
   Newton non-degenerate singularities in characteristic zero. Taking
   Theorem \ref{thm:fd=isolated} into account we generalise this to arbitrary characteristic.

   \begin{theorem}\label{thm:snnd}
     If $f\in\Kx$ is \SNND\ w.r.t.\ some $C$-polytope, then
     $$\mu(f)=\mu_N(f)=\mu_N\big(\Gamma_-(f)\big)<\infty.$$
   \end{theorem}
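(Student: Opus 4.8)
The plan is to compute $\mu(f)$ through the piecewise (Newton) filtration attached to the $C$-polytope $P$ and to match the resulting dimension with the combinatorial Newton number, following Wall \cite{Wal99} but isolating the one place where his argument uses $\Char(\K)=0$. Since Theorem~\ref{thm:kouchnirenko} already gives $\mu_N(f)\le\mu(f)$, the whole content is the reverse inequality together with finiteness; and since $\mu_N(f)$ is by definition the supremum of the $\mu_N(\Gamma_-(f_m))$, it is enough to work with a fixed convenient enlargement $f_m=f+x_1^m+\cdots+x_n^m$ once I have checked that for $m\gg 0$ the added monomials lie strictly above $P$, so that they neither change $\In_\Delta(f)$ along any inner face $\Delta$ of $P$ nor affect the inner non-degeneracy of $f$.

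First I would set up the filtration. Assigning to a monomial $\ux^\alpha$ the level at which the ray $\R_{\ge 0}\cdot\alpha$ meets $P$ defines a piecewise linear filtration on $\Kx$, whose associated graded ring $\gr_P\Kx$ decomposes along the faces of $P$. The partial derivatives of the initial form $\In_P(f)$ generate the initial Jacobian ideal, and the cokernel $\gr_P\big(\Kx/\jj(f)\big)$ is controlled, face by face, by a sum $\bigoplus_\Delta \K[C_\Delta]/I_\Delta$ over the faces $\Delta$ of $P$ (compare \cite{Kou76}). The point of inner non-degeneracy is exactly that along each \emph{inner} face $\Delta$ one has \SND, hence \ND\ by Remark~\ref{rem:nd}~(d), so the initial forms $\In_\Delta(f)_{x_i}$ have no common zero on the torus $(\K^*)^n$ and the corresponding graded summand has the finite dimension predicted by the Euclidean volume of the cone over $\Delta$.

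The delicate point --- and the step where $\Char(\K)>0$ must be handled with care --- is that \SNND\ imposes no condition along the faces of $P$ lying in a coordinate hyperplane, so the naive reduction to the Newton non-degenerate situation of Theorem~\ref{thm:kouchnirenko} is unavailable. Here I would argue that such faces contribute to the dimension count only through the alternating sum $\sum_{k=0}^n(-1)^{n-k}k!\,V_k$ defining the Newton number: because $\supp(f)$ has no point below the $C$-polytope $P$, the filtration is bounded below, and the contributions of the coordinate strata are governed purely by the lattice-point geometry of $\Gamma_-(f_m)$ and cancel in inclusion--exclusion exactly as in Wall's combinatorial computation, which is characteristic-free. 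This should give $\dim_\K \gr_P\big(\Kx/\jj(f)\big)=\mu_N(\Gamma_-(f_m))=\mu_N(f)<\infty$.

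It then remains to pass from the graded dimension back to $\mu(f)$. Finiteness of the graded dimension yields $\overline{\m}^{N}\subseteq\gr_P\jj(f)$ for some $N$, whence $\m^{N'}\subseteq\jj(f)$ by Nakayama's Lemma and thus $\mu(f)<\infty$; equivalently, $f$ is then an isolated singularity and finitely determined by Theorem~\ref{thm:fd=isolated}. Once $\mu(f)$ is finite the filtration is exhaustive and degreewise finite, so $\mu(f)=\dim_\K\gr_P\big(\Kx/\jj(f)\big)=\mu_N(f)=\mu_N(\Gamma_-(f))$, as claimed. I expect the main obstacle to be the third paragraph: verifying in positive characteristic that \SND\ along the inner faces (rather than full Newton non-degeneracy) is enough to force each inner graded summand to have the volume-predicted dimension, i.e. that the Koszul/Jacobian complex of the initial forms still has the expected homology despite the possible vanishing of derivatives of pure powers --- this is the precise content that replaces Wall's use of $\Char(\K)=0$.
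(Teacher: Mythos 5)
Your third paragraph is the gap, and the ``inclusion--exclusion cancellation'' you invoke there is not an argument. You build the graded object from Kouchnirenko's ideals $I_\Delta=\langle x_i\cdot\In_\Delta(f)_{x_i}\;|\;i=1,\ldots,n\rangle$ in $\K[C_\Delta]$, but for these the finiteness criterion (\cite[Thm.~6.2]{Kou76}, \cite[Prop.~2.2]{Wal99}) is: $\dim_\K(\K[C_\Delta]/I_\Delta)<\infty$ if and only if $f$ is \ND\ along \emph{all} faces of $\Delta$ --- including those lying in coordinate hyperplanes, on which \SNND\ imposes no condition at all. This failure is real, not a bookkeeping artifact: for $f=x^3+y^2$ with $\Char(\K)=3$ and $\Delta=\Gamma(f)$ one has $\jj(\In_\Delta(f))$ zero-free on the torus, yet $\K[C_\Delta]/I_\Delta=\K[x,y]/\langle y\rangle$ is infinite dimensional. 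Once a single summand $\K[C_\Delta]/I_\Delta$ is infinite dimensional, no alternating sum of volumes can ``cancel'' it; dimensions of infinite-dimensional pieces do not subtract. So your claimed identity $\dim_\K\gr_P\big(\Kx/\jj(f)\big)=\mu_N(\Gamma_-(f_m))<\infty$ is exactly the unproved statement, and the mechanism you hope for does not exist in this setup.

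The missing idea --- which is how the paper (following Wall) proceeds --- is to replace $I_\Delta$ by a genuinely larger ideal adapted to the toric boundary: one takes the $\K[C_\Delta]$-module $D_\Delta$ of all monomial derivations $\ux^\alpha\partial_{x_i}$ preserving $\K[C_\Delta]$, and sets $J_\Delta=\{\xi(\In_\Delta(f))\;|\;\xi\in D_\Delta\}$. Wall's Proposition~2.2 then says $\dim_\K(\K[C_\Delta]/J_\Delta)<\infty$ if and only if $f$ is \SND\ along the \emph{inner} faces of $\Delta$ only --- precisely the hypothesis \SNND\ provides --- and stacking the $\K[C_\Delta]/J_\Delta$ into an exact sequence of complexes yields $\mu(f)<\infty$ by homological arguments that are characteristic-free. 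Note also that you misplace where $\Char(\K)=0$ actually enters Wall's original proof: it is not in the graded/Koszul computation (that part survives verbatim in positive characteristic), but in the final passage from $\mu(f)<\infty$ to $\mu(f)=\mu_N(f)=\mu_N(\Gamma_-(f))$, which needs finite determinacy of $f$. Your last paragraph does invoke Theorem~\ref{thm:fd=isolated} for exactly this, which is the paper's actual new ingredient --- so your endgame is right, but the core finiteness step resting on $I_\Delta$ would fail and must be rebuilt on $D_\Delta$ and $J_\Delta$.
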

   \begin{proof}
     Wall introduces the $\K[C_\Delta]$-module
     \begin{displaymath}
       D_\Delta=\langle \ux^\alpha\cdot\partial_{x_i}\;|\;
       \ux^\alpha\cdot\partial_{x_i}(f)\in \K[C_\Delta]\;\;\forall\;f\in\K[C_\Delta]\rangle,
     \end{displaymath}
     generated by all
     monomial derivations which leave $\K[C_\Delta]$ invariant and considers
     the ideal
     \begin{displaymath}
       J_\Delta=\{\xi(\In_\Delta(f))\;|\;\xi\in D_\Delta\}
     \end{displaymath}
     which results by applying $D_\Delta$ to
     $\In_\Delta(f)$. He then shows that (see \cite[Prop.~2.2]{Wal99})
     \begin{displaymath}
       \dim_\K(\K[C_\Delta]/J_{\Delta})<\infty
       \;\;\;\Longleftrightarrow\;\;\;
       f\mbox{ is \SND\ along all \emph{inner} faces of }\Delta.
     \end{displaymath}
     The rings $\K[C_\Delta]/J_\Delta$ can be stacked neatly in an
     exact sequence of complexes whose homology was used by Wall to show
     (see \cite[Lem.~1.2, Prop.~2.3]{Wal99}):
     \begin{displaymath}
       f\mbox{ is \SNND }\;\;\;\Longrightarrow\;\;\;
       \mu(f)<\infty.
     \end{displaymath}
     Wall's arguments use only standard facts from toric geometry and
     homological algebra and do not depend on the characteristic of
     the base field. 

     It thus remains to show that
     $\mu(f)=\mu_N(f)=\mu_N\big(\Gamma_-(f)\big)$, but the proof for
     this is the same as in \cite[Thm.~1.6]{Wal99} if we use
     that by Theorem \ref{thm:fd=isolated} $\mu(f)<\infty$ implies that
     $f$ is finitely determined.
   \end{proof}

   \lang{
     \begin{example}[$W_{1,1}$-Singularities]
       A series $f$ with principal part
       $\In_P(f)=x^7+x^3y^2+y^4$ for $P=\Gamma(f)$ is \SNND\ if
       $\Char(\K)\not\in\{2,3,7\}$, and thus $f$ is an isolated
       singularity. 
     \end{example}
   }

   Inner Newton
   non-degeneracy has the advantage over Newton non-degeneracy that
   all right semi-quasihomogeneous singularities satisfy this condition,
   even if they are not convenient. This is an easy consequence of the
   observation in Lemma \ref{lem:singzero} as we will see in Proposition
   \ref{cor:snnd-rsqh}. 

   A polynomial $f\in\K[\ux]$ is said to be \emph{quasihomogeneous}
   \qh\  w.r.t.\ a weight vector $w\in\Z_{>0}^n$ if all monomials
   $\ux^\alpha$, $\alpha\in\supp(f)$, have the same \emph{weighted degree}
   $\deg_w(\ux^\alpha)=w\cdot\alpha=w_1\cdot\alpha_1+\ldots+w_n\cdot\alpha_n$. We call a power series
   $f=\sum_{\alpha}a_\alpha\cdot\ux^\alpha\in\Kx$ 
   \emph{right semi-quasihomogeneous} \label{page:rsqh} \rsqh\  if there is a weight
   vector $w\in\Z_{>0}^n$ such that the principal part
   $\In_w(f)=\sum\limits_{w\cdot\alpha\text{ minimal}}a_\alpha\cdot\ux^\alpha$
   has a finite Milnor number. Since in positive characteristic the
   finiteness of the Milnor number and the Tjurina number are no
   longer equivalent we have to distinguish between semi-quasihomogeneity for right
   and contact equivalence (see also \cite[Sec.~2]{BGM10a}).

   \begin{lemma}\label{lem:singzero}
     Let $f\in\K[\ux]$ be \qh\  w.r.t.\ $w\in\Z_{>0}^n$, then
     $\mu(f)<\infty$ if and only if $0$ is the only zero of $\jj(f)$.
   \end{lemma}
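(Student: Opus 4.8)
The plan is to exploit that $f$, being quasihomogeneous w.r.t.\ the positive weight vector $w$, has quasihomogeneous partial derivatives $f_{x_i}$ (of weighted degree $\deg_w(f)-w_i$), so that the Jacobian ideal is \emph{weighted-homogeneous}. Since all weights $w_i$ are positive, this homogeneity is exactly what lets me pass back and forth between the polynomial ring $\K[\ux]$ and the power series ring $\Kx$, and it is what bridges the a priori global statement (``$0$ is the only zero of $\jj(f)$'', meaning the only common zero of the $f_{x_i}$ in $\K^n$) and the a priori local one ($\mu(f)<\infty$). Throughout I would use the equivalence recalled in the introduction, that $\mu(f)<\infty$ holds if and only if $\m^k\subseteq\jj(f)$ for some $k$, and I write $J=\langle f_{x_1},\dots,f_{x_n}\rangle\subseteq\K[\ux]$ for the Jacobian ideal inside the polynomial ring, so that $\jj(f)=J\cdot\Kx$.

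First I would dispose of the easy implication. Suppose $0$ is the only zero of $J$ in $\K^n$. Since $\K$ is algebraically closed, the Nullstellensatz gives $\sqrt{J}=\langle x_1,\dots,x_n\rangle$ in $\K[\ux]$, hence $\langle x_1,\dots,x_n\rangle^k\subseteq J$ for some $k$. Extending to $\Kx$ yields $\m^k\subseteq J\cdot\Kx=\jj(f)$, and therefore $\mu(f)=\dim_\K(\Kx/\jj(f))\le\dim_\K(\Kx/\m^k)<\infty$. This direction needs nothing beyond the Nullstellensatz and does not even use quasihomogeneity.

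For the converse, assume $\mu(f)<\infty$, so that $\m^k\subseteq\jj(f)=J\cdot\Kx$ for some $k$. Here the key observation is that, because all weights are positive, every weighted-degree-$d$ component of $\Kx$ is finite-dimensional and consists only of polynomials; consequently, for the weighted-homogeneous ideal $J$ one has $(J\cdot\Kx)_d=J_d$ for each $d$ (decompose any relation $\sum h_i f_{x_i}$ into weighted-homogeneous parts and note that each coefficient piece $(h_i)_{d-\deg_w(f_{x_i})}$ is again a polynomial). Now every monomial $\ux^\alpha$ with $|\alpha|\ge k$ lies in $\m^k\subseteq J\cdot\Kx$ and is weighted-homogeneous, so $\ux^\alpha\in J$ already in $\K[\ux]$. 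Hence $\langle x_1,\dots,x_n\rangle^k\subseteq J$, which forces $V(J)\subseteq\{0\}$; since the $f_{x_i}$ have positive weighted degree they vanish at the origin, so $V(J)=\{0\}$.

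The delicate point, and the step I expect to be the main obstacle, is precisely the contraction $(J\cdot\Kx)\cap\K[\ux]=J$ for weighted-homogeneous $J$: it is what could fail without homogeneity, since for a general $f$ the condition $\mu(f)<\infty$ only makes the origin an \emph{isolated} point of $V(J)$, not the sole one. Equivalently, one can phrase the converse geometrically: weighted-homogeneity of the $f_{x_i}$ makes $V(J)$ invariant under the $\K^*$-action $\lambda\cdot q=(\lambda^{w_1}q_1,\dots,\lambda^{w_n}q_n)$, and since every $w_i>0$ each orbit closure contains the origin; finiteness of $\mu(f)$ makes the origin isolated in $V(J)$, so no nonzero orbit can exist and $V(J)=\{0\}$.
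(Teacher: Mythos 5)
Your proof is correct and rests on the same key idea as the paper's: decomposing a relation in $\Kx$ into weighted-homogeneous parts (each of which is a polynomial, since the weights are positive) to contract the Jacobian ideal back to $\K[\ux]$, combined with Hilbert's Nullstellensatz. The only organizational difference is the endgame --- you deduce $V(\jj(f))\subseteq\{0\}$ directly from the inclusion $\langle x_1,\ldots,x_n\rangle^k\subseteq\jj(f)$ in the polynomial ring, whereas the paper first passes to ``$\jj(f)$ has finitely many zeros'' and then invokes the invariance of the zero set under the action $q\mapsto(t^{w_1}q_1,\ldots,t^{w_n}q_n)$ --- which is precisely the argument you sketch yourself as the alternative geometric phrasing.
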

   \begin{proof}
     If a monomial $\ux^\alpha$ is a linear combination of
     the partial derivatives of $f$ in $\Kx$ then we only have to
     consider the suitable weighted homogeneous part, and it actually
     is a linear combination in $\K[\ux]$. Thus $\mu(f)$ is finite if
     and only if $\dim_\K(\K[\ux]/\jj(f))<\infty$. By Hilbert's
     Nullstellensatz the latter is
     equivalent to the fact that $\jj(f)$ has only finitely many zeros
     in $\K^n$. But since $f$ is weighted homogeneous for  each zero
     $q=(q_1,\ldots,q_n)$ of $\jj(f)$ also
     $(t^{w_1}q_1,\ldots,t^{w_n}q_n)$ is a zero of $\jj(f)$ for all
     $t\in\K$. Thus $\jj(f)$ has only finitely many zeros if and only
     if $0$ is the only zero of $\jj(f)$.
   \end{proof}


   \begin{proposition}\label{cor:snnd-rsqh}
     Let $P$ be a $C$-polytope with a single facet $\Delta$ with weight vector
     $w$ and suppose that $f\in\Kx$ has principal part
     $\In_w(f)=\In_\Delta(f)$ w.r.t. $P$.     
     Then $f$ is \SNND\ w.r.t.\ $P$ if and only $f$ is \rsqh\  w.r.t.\
     $w$.

     In particular, if $f$ is \rsqh\ w.r.t.\ $w$ of weighted degree
     $d$, then 
     \begin{displaymath}
       \mu(f)=\left(\frac{d}{w_1}-1\right)\cdot\ldots\cdot\left(\frac{d}{w_n}-1\right).
     \end{displaymath}
   \end{proposition}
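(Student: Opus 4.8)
The plan is to reduce the asserted equivalence to Lemma~\ref{lem:singzero} by a direct analysis of the inner faces of $P$, and then to read off the Milnor number from Theorem~\ref{thm:snnd}.

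First I would make the $C$-polytope explicit. Since $P$ has a single facet $\Delta$ with weight vector $w$, it is the simplex $\Delta=\{\alpha\in\R_{\geq 0}^n\mid w\cdot\alpha=d\}$ with vertices $\tfrac{d}{w_i}\cdot e_i$ on the coordinate axes, where $d$ is the weighted degree. Its only proper faces are the convex hulls of proper subsets of these vertices, each of which lies in a coordinate hyperplane; hence $\Delta$ is the \emph{unique} inner face of $P$. Moreover the hypothesis $\In_w(f)=\In_\Delta(f)$ says precisely that the minimal $w$-degree occurring in $f$ is $d$, i.e.\ that $\supp(f)$ has no point below $P$. Consequently $f$ is \SNND\ w.r.t.\ $P$ if and only if $f$ is \SND\ along $\Delta$.

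Next I would translate \SND\ along $\Delta$ into a statement about $\jj(\In_\Delta(f))$. Writing $g=\In_\Delta(f)=\In_w(f)$, which is \qh\ w.r.t.\ $w$, I claim $f$ is \SND\ along $\Delta$ exactly when $0$ is the only zero of $\jj(g)$. Indeed, if $q\neq 0$ is a zero of $\jj(g)$ and $q_{i}\neq 0$, then the vertex $\tfrac{d}{w_{i}}e_{i}$ lies in $\Delta\cap H_q$, so $\Delta$ meets $H_q$ and \SND\ fails; the only remaining possible zero is $q=0$, for which $H_0=\{0\}$ and $0\notin\Delta$, so the condition is satisfied. By Lemma~\ref{lem:singzero} applied to the quasihomogeneous $g$, the statement ``$0$ is the only zero of $\jj(g)$'' is equivalent to $\mu(g)=\mu(\In_w(f))<\infty$, i.e.\ to $f$ being \rsqh\ w.r.t.\ $w$. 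Chaining these equivalences proves the first assertion. For the Milnor formula I would observe that, given any \rsqh\ $f$ of weighted degree $d$, the simplex $P=\{w\cdot\alpha=d\}\cap\R_{\geq 0}^n$ is a single-facet $C$-polytope with $\In_w(f)=\In_P(f)$, so the equivalence just proved shows $f$ is \SNND\ w.r.t.\ $P$; Theorem~\ref{thm:snnd} then gives $\mu(f)=\mu_N(f)=\mu_N(\Gamma_-(f))$, and it remains to compute this Newton number. For the full simplex $S=\Gamma_-$ with vertices $0$ and $\tfrac{d}{w_i}e_i$ one has $V_k(S)=\tfrac{1}{k!}\sum_{|I|=k}\prod_{i\in I}\tfrac{d}{w_i}$, and substituting into $\mu_N(S)=\sum_{k=0}^n(-1)^{n-k}\,k!\,V_k(S)$ collapses, via the binomial expansion $\prod_i\big(\tfrac{d}{w_i}-1\big)=\sum_{I}(-1)^{n-|I|}\prod_{i\in I}\tfrac{d}{w_i}$, to the claimed product.

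The step I expect to be the real obstacle is the non-convenient case, where some $\tfrac{d}{w_i}$ need not even be an integer and $g$ need not meet the $x_i$-axis, so that $\Gamma_-(f)$ is strictly smaller than $S$ near that axis and $\mu_N(f)$ must be evaluated through the supremum over $f_m=f+x_1^m+\dots+x_n^m$. Here the added pure powers $x_i^m$ push the diagram out along the missing axes, enlarging both $V_{n}$ and the lower-dimensional boundary volumes; the point to verify is that these correction terms cancel in Kouchnirenko's alternating sum, so that $\mu_N(\Gamma_-(f_m))$ already equals $\prod_i\big(\tfrac{d}{w_i}-1\big)$ for all large $m$ and the supremum is attained. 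Establishing this stability of the Newton number under convenient-isation---illustrated e.g.\ by $g=x^p+xy$ in characteristic $p$, where $w=(1,p-1)$, $d=p$ and the formula correctly returns $\mu=1$ despite $d/w_2\notin\Z$---is the only genuinely computational part of the argument.
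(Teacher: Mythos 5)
Your proposal is correct and takes essentially the same route as the paper: the paper likewise identifies $\Delta$ as the only inner face of $P$ (phrased as ``the unique facet $\Delta$ meets all coordinate subspaces except possibly $\{0\}$''), deduces that \SND\ along $\Delta$ is equivalent to $\Sing(\In_\Delta(f))=\{0\}$, invokes Lemma~\ref{lem:singzero} to translate this into $\mu(\In_w(f))<\infty$, i.e.\ into $f$ being \rsqh\ w.r.t.\ $w$, and then obtains the Milnor number formula from Theorem~\ref{thm:snnd}. The only difference is one of detail: where you compute the Newton number of the simplex explicitly and flag the non-convenient case as requiring a cancellation argument in Kouchnirenko's alternating sum, the paper compresses all of this into the remark that $\mu_N(f)$ ``is easily seen to be the product of the $\frac{d}{w_i}-1$''.
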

   \begin{proof}
     Since $P$ is a $C$-polytope the unique facet $\Delta$ meets all
     coordinate subspaces except possibly $\{0\}$. Thus $f$ is \SND\
     along $\Delta$ if and only if $\Sing(\In_\Delta(f))=\{0\}$. By
     Lemma \ref{lem:singzero} this is equivalent to
     $\mu(\In_w(f))=\mu(\In_\Delta(f))<\infty$, i.e.\ that $f$ is
     \rsqh\  w.r.t.\ $w$.

     The formula for $\mu(f)$, first proved by Milnor and Orlik
     \cite{MO70} for isolated \qh\ singularities in characteristic zero,
     follows from Theorem \ref{thm:snnd} since $\mu_N(f)$ is easily
     seen to be the product of the $\frac{d}{w_i}-1$.
   \end{proof}

   \begin{example}
     Generalising Example \ref{ex:nonnd} we consider
     $f=(x+y)^k+xz^{k-1}+z^k$ for some $k\geq 2$ such that $\Char(\K)$
     neither divides $k$ nor $k-1$. Then $f$ is \qh\  w.r.t.\ $w=(1,1,1)$
     and $\Sing(f)=\{0\}$. Thus by Lemma \ref{lem:singzero} and
     Proposition \ref{cor:snnd-rsqh} $f$ is an isolated singularity and \SNND\ with
     $\mu(f)=\mu_N(f)=(k-1)^3$. Note that $f$ is not \NND.
   \end{example}

   \section{Invariants of plane curve singularities}

   In this section $f$ will always be a non-zero power series in the maximal
   ideal $\m=\langle x,y\rangle\subset\K[[x,y]]$.

   For the convenience of the reader we start this section by
   gathering numerical invariants of a
   singularity $f$ respectively numbers associated to the geometry of its
   Newton diagram that will
   be introduced and compared throughout. We will comment on these and their
   relations further down.

   \begin{remark}\label{rem:invarianten}
     Let $0\not=f\in\langle x,y\rangle\subset\K[[x,y]]$ be a power series and suppose that the Newton
     diagram of $f$ has $k$ facets $\Delta_1,\ldots,\Delta_k$.      
     By
     $l(\Delta_i)$ we denote the \emph{lattice length} of
     $\Delta_i$, i.e.\ the number of lattice points on
     $\Delta_i$ minus one. 

     We fix a minimal
     resolution of the singularity computed via successively blowing up points,
     denote by $Q\rightarrow 0$ that $Q$ is an infinitely near point of the
     origin and by $m_Q$ the multiplicity of the strict transform of $f$
     at $Q$. Finally, for
     $m\in\N$ we set $f_m:=f+x^m+y^m$.
     \medskip
     \begin{enumerate}
     \item $\mu(f):=\dim_\K(\K[[x,y]]/\langle f_x,f_y\rangle)$ is the \emph{Milnor
         number} of $f$.
     \item $\delta(f):=\sum_{Q\rightarrow 0} \frac{m_Q\cdot (m_Q-1)}{2}$ is the \emph{delta invariant}
       of $f$. 
     \item $\nu(f):=\sum_{Q\text{ special}} \frac{m_Q\cdot
         (m_Q-1)}{2}$, where an infinitely near point $Q$ is special
       if it is zero or the origin of the corresponding chart of the
       blowing up.
     \item $r(f)$ is the \emph{number of branches} of $f$ counted with
       multiplicity.
     \item If $f$ is convenient, then the \emph{Newton number} of $f$ is
       \begin{displaymath}
         \mu_N(f)=2\cdot V_2\big(\Gamma_-(f)\big)-V_1\big(\Gamma_-(f)\big)+1,
       \end{displaymath}
       and otherwise it is $\mu_N(f)=\sup\{\mu_N(f_m)\;|\;m\in\N\}$.
     \item  If $f$ is convenient, we define
       \begin{displaymath}
         \delta_N(f):=V_2(\Gamma_-(f))-\frac{V_1(\Gamma_-(f))}{2}+\frac{\sum_{i=1}^k l(\Delta_i)}{2},
       \end{displaymath}       
       and otherwise we set $\delta_N(f)=\sup\{\delta_N(f_m)\;|\; m\in\N\}$.
     \item $r_N(f):=\sum_{i=1}^k l(\Delta_i)+
       \max\{j\;|\;x^j\text{ divides }f\}+\max\{l\;|\;y^l\text{
         divides }f\}$.
     \end{enumerate}
   \end{remark}

   Coming back to the different notions of non-degeneracy, 
   a particularly interesting situation is that of plane curve
   singularities. We will end this paper by investigating this case
   more closely. One of the aims is to show that for non-degeneracy the condition
   of \emph{convenience} is often not necessary, even in positive
   characteristic. We now
   elaborate on the conditions \SND\  and \SNND\ in the planar case. 

   \begin{remark}\label{rem:planarnd}
     Let $0\not=f\in\langle x,y\rangle\subset\K[[x,y]]$ and let $P$ be a $C$-polytope such that no
     point in $\supp(f)$ lies below $P$.
     \begin{enumerate}
     \item 
       Then $f$ is \SND\ along an edge $\Delta$  of $P$ if and only if
       \begin{itemize}
       \item all zeros of $\jj(\In_\Delta(f))$ have at least one
         coordinate zero 
         if $\Delta$ does not meet any coordinate axis;
       \item all zeros of $\jj(\In_\Delta(f))$ have $x$-coordinate
         zero 
         if $\Delta$ only meets the $x$-axis;
       \item all zeros of $\jj(\In_\Delta(f))$ have $y$-coordinate
         zero 
         if $\Delta$ only meets the $y$-axis;
       \item the only zero of $\jj(\In_\Delta(f))$ is $(0,0)$ 
         if $\Delta$ meets both axes.
       \end{itemize}
     \item In \cite{Wal99} Wall describes how much the $C$-polytope $P$
       may differ from $\Gamma(f)$ if $f$ is \SNND\ w.r.t.\ $P$:
       \begin{itemize}
       \item each inner vertex of $P$ is a vertex of $\Gamma(f)$;
       \item an edge of $P$ which does not meet a coordinate axis is
         an edge of $\Gamma(f)$;
       \item an edge of $P$ which meets exactly one of the coordinate
         axes is either itself an edge of $\Gamma(f)$ or, replacing the
         point on the coordinate axis by the point on the edge with
         distance one from the coordinate axis, leads to 
         an edge or a vertex of $\Gamma(f)$;
       \item if $P$ consists of a single edge meeting both coordinate axes, then $f$ is
         \rsqh\  w.r.t.\ any weight vector defining this edge (see Prop.\
         \ref{cor:snnd-rsqh}); in particular, the principal part of $f$
         is reduced and unless it is $xy$ the edge $P$ contains an
         edge of the Newton diagram whose end points have distance at
         most one from the corresponding axes.
       \end{itemize}
       Wall gives this characterisation over the complex numbers, but
       it actually holds in the same way in any characteristic.
     \end{enumerate}
   \end{remark}

   It turns out that in the planar situation \NND\ implies \SNND.

   \begin{proposition}\label{prop:nnd-snnd}
     If $0\not=f\in\langle x,y\rangle\subset\K[[x,y]]$ is \NND\ then $f$ is \SNND\ w.r.t.\
     $\Gamma(f)$, i.e.\ $f$ is \SND\ along each inner face of $\Gamma(f)$.
   \end{proposition}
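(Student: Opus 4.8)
The plan is to verify the defining condition of \SNND\ with respect to the $C$-polytope $P=\Gamma(f)$ directly; since $\Gamma(f)$ is by construction the Newton diagram of $f$, no point of $\supp(f)$ lies below it, so by the ``i.e.'' in the statement it only remains to prove that $f$ is \SND\ along every inner face of $\Gamma(f)$. In the planar case the faces of $\Gamma(f)$ are its vertices and its edges, and I would first record that no edge can be contained in a coordinate axis: each axis meets $\Gamma(f)$ in at most the single vertex $m\cdot e_i$, namely the minimal power of $x_i$ occurring in $f$, every larger pure power lying strictly above $\Gamma(f)$ in $\Gamma_+(f)$. Hence the inner faces are exactly the vertices lying off both axes together with all of the edges.

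For any inner face $\Delta$ that meets no coordinate axis---this covers every off-axis vertex and every edge with both endpoints off the axes, since such an edge stays in the open positive quadrant---I would invoke Remark~\ref{rem:nd}(e), by which \ND\ along $\Delta$ and \SND\ along $\Delta$ coincide. As $f$ is \NND\ it is \ND\ along $\Delta$, hence \SND\ along $\Delta$. This settles every inner face except the edges that actually touch a coordinate axis.

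The only delicate case is an edge $\Delta$ one of whose endpoints lies on an axis, say the vertex $m\cdot e_1=(m,0)$ on the $x$-axis. Using the planar description of \SND\ in Remark~\ref{rem:planarnd}(a), I must exclude every zero $q=(q_1,q_2)$ of $\jj(\In_\Delta(f))$ with $q_1\neq 0$. Torus zeros are already ruled out because $f$ is \ND\ along $\Delta$, so the only threat is a zero $q=(q_1,0)$ with $q_1\neq 0$. Here I would use that $(m,0)$ is the unique support point of $\In_\Delta(f)$ on the $x$-axis and that its coefficient $a_{(m,0)}$ is nonzero, so all remaining monomials of $\In_\Delta(f)$ carry a positive power of $y$; differentiating in $x$ and evaluating at $(q_1,0)$ thus leaves precisely $a_{(m,0)}\cdot m\cdot q_1^{m-1}$. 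The decisive input is Remark~\ref{rem:nd}(h): since $f$ is \NND\ and $m\cdot e_1\in\Gamma(f)$, the characteristic of $\K$ does not divide $m$, whence $a_{(m,0)}\cdot m\cdot q_1^{m-1}\neq 0$. So $\partial_x\In_\Delta(f)$ cannot vanish at $(q_1,0)$ and no such bad zero exists; the symmetric argument (exchanging the roles of $x$ and $y$) handles an endpoint on the $y$-axis, and an edge meeting both axes is covered by applying both. Consequently $f$ is \SND\ along every edge touching an axis as well, which completes the verification.

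The main obstacle is pinning down exactly why \NND\ forbids the axis-supported zeros of the Jacobian ideal of an edge form, and everything rests on the non-divisibility statement Remark~\ref{rem:nd}(h)---this is where the full strength of Newton non-degeneracy, namely its condition on the \emph{zero-dimensional} faces and not merely on the edges, actually enters. Were that input unavailable one would reprove it in place: \ND\ along the vertex $\{m\cdot e_1\}$ says that $\jj(a_{(m,0)}x^m)=\langle a_{(m,0)}\,m\,x^{m-1}\rangle$ has no zero in $(\K^*)^n$, and this fails precisely when $\Char(\K)\mid m$.
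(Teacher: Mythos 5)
Your proof is correct and follows essentially the same route as the paper: you split the inner faces into those disjoint from the coordinate axes (where Remark~\ref{rem:nd}(e) makes \ND\ and \SND\ coincide) and edges touching an axis, where you write $\In_\Delta(f)=a\,x^m+y\cdot h$, rule out torus zeros by \ND, and kill the remaining axis zeros via the derivative computation together with Remark~\ref{rem:nd}(h). The paper's proof is identical up to exchanging the roles of $x$ and $y$ (it treats the $y$-axis case explicitly and says ``similarly'' for the other), and it likewise concludes via Remark~\ref{rem:planarnd}.
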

   \begin{proof}
     Let $\Delta$ be any inner face of $\Gamma(f)$. If
     $\Delta$  intersects none of the two coordinate axes, then $f$
     is \SND\ along $\Delta$ since it is \ND\ along $\Delta$ by Remark
     \ref{rem:nd}. 

     If $\Delta$ meets the $y$-axis we have to show that there is no
     zero of $\jj(\In_\Delta(f))$ with non-zero $y$-coordinate. 
     In this situation $\Delta$ is an edge of the Newton diagram
     whose one end point lies on the $y$-axis, i.e.\
     \begin{displaymath}
       \In_\Delta(f)=a\cdot y^k+x\cdot g
     \end{displaymath}
     for some $a\in\K^*$, $k\geq 1$ and $g\in\K[x,y]$. 

     By assumption $\jj(\In_\Delta(f))$ has no zero in $(\K^*)^2$ and
     we have to exclude the possibility that it has a zero
     $q=(0,z)\in\{0\}\times\K^*$. This is the case since
     \begin{displaymath}
       \In_\Delta(f)_y=a\cdot k\cdot y^{k-1}+x\cdot g_y
     \end{displaymath}
     and 
     \begin{displaymath}
       \In_\Delta(f)_y(q)=a\cdot k\cdot z^{k-1}\not=0,
     \end{displaymath}
     where, for the second statement, we note that by Remark \ref{rem:nd}
     $\Char(\K)$ does not divide $k$. 
     Similarly, if $\Delta$ meets the $x$-axis there is no zero of
     $\jj(\In_\Delta(f))$ with non-zero $x$-coordinate.

     Thus $f$ is also \SND\ along any inner face which meets any of
     the two coordinate axes by Remark \ref{rem:planarnd}, and
     altogether we have that $f$ is \SND\ 
     w.r.t.     $\Gamma(f)$. 
   \end{proof}

   Since on each face \SND\ implies \ND, the previous result can be
   rephrased as follows.

   \begin{corollary}
     If $0\not=f\in\langle x,y\rangle\subset\K[[x,y]]$, then the following are equivalent:
     \begin{enumerate}
     \item $f$ is \NND.
     \item $f$ is \SNND\ w.r.t.\ $\Gamma(f)$, and in case $\Gamma(f)$ meets
       the $x$-axis or the $y$-axis then the corresponding coordinate is
       not divisible by $\Char(\K)$.
     \end{enumerate}
   \end{corollary}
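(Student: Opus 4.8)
The plan is to deduce this corollary directly from Proposition~\ref{prop:nnd-snnd} together with Remark~\ref{rem:nd}, proving the two implications separately and keeping careful track of which faces of $\Gamma(f)$ are inner. Throughout I would use that on any face \SND\ implies \ND\ (Remark~\ref{rem:nd}(d)), so the two conditions can differ only on faces touching the coordinate axes.

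For the implication (a)$\Rightarrow$(b) there is almost nothing to do: Proposition~\ref{prop:nnd-snnd} already states that \NND\ forces $f$ to be \SNND\ w.r.t.\ $\Gamma(f)$, so only the coordinate condition remains to be checked. If $\Gamma(f)$ meets the $x$-axis, the intersection point is a vertex of the form $k\cdot e_1$ with $k\geq 1$ (indeed $k\geq 1$ because $f\in\langle x,y\rangle$ has no constant term), and symmetrically for the $y$-axis. Remark~\ref{rem:nd}(h) then says precisely that $\Char(\K)$ cannot divide $k$, which is the asserted condition.

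For (b)$\Rightarrow$(a) I would first set up the combinatorics of $\Gamma(f)$ in the plane. Its faces are exactly its vertices and its edges. Every edge is an inner face, since no edge of the Newton diagram can lie on a coordinate axis (the only axis points of $\Gamma_+(f)$ are the two leftmost/lowest monomials, and the rays along the axes are unbounded, hence not part of $\Gamma(f)$). Thus the only non-inner faces are the at most two vertices sitting on the coordinate axes. For the inner faces, $f$ is \SND\ by the \SNND\ hypothesis and therefore \ND\ by Remark~\ref{rem:nd}(d), so these are settled; it remains to check \ND\ along each axis vertex.

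For an axis vertex, say $\Delta=\{k\cdot e_1\}$ on the $x$-axis, the principal part is $\In_\Delta(f)=a\cdot x^k$ with $a\in\K^*$, so $\jj(\In_\Delta(f))=\langle ak\cdot x^{k-1}\rangle$. The coordinate condition gives $\Char(\K)\nmid k$, whence $ak\neq 0$ in $\K$ and the vanishing locus of this ideal meets the torus $(\K^*)^2$ nowhere; so $f$ is \ND\ along $\Delta$, and the $y$-axis vertex is symmetric. Combining the inner and the non-inner cases gives \ND\ along every face, i.e.\ \NND. The only real obstacle is the bookkeeping in this direction — verifying that every edge is genuinely inner and that the axis vertices are exactly the non-inner faces — after which the argument collapses to Remark~\ref{rem:nd}(d),(h) and the one-line computation on $a\cdot x^k$.
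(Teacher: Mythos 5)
Your proof is correct and takes essentially the same route as the paper, which presents the corollary as a rephrasing of Proposition~\ref{prop:nnd-snnd} via the fact that \SND\ implies \ND\ on each face (Remark~\ref{rem:nd}(d)), with the coordinate condition supplied by Remark~\ref{rem:nd}(h). Your explicit bookkeeping --- that every edge of a planar Newton diagram is an inner face and that the only non-inner faces are the axis vertices, where $\In_\Delta(f)=a\cdot x^k$ is handled by the divisibility hypothesis --- simply fills in the details the paper leaves implicit.
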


   We show now that in the planar case Kouchnirenko's result holds in arbitrary
   characteristic without the assumption that $f$ is convenient.

   \begin{proposition}\label{prop:planarnnd}
     Suppose that $0\not=f\in\langle x,y\rangle\subset\K[[x,y]]$ is \NND, then $\mu(f)=\mu_N(f)$.
   \end{proposition}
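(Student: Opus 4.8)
The plan is to reduce everything to Kouchnirenko's theorem (Theorem~\ref{thm:kouchnirenko}) by artificially making $f$ convenient and then controlling how the Milnor and Newton numbers change. Note first that Theorem~\ref{thm:kouchnirenko} already gives the unconditional inequality $\mu_N(f)\le\mu(f)$, so only a comparison in the other direction is needed, and both numbers are allowed to be infinite. If $f$ is convenient there is nothing to do: $\Gamma(f)$ meets both axes, $f$ is \NND, and Theorem~\ref{thm:kouchnirenko} yields $\mu(f)=\mu_N(f)<\infty$ at once. So I would assume $f$ is not convenient; by the symmetry $x\leftrightarrow y$ I may assume $\Gamma(f)$ misses the $x$-axis, which is equivalent to $y\mid f$ (the case that both axes are missed is handled by carrying out the construction below on both sides at once). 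Write $b\ge 1$ for the largest power of $y$ dividing $f$; geometrically $b$ is the smallest $y$-coordinate occurring in $\supp(f)$, i.e.\ the $y$-coordinate of the lowest vertex $(a,b)$ of $\Gamma(f)$.

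I would first dispose of the degenerate case $b\ge 2$ (and symmetrically the case $x^2\mid f$). Here $y^2\mid f$ forces $f_x,f_y\in\langle y\rangle$, hence $\jj(f)\subseteq\langle y\rangle$ and $\mu(f)=\infty$. On the combinatorial side I would pass to $f_m=f+x^m+y^m$ and apply the area formula of Remark~\ref{rem:invarianten}(e): for large $m$ the lowest vertex $(a,b)$ is joined to $(m,0)$, so $V_2(\Gamma_-(f_m))=\tfrac12 mb+O(1)$ while $V_1(\Gamma_-(f_m))=m+O(1)$, giving $\mu_N(\Gamma_-(f_m))=(b-1)m+O(1)\to\infty$. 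Thus $\mu_N(f)=\sup_m\mu_N(\Gamma_-(f_m))=\infty=\mu(f)$, as required.

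The substantial case is $b=1$ (so $y\mid f$ but $y^2\nmid f$). Here I would fix an integer $m$ with $\Char(\K)\nmid m$, large enough in a sense made precise below, and pass to the convenient series $f_m=f+x^m+y^m$. The key claim is that $f_m$ is again \NND. Along the faces of $\Gamma(f)$ that persist in $\Gamma(f_m)$ the principal parts are unchanged for $m\gg 0$, so non-degeneracy is inherited from $f$; along the new vertex $\{(m,0)\}$ one has $\In(f_m)=x^m$ with $\Char(\K)\nmid m$, which is \ND; and the decisive point is the new edge $\Delta'$ joining $(a,1)$ to $(m,0)$, where $\In_{\Delta'}(f_m)=c\,x^a y+x^m$ with $c\ne 0$. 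Its $y$-derivative is $c\,x^a$, a unit on the torus, so $\jj(\In_{\Delta'}(f_m))$ has no zero in $(\K^*)^2$ and $f_m$ is \ND\ along $\Delta'$. This is exactly where the hypothesis $b=1$ enters: if $b\ge 2$ the monomial $c\,x^a y^b$ has vanishing $y$-derivative along $\{y=0\}$ and the new edge acquires torus-critical points. Hence $f_m$ is \NND\ and convenient. It then remains to transfer numbers: by Theorem~\ref{thm:kouchnirenko} we get $\mu(f_m)=\mu_N(f_m)=\mu_N(\Gamma_-(f_m))<\infty$, a value which an exact computation shows is independent of $m$ for all large $m$. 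Since $\mu(f_m)<\infty$, Corollary~\ref{cor:finitedeterminacybound} shows $f_m$ is right $(2\mu(f_m)-\ord(f_m)+2)$-determined, and as $\mu(f_m)$ and $\ord(f_m)=\ord(f)$ stabilise this is a fixed finite number $K$. Choosing $m>K$, we have $f-f_m=-x^m-y^m\in\m^{m}\subseteq\m^{K+1}$, so $f$ and $f_m$ share the same $K$-jet, whence $f\sim_r f_m$ and $\mu(f)=\mu(f_m)$. Combining,
\begin{displaymath}
  \mu_N(\Gamma_-(f_m))\le\mu_N(f)\le\mu(f)=\mu(f_m)=\mu_N(f_m)=\mu_N(\Gamma_-(f_m)),
\end{displaymath}
which forces $\mu(f)=\mu_N(f)$.

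I expect the main obstacle to be the careful verification that $f_m$ is genuinely \NND\ along \emph{every} face of $\Gamma(f_m)$: one must check that for $m\gg 0$ no stray monomial of $f$ lands on the new edge $\Delta'$ and that the surviving faces really do retain their principal parts, and then combine this with a bookkeeping argument showing the determinacy bound $K$ can be chosen independently of $m$, so that the choice $m>K$ is not circular. By contrast, the infinite case is comparatively soft, resting only on the divisibility $y^2\mid f$ and the elementary area asymptotics.
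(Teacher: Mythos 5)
Your proof is correct, but it takes a genuinely different route from the paper's. The paper never makes $f$ convenient: it keeps $f$ fixed, prolongs the two extreme edges of $\Gamma(f)$ to the coordinate axes to obtain a $C$-polytope $P$ (this is where the same reduction you make -- extreme vertices at distance at most $1$ from the axes, else both numbers are infinite -- appears), and then shows that \NND\ implies \SNND\ w.r.t.\ $P$; the degenerate behaviour along the prolonged edge is excluded by the computation $\In_\Delta(f)_x=a\cdot y^l+x\cdot(2g+x\cdot g_x)$, which is the exact counterpart of your observation that $\partial_y(c\,x^ay+x^m)=c\,x^a$ has no torus zero. The conclusion then comes from Theorem~\ref{thm:snnd}, the positive-characteristic version of Wall's theorem. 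Your argument instead stays entirely within Kouchnirenko's Theorem~\ref{thm:kouchnirenko}: perturb to $f_m=f+x^m+y^m$ with $\Char(\K)\nmid m$, verify that \NND\ survives, and transfer $\mu$ back via the explicit bound of Corollary~\ref{cor:finitedeterminacybound}. Finite determinacy thus enters your proof explicitly, whereas in the paper it enters only implicitly, through the proof of Theorem~\ref{thm:snnd} (which rests on Theorem~\ref{thm:fd=isolated}). What you gain is independence from the \SNND\ machinery and Wall's theorem; what you pay is the $m$-bookkeeping (stabilisation of $\mu_N(\Gamma_-(f_m))$, and a choice of $m$ simultaneously large and prime to the characteristic), which the paper's static argument never needs. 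Your closing chain of inequalities, using $\mu_N(f)=\sup_m\mu_N(\Gamma_-(f_m))$ and Kouchnirenko's unconditional inequality $\mu_N\le\mu$, does close correctly.

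Two minor caveats, neither fatal. First, Corollary~\ref{cor:finitedeterminacybound} is stated for $0\not=f\in\m^2$, so you must dispose of the smooth case $f\in\m\setminus\m^2$ separately (both numbers are then $0$); the paper makes the same reduction, "we may assume $f\in\m^2$", in its first line. Second, your parenthetical explanation of where $b=1$ enters is not accurate: for $b\ge 2$ with $\Char(\K)\nmid b$ the new edge still carries $\In_{\Delta'}(f_m)=c\,x^ay^b+x^m$ with $\partial_y=cb\,x^ay^{b-1}$ having no torus zero, so $f_m$ can perfectly well be \NND. What actually breaks down for $b\ge 2$ is the stabilisation: $\mu_N(\Gamma_-(f_m))$ grows like $(b-1)m$, so the determinacy bound $K$ can never be outrun by $m$ and the transfer argument becomes circular. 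Since you treat $b\ge 2$ separately (and correctly) anyway, this misattribution affects only the motivation, not the validity of the proof.
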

   \begin{proof}
     We may assume that $f\in\m^2$. Moreover,
     if $\Gamma(f)$ consists of a single point $\alpha$ then either
     $\alpha=(1,1)$ with $\mu(f)=\mu_N(f)=1$ or
     $\mu(f)=\mu_N(f)=\infty$. We thus also may assume that
     $\Gamma(f)$ has at least one edge.
     
     Let $\alpha=(k,l)$ be the end point of $\Gamma(f)$ closest to the
     $y$-axis, and suppose that $k\geq 2$, then $\mu_N(f)=\infty$ and
     by Theorem \ref{thm:kouchnirenko} also $\mu(f)=\infty$. Thus we
     may assume that either $\alpha$ is on the $y$-axis or its distance $k$ to the
     $y$-axis is one. Similarly, we may assume that the end point of
     $\Gamma(f)$ closest to the $x$-axis has distance at most one from
     the $x$-axis.

     Note that there is a unique $C$-polytope $P$ which contains $\Gamma(f)$ and
     which has the same number of edges. It is derived from
     $\Gamma(f)$ by prolonging the obvious edges to the coordinate
     axes. We want to show that $f$ is \SNND\ w.r.t.\ $P$. 

     Let $\Delta$ be an inner face of $P$. If $\Delta$ does not meet
     any of the coordinate axes, then $\Delta$ is a face of
     $\Gamma(f)$ and condition \ND\ implies that $f$ is also \SND\
     along $\Delta$. 

     If $\Delta$ meets the $y$-axis, then it prolongs
     the edge $\Delta'$ of $\Gamma(f)$ whose end point closest to the $y$-axis
     is $\alpha=(k,l)$ with $k\leq 1$. If $k=0$ then $\Delta=\Delta'$ is an
     edge of $\Gamma(f)$ and we can see as in the proof of Proposition
     \ref{prop:nnd-snnd} that $\jj(\In_\Delta(f))$ has no zero 
     with non-zero $y$-coordinate. If $k=1$ then
     \begin{displaymath}
       \In_\Delta(f)=\In_{\Delta'}(f)=a\cdot x\cdot y^l+x^2\cdot g
     \end{displaymath}
     for some $a\in\K^*$ and some $g\in \K[x,y]$. Since
     $f$ satisfies \ND\ along $\Delta'$ there is no point 
     $q\in\Sing(\In_\Delta(f))$ with both coordinates non-zero, and
     since 
     \begin{displaymath}
       \In_\Delta(f)_x=a\cdot y^l+x\cdot (2g+x\cdot g_x)
     \end{displaymath}
     there can also be no point $q\in\Sing(\In_\Delta(f))$ with only
     the $y$-coordinate non-zero. Thus, in any case we see that
     $\jj(\In_\Delta(f))$ has no zero with a non-zero
     $y$-coordinate. 
     Similarly, if $\Delta$ meets the $x$-axis there is no zero of $\jj(\In_\Delta(f))$
     with a non-zero $x$-coordinate. 

     Again by Remark \ref{rem:planarnd} $f$ is \SND\ along each inner
     face of $P$ which meets any of the coordinate axes, and thus
     altogether $f$ is \SNND. Theorem \ref{thm:snnd} implies
     that $\mu(f)=\mu_N(f)$.
   \end{proof}

   \begin{example}
     We now give an example for a reduced power series which is not
     \SNND\ w.r.t.\ any $C$-polytope.
     Let $f=x^6+y^3+x^5y\in\K[[x,y]]$
     with $\Char(\K)=2$ and suppose that $f$ is \SNND\ w.r.t.\ some
     $C$-polytope $P$. By Remark~\ref{rem:planarnd} $P$ must be the
     Newton diagram of $f$ and by Proposition \ref{cor:snnd-rsqh} $f$ is then
     \rsqh\  in contradiction to $\mu(\In_P(f))=\mu(x^6+y^3)=\infty$. Note
     that $\mu(f)=13>10=\mu_N(f)$.
     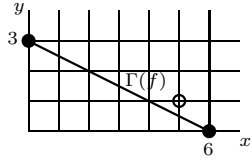
\begin{figure}[h]
       \centering
       \setlength{\unitlength}{0.4mm}
       \begin{picture}(70,50)(0,0)
         \multiput(0,0)(10,0){7}{\line(0,1){40}}
         \multiput(0,0)(0,10){4}{\line(1,0){70}}
         \linethickness{0.1mm}\scriptsize
         \thicklines\drawline[12](60,0)(0,30)
         \put(60,0){\circle*{4}}
         \put(0,30){\circle*{4}}
         \put(50,10){\circle{4}}
         \put(58,-8){$6$}
         \put(-7,29){$3$}
         \put(70,-5){$x$}
         \put(-5,40){$y$}
         \put(32,15){$\Gamma(f)$}
       \end{picture}       
       \caption{The Newton diagram of $f=x^6+y^3+x^5y$.}
       \label{fig:snnd}
     \end{figure}
   \end{example}

   We are now going to prove that Milnor's formula
   $\mu(f)=2\cdot\delta(f)-r(f)+1$ holds if $f$ is \NND.
   Beelen and Pellikaan investigate in \cite{BP00} plane curve
   singularities in arbitrary characteristic, and under the assumption
   of convenience and weak Newton non-degeneracy they give a formula
   for \lang{the delta   invariant} $\delta(f)$ \lang{of $f$} in terms of the Newton diagram. We generalise
   this by dropping the condition of convenience. Moreover, we show
   that if $f$ is \WND\ along an edge $\Delta$ of
   $\Gamma(f)$ of lattice length $k$, then there are exactly $k$
   branches of $f$ corresponding to $\Delta$. Combining these  results Milnor's
   formula with the Newton number instead of the Milnor number follows
   in arbitrary characteristic.
   
   \medskip

   \lang{If $f\in\K[[x,y]]$ is convenient and $\Delta_1,\ldots,\Delta_k$ are
     the facets of the Newton diagram $\Gamma(f)$, then we define
     \begin{displaymath}
       \delta_N(f):=V_2(\Gamma_-(f))-\frac{V_1(\Gamma_-(f))}{2}+\frac{\sum_{i=1}^k l(\Delta_i)}{2},
     \end{displaymath}
     where $l(\Delta_i)$ is the lattice length of $\Delta_i$, i.e.\ one
     less than the number of lattice points on $\Delta_i$. If $f$ is not
     convenient, we generalise this definition to
     \begin{displaymath}
       \delta_N(f):=\sup\{\delta_N(f_m)\;|\; f_m=f+x^m+y^m, m\in\N\}.
     \end{displaymath}
   }
   \kurz{In the following we study the numbers $\delta_N$, $r_N$ and
     $\mu_N$ which depend only on the Newton diagram and compare them
     with the singularity invariants $\nu$, $\delta$, $r$ and $\mu$
     (see Remark \ref{rem:invarianten}).}
   
   \begin{example}\label{ex:nuN}
     If $f=x^4y+x^2y^2+y^5$ and $m\geq 6$, then the Newton diagram of
     $f_m$ has three facets $\Delta_1,\Delta_2,\Delta_3$ of lattice
     length one (see Figure \ref{fig:nu}).
     We thus get
     \begin{align*}
       \delta_N(f_m)=&\;V_2(\Gamma_-(f_m))-\frac{V_1(\Gamma_-(f_m))}{2}+\frac{l(\Delta_1)+l(\Delta_2)+l(\Delta_3)}{2}\\=&
       \;\left(10+\frac{m-4}{2}\right)-\frac{5+4+(m-4)}{2}+\frac{1+1+1}{2}=7,
     \end{align*}
     where the $\frac{m-4}{2}$ corresponds to both, the area of the
     gray triangle in Figure \ref{fig:nu} and half the length of the
     intersection of this triangle with the $x$-axis. We thus have 
     \begin{displaymath}
       \delta_N(f)=\delta_N(f_6)=7.
     \end{displaymath}
     \begin{figure}[h]
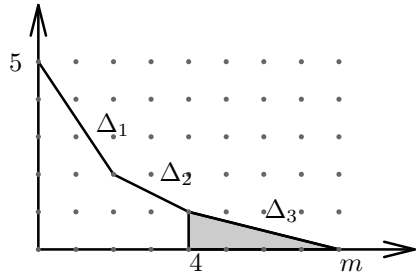

       \centering
       \begin{texdraw}
         \arrowheadtype t:V
         \drawdim cm \relunitscale 0.5 \linewd 0.06 
         \move (4 0) \lvec (4 1) \lvec (8 0) \lvec (4 0) \lfill f:0.8
         \move (0 0) \avec (10 0)
         \move (0 0) \avec (0 6.5)
         \move (0 5) \lvec (2 2) \lvec (4 1) \lvec (8 0)
         \setgray 0
         \htext (-0.8 4.7){$5$}
         \htext (4 -0.6){$4$}
         \htext (8 -0.6){$m$}
         \htext (1.5 3){$\Delta_1$}         
         \htext (3.2 1.7){$\Delta_2$}         
         \htext (6 0.7){$\Delta_3$}         
         \move (0 0) \fcir f:0.4 r:0.06 
         \move (1 0) \fcir f:0.4 r:0.06 
         \move (2 0) \fcir f:0.4 r:0.06 
         \move (3 0) \fcir f:0.4 r:0.06 
         \move (4 0) \fcir f:0.4 r:0.06 
         \move (5 0) \fcir f:0.4 r:0.06 
         \move (6 0) \fcir f:0.4 r:0.06 
         \move (0 1) \fcir f:0.4 r:0.06 
         \move (1 1) \fcir f:0.4 r:0.06 
         \move (2 1) \fcir f:0.4 r:0.06 
         \move (3 1) \fcir f:0.4 r:0.06 
         \move (4 1) \fcir f:0.4 r:0.06 
         \move (5 1) \fcir f:0.4 r:0.06 
         \move (6 1) \fcir f:0.4 r:0.06 
         \move (0 2) \fcir f:0.4 r:0.06 
         \move (1 2) \fcir f:0.4 r:0.06 
         \move (2 2) \fcir f:0.4 r:0.06 
         \move (3 2) \fcir f:0.4 r:0.06 
         \move (4 2) \fcir f:0.4 r:0.06 
         \move (5 2) \fcir f:0.4 r:0.06 
         \move (6 2) \fcir f:0.4 r:0.06 
         \move (0 3) \fcir f:0.4 r:0.06 
         \move (1 3) \fcir f:0.4 r:0.06 
         \move (2 3) \fcir f:0.4 r:0.06 
         \move (3 3) \fcir f:0.4 r:0.06 
         \move (4 3) \fcir f:0.4 r:0.06 
         \move (5 3) \fcir f:0.4 r:0.06 
         \move (6 3) \fcir f:0.4 r:0.06 
         \move (0 4) \fcir f:0.4 r:0.06 
         \move (1 4) \fcir f:0.4 r:0.06 
         \move (2 4) \fcir f:0.4 r:0.06 
         \move (3 4) \fcir f:0.4 r:0.06 
         \move (4 4) \fcir f:0.4 r:0.06 
         \move (5 4) \fcir f:0.4 r:0.06 
         \move (6 4) \fcir f:0.4 r:0.06 
         \move (0 5) \fcir f:0.4 r:0.06 
         \move (1 5) \fcir f:0.4 r:0.06 
         \move (2 5) \fcir f:0.4 r:0.06 
         \move (3 5) \fcir f:0.4 r:0.06 
         \move (4 5) \fcir f:0.4 r:0.06 
         \move (5 5) \fcir f:0.4 r:0.06 
         \move (6 5) \fcir f:0.4 r:0.06 
         \move (7 0) \fcir f:0.4 r:0.06 
         \move (7 1) \fcir f:0.4 r:0.06 
         \move (7 2) \fcir f:0.4 r:0.06 
         \move (7 3) \fcir f:0.4 r:0.06 
         \move (7 4) \fcir f:0.4 r:0.06 
         \move (7 5) \fcir f:0.4 r:0.06 
         \move (8 0) \fcir f:0.4 r:0.06 
         \move (8 1) \fcir f:0.4 r:0.06 
         \move (8 2) \fcir f:0.4 r:0.06 
         \move (8 3) \fcir f:0.4 r:0.06 
         \move (8 4) \fcir f:0.4 r:0.06 
         \move (8 5) \fcir f:0.4 r:0.06 
       \end{texdraw}
       \caption{The Newton diagram of $x^6+x^2y^2+y^5$.}
       \label{fig:nu}
     \end{figure}     
   \end{example}

   The number $\delta_N(f)$ is related to the delta invariant of $f$. If
   we consider a minimal resolution of the singularity computed via
   successive blowing up and denote by
   $Q\rightarrow 0$ that $Q$ is an infinitely near point of the
   origin, then 
   \lang{we know that (see \cite[Prop.~3.34]{GLS07})
     \begin{displaymath}
       \delta(f)=\sum_{Q\rightarrow 0} \frac{m_Q\cdot (m_Q-1)}{2},
     \end{displaymath}
     where $m_Q$ denotes the multiplicity of the strict transform of $f$
     at $Q$. Beelen and Pellikaan introduced the number
     \begin{displaymath}
       \nu(f):=\sum_{Q\text{ special}}\frac{m_Q\cdot(m_Q-1)}{2}\leq \delta(f),
     \end{displaymath}
     where an infinitely near point is \emph{special} if it is $0$ or the
     origin in the corresponding chart of the blowing up procedure. }
   \kurz{the inequality $\nu(f)\leq\delta(f)$ follows from the definition of $\nu$
     and $\delta$.}
   Clearly, $\nu(f)$ depends on the coordinates of $f$, while
   $\delta(f)$ does not. 
   Beelen and Pellikaan show that $\nu(f)$
   and $\delta_N(f)$ coincide if $f$ is convenient. Using our
   generalisation of $\delta_N$, the condition of
   convenience can be dropped.

   \begin{lemma}\label{lem:nuN}
     If $0\not=f\in\langle x,y\rangle\subset\K[[x,y]]$,
     then $\nu(f)=\delta_N(f)$.
   \end{lemma}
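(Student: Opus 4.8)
The plan is to reduce the statement to the convenient case, which is precisely the theorem of Beelen and Pellikaan \cite{BP00}: if $g\in\langle x,y\rangle$ is convenient, then $\nu(g)=\delta_N(g)$. To bring this to bear on an arbitrary, possibly non-convenient $f$, I would use the auxiliary series $f_m=f+x^m+y^m$ that already appear in the definitions of $\delta_N$ and $\mu_N$. The support of $f_m$ meets both coordinate axes, so $f_m$ is convenient for all $m\gg 0$, and the convenient case gives
\[
  \delta_N(f_m)=\nu(f_m)\qquad\text{for }m\gg 0.
\]
Since $\delta_N(f)=\sup\{\delta_N(f_m)\mid m\in\N\}$ by definition, the whole assertion reduces to the resolution-theoretic identity $\sup_m\nu(f_m)=\nu(f)$.

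First I would note that $f$ and $f_m$ share the same $(m-1)$-jet, and that the extra terms $x^m$ and $y^m$ affect only the two ends of the Newton diagram near the coordinate axes. The multiplicity $m_Q$ of a strict transform at a special point $Q$---the origin of one of the charts in the blowing-up---is determined by a bounded jet of the strict transform, and since each blow-up raises orders, for any fixed number of blow-ups there is a threshold beyond which the strict transforms of $f$ and $f_m$ agree at all special points carrying multiplicity $\ge 2$. Making this precise, I would show that $\nu(f_m)$ is non-decreasing in $m$ and converges to $\nu(f)$: for reduced $f$ the sequence is eventually constant equal to $\nu(f)$, while for non-reduced $f$ both $\nu(f_m)$ and $\nu(f)$ tend to $\infty$. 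In either case $\sup_m\nu(f_m)=\nu(f)$, which combined with the first paragraph yields $\delta_N(f)=\nu(f)$.

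The point that requires care is that, as $m$ grows, the diagram $\Gamma(f_m)$ itself does \emph{not} stabilise---its two facets running out to the axes keep changing---so the eventual constancy of $\nu(f_m)$ is invisible on the diagram and only the combination $V_2-\tfrac12V_1+\tfrac12\sum_i l(\Delta_i)$ settles down. Accordingly, the main obstacle is exactly the comparison of the two resolutions: proving that adding the high-order terms $x^m+y^m$ leaves the multiplicities at all special points unchanged once $m$ is large, and that these contributions accumulate monotonically to $\nu(f)$. Everything else is bookkeeping with the definitions together with the direct appeal to the convenient case of \cite{BP00}.
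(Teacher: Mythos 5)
Your proposal has the same skeleton as the paper's proof --- reduce to the convenient case of Beelen--Pellikaan \cite[Thm.~3.11]{BP00} via $f_m=f+x^m+y^m$, observe that $\delta_N(f_m)$ stabilises, and compare $\nu(f)$ with $\nu(f_m)$ --- but the way you carry out this last comparison has a genuine gap. Your dichotomy ``for reduced $f$ the sequence $\nu(f_m)$ is eventually constant equal to $\nu(f)$, while for non-reduced $f$ both $\nu(f_m)$ and $\nu(f)$ tend to $\infty$'' is false in its second branch. Since $\nu$ only records multiplicities at \emph{special} infinitely near points (the origins of the charts of the blow-ups), a multiple branch whose infinitely near points avoid the special points contributes only finitely to $\nu$. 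For instance, $f=(x+y-xy)^2$ is the double of the smooth branch $y=-x-x^2-x^3-\cdots$; at every level of the blow-up tree its strict transform meets the exceptional curve in the point with coordinate $\pm 1\neq 0$ of the chart, hence a non-special point, so $\nu(f)=1<\infty$ in any characteristic. Multiplying by $y$ gives a non-convenient non-reduced example $f=y\cdot(x+y-xy)^2$ with $\nu(f)=3<\infty$ and $\delta_N(f)=3$; here $f_m$ is convenient with $\delta_N(f_m)=3$, so by the very convenient case you invoke, $\nu(f_m)=3$ for all large $m$ and neither quantity tends to $\infty$. For such $f$ your proof offers no argument at all: the reduced-case argument needs a finite minimal resolution, which does not exist, and the non-reduced-case claim is simply wrong. (The asserted monotonicity of $\nu(f_m)$ in $m$ is also unproved, though not needed.)

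The correct exceptional case --- and it is exactly the one the paper isolates first --- is not non-reducedness but ``$x^2$ or $y^2$ divides $f$'': a multiple \emph{coordinate-axis} branch passes through a special point at \emph{every} level of the blow-up tree, so precisely then $\nu(f)=\delta_N(f)=\infty$ automatically. For all remaining $f$, reduced or not, the paper argues that adding $x^m$ (resp.\ $y^m$) merely replaces the smooth branch $y$ (resp.\ $x$) by a smooth branch with the same tangent direction and contact of order about $m$, so all multiplicities at special points, and hence $\nu$, are unchanged: $\nu(f)=\nu(f_m)$ for $m\gg 0$. If you prefer to keep your jet-propagation argument, it can be repaired without any reduced/non-reduced split: if $\nu(f)<\infty$, only finitely many special points carry multiplicity $\geq 2$, beyond that depth the strict transform of $f$ is smooth or absent at every special point, this property is inherited by $f_m$ for $m\gg 0$ and persists under further blow-ups, so both tails contribute $0$ and $\nu(f_m)=\nu(f)$; if $\nu(f)=\infty$, jet agreement up to arbitrary depth already forces $\sup_m\nu(f_m)=\infty$. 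That intermediate statement is what your sketch is missing.
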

   \begin{proof}
     If $x^2$ or $y^2$ divides $f$ then both numbers are infinite, so
     we may assume that this is not the case.

     If $y$ divides $f$ then passing from $f$ to $f+x^m$ for some
     large $m$ replaces the smooth branch $y$ by some other smooth
     branch with the same tangent direction, and the analogous
     argument holds if $x$ divides $f$. Therefore, $\nu(f)=\nu(f_m)$
     for sufficiently large $m$. Moreover, as in Example \ref{ex:nuN}
     the values of $\delta_N(f_m)$ stabilise for sufficiently large $m$,
     since the area that is added in the computation of
     $V_2(\Gamma_-(f_m))$ coincides with the length that is subtracted
     in the computation of $V_1(\Gamma_-(f_m))$. Using 
     \cite[Thm.~3.11]{BP00} we can summarise that
     for a sufficiently large $m$
     \begin{displaymath}
       \delta_N(f)=\delta_N(f_m)=\nu(f_m)=\nu(f).
     \end{displaymath}
   \end{proof}

   One would like to know under which conditions $\nu(f)$ actually coincides
   with $\delta(f)$, and Beelen and Pellikaan show in
   \cite[Prop.~3.17]{BP00} that for a convenient $f$ weak Newton
   non-degeneracy is a sufficient condition to assure this. Again
   we can  drop the condition of convenience. 

   \begin{proposition}\label{prop:nudelta}
     For $0\not=f\in\langle x,y\rangle$ we have $\delta_N(f)\leq\delta(f)$, and if $f$ is \WNND\ then $\delta_N(f)=\delta(f)$.
   \end{proposition}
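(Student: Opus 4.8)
The first inequality is immediate from what is already available, and I would dispatch it before touching the non-degeneracy hypothesis. Lemma~\ref{lem:nuN} identifies $\delta_N(f)=\nu(f)$, and since the special infinitely near points form a subset of all infinitely near points $Q\rightarrow 0$, the defining sums give $\nu(f)\leq\delta(f)$; hence $\delta_N(f)=\nu(f)\leq\delta(f)$ with no further work. For the equality under the assumption that $f$ is \WNND\ I would first dispose of the degenerate case: if $x^2$ or $y^2$ divides $f$, then $f$ is non-reduced, so $\delta(f)=\infty$, while $\delta_N(f)=\nu(f)=\infty$ by Lemma~\ref{lem:nuN}, and both sides agree. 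I may therefore assume $f$ reduced.

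The core of the plan is to reduce to the convenient case treated by Beelen and Pellikaan in \cite[Prop.~3.17]{BP00}, using the perturbation $f_m=f+x^m+y^m$ for $m\gg 0$, and to set up three comparisons. First, $\delta_N(f)=\delta_N(f_m)$ for large $m$: this is the stabilisation already observed in Example~\ref{ex:nuN} and in the proof of Lemma~\ref{lem:nuN}, where the area added to $V_2(\Gamma_-(f_m))$ is cancelled by the length subtracted from $V_1(\Gamma_-(f_m))$; equivalently $\delta_N(f_m)=\nu(f_m)=\nu(f)=\delta_N(f)$ via Lemma~\ref{lem:nuN}. Second, and this is where I would exploit the machinery of the present paper, a reduced plane curve has an isolated singularity, so $\tau(f)<\infty$ and $f$ is finitely contact determined by Theorem~\ref{thm:fd=isolated}; since $\delta$ is an invariant of the contact class, i.e.\ of the analytic algebra $R_f$, choosing $m$ larger than the contact determinacy gives $f\sim_c f_m$ and hence $\delta(f)=\delta(f_m)$.

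It then remains to check that $f_m$ is convenient and \WNND, so that \cite[Prop.~3.17]{BP00} yields $\delta(f_m)=\nu(f_m)=\delta_N(f_m)$; chaining the three comparisons gives $\delta(f)=\delta(f_m)=\delta_N(f_m)=\delta_N(f)$, as desired. For reduced $f$ this verification should be clean. If $f$ is already convenient on, say, the $x$-axis side, then adding $x^m$ changes nothing near that axis for large $m$; if not, then $y\mid f$ and, $f$ being reduced, the vertex of $\Gamma(f)$ closest to the $x$-axis sits at height exactly $1$, say at $(a,1)$. The only new facet on that side joins $(a,1)$ to $(m,0)$; having endpoints of heights $1$ and $0$ it contains no interior lattice point, so its principal part is $c_1 x^a y+c_2 x^m$ with $\partial_y=c_1x^a\neq 0$ on $(\K^*)^2$, whence $f_m$ is \WND\ along it. The facets of $\Gamma(f)$ not meeting the axes persist unchanged in $\Gamma(f_m)$ for large $m$ and inherit weak non-degeneracy from the hypothesis that $f$ is \WNND; the analogous analysis applies on the $y$-axis side.

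The step I expect to require the most care is this last verification that the perturbation simultaneously produces a convenient series and preserves weak Newton non-degeneracy — in particular the observation that reducedness forces the extremal vertices to lie at height one, which is exactly what makes the newly created axis-facets automatically \WND. Everything else is bookkeeping with the already established Lemma~\ref{lem:nuN}, Theorem~\ref{thm:fd=isolated}, and the convenient case \cite[Prop.~3.17]{BP00}.
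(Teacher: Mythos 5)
Your proof follows the paper's skeleton almost exactly: both pass to $f_m$ for $m\gg 0$, verify that $f_m$ is convenient and \WNND\ (your computation of the initial form $c_1x^ay+c_2x^m$ along the new facet and of its $y$-derivative is precisely the paper's argument with $\In_\Delta(f_m)=x^m+c\cdot x^ky$), and then chain $\delta(f)=\delta(f_m)=\nu(f_m)=\delta_N(f_m)=\delta_N(f)$ through \cite[Prop.~3.17]{BP00}, Lemma~\ref{lem:nuN} and the stabilisation of $\delta_N(f_m)$. The one place where you genuinely diverge is the link $\delta(f)=\delta(f_m)$: the paper argues geometrically that adding $x^m$ merely replaces the smooth branch $y$ by another smooth branch with the same tangent direction, so $\delta$ is unchanged, whereas you invoke $\tau(f)<\infty$, finite contact determinacy via Theorem~\ref{thm:fd=isolated}, the contact equivalence $f\sim_c f_m$ for $m$ beyond the determinacy bound, and the contact invariance of $\delta$. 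Where it applies, your route is arguably cleaner, since it replaces an unproved branch-matching assertion by the paper's own determinacy theorem.

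However, your route has a genuine gap: after excluding $x^2\mid f$ and $y^2\mid f$ you write ``I may therefore assume $f$ reduced'', and that implication is false. The series $f=y(x+y)^2$ is divisible by neither $x^2$ nor $y^2$ but is not reduced; for such $f$ one has $\tj(f)\subseteq\langle x+y\rangle$, hence $\tau(f)=\infty$, Theorem~\ref{thm:fd=isolated} gives you nothing, and in fact $\delta(f)=\infty$ while $\delta_N(f)<\infty$. This last observation shows the issue is not cosmetic: non-reduced $f$ with a non-monomial repeated factor would be an actual counterexample to the proposition, so any correct proof must use the \WNND\ hypothesis to exclude them, and your reduction cannot be had for free. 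The missing argument is: if $h^2\mid f$ with $h$ irreducible and not a monomial times a unit, pick $w\in\Z_{>0}^2$ such that $\In_w(h)$ is not a monomial; then $\In_w(f)=\In_w(h)^2\cdot\In_w(f/h^2)$ is not a monomial, hence equals $\In_\Delta(f)$ for some facet $\Delta$ of $\Gamma(f)$; since $\In_w(h)$ is a non-monomial quasihomogeneous polynomial over the algebraically closed field $\K$, it has a zero $q\in(\K^*)^2$ (factor out the monomial content), and at $q$ both $\In_\Delta(f)$ and its two partial derivatives vanish, as every term contains the factor $\In_w(h)$; thus $q$ is a torus zero of $\tj(\In_\Delta(f))$, contradicting \WND\ along $\Delta$. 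With this lemma inserted your reduction to reduced $f$ is sound. A further, smaller point you should at least remark on: the fact that a \emph{reduced} $f$ has $\tau(f)<\infty$ in positive characteristic is used but nowhere stated in the paper; it does hold over the (perfect) field $\K$, because an irreducible factor $h$ dividing both $h_x$ and $h_y$ would, by Weierstrass division, satisfy $h_x=h_y=0$ and hence be a $p$-th power.
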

   \begin{proof}
     If $f$ is divisible by $x^2$ or $y^2$, then all of these numbers
     are infinite, so we may exclude this case. Moreover,
     we may restrict to the case that $y$ divides $f$ but $x$ does not, as
     the remaining cases work analogously.
     As above, passing from $f$ to $f+x^m$ for
     a large $m$ replaces the smooth branch $y$ by some smooth branch
     with the same tangent direction, so the delta invariant does not
     change. Moreover, if $m$ is sufficiently large then $\Gamma(f)$
     differs from $\Gamma(f_m)$ by one additional facet $\Delta$, a line
     segment with end points $(m,0)$ and $(k,1)$. The initial form
     along $\Delta$ is $\In_\Delta(f_m)=x^m+c\cdot x^ky$ and
     $\jj(x^m+c\cdot x^ky)$ has no zero in the torus
     $(\K^*)^2$. Therefore, $f_m$ is convenient and \WNND, so that
     \cite[Prop.~3.17]{BP00} and Lemma~\ref{lem:nuN} imply 
     for sufficiently large $m$  
     \begin{displaymath}
       \delta(f)=\delta(f_m)=\nu(f_m)=\delta_N(f_m)=\delta_N(f)=\nu(f).
     \end{displaymath}
   \end{proof}

   \lang{We denote by $r(f)$ the number of branches of $f$ counted with
     multiplicity, i.e.\ the number of irreducible factors of
     $f$. Moreover, we introduce the combinatorial counterpart of $r$ as
     \begin{displaymath}
       r_N(f)=\sum_{i=1}^k l(\Delta_i)+
       \max\{j\;|\;x^j\text{ divides }f\}+\max\{l\;|\;y^l\text{ divides }f\}.
     \end{displaymath}}
   \kurz{Now we compare the number of branches $r(f)$ of $f$ with its
     combinatorial counterpart $r_N(f)$.}
   The following result is implicit in Beelen and Pellikan (\cite{BP00}).

   \begin{lemma}\label{lem:r_N}
     For $0\not=f\in\langle x,y\rangle$ we have $r(f)\leq r_N(f)$, and if $f$ is \WNND\ then $r_N(f)=r(f)$.
   \end{lemma}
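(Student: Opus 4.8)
The plan is to reduce to a series coprime to $xy$ and then analyse the Newton polygon edge by edge, using that the lattice length of an edge equals the number of torus factors of the associated initial form counted with multiplicity. First I would strip off the coordinate axes: write $f=x^a y^b g$ with $g$ coprime to $xy$, so that $r(f)=a+b+r(g)$ and $a,b$ are exactly the two maximal divisibility exponents occurring in $r_N(f)$. Multiplication by $x^a y^b$ merely translates the Newton diagram, hence preserves the edges and their lattice lengths, so $\sum_i l(\Delta_i)$ agrees for $f$ and $g$. Moreover $\In_{\Delta_i}(f)=x^a y^b\In_{\Delta_i'}(g)$, and since $x^a y^b$ is a unit on the torus a short computation with the product rule shows that $\tj(\In_{\Delta_i}(f))$ and $\tj(\In_{\Delta_i'}(g))$ have the same zeros in $(\K^*)^2$ (using that the first generator forces the initial form to vanish, which simplifies the partials). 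Thus $f$ is \WNND\ if and only if $g$ is, and it suffices to prove $r(g)\le r_N(g)=\sum_i l(\Delta_i)$ with equality under \WNND, for $g$ coprime to $xy$.

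The inequality I would derive from the multiplicativity of initial forms. Factoring $g=\prod_\iota g_\iota$ into branches, each irreducible $g_\iota$ is coprime to $xy$, so its Newton polygon is a single segment meeting both axes of some primitive slope $w_\iota=(w_1,w_2)$; hence for an edge $\Delta$ of that slope $\In_\Delta(g_\iota)=c\cdot(y^{w_1}-\lambda_\iota x^{w_2})^{l(g_\iota)}$ is a power of a single irreducible weighted homogeneous factor (irreducible because $\gcd(w_1,w_2)=1$), and is a monomial for edges of other slopes. Since $\In_\Delta(g)=\prod_\iota\In_\Delta(g_\iota)$ and lattice length is additive under multiplication of weighted homogeneous polynomials, the lattice length $l(\Delta_i)$ equals the sum of the $l(g_\iota)$ over the branches whose slope equals that of $\Delta_i$. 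As each branch contributes lattice length at least one to exactly one edge, the number of branches is at most $\sum_i l(\Delta_i)$, giving $r(g)\le r_N(g)$.

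For the equality under \WNND\ I would use weak non-degeneracy to rule out repeated torus factors. Writing $\In_{\Delta_i}(g)=c\cdot x^{a_1}y^{b_2}\prod_k(y^{w_1}-\lambda_k x^{w_2})^{m_k}$ with distinct $\lambda_k\in\K^*$, the difference vector of the edge is $(\sum_k m_k)\cdot(w_2,-w_1)$, so $l(\Delta_i)=\sum_k m_k$ as $w$ is primitive. If some $m_k\ge 2$, then at any torus root of $y^{w_1}-\lambda_k x^{w_2}$ both $\In_{\Delta_i}(g)$ and its two partial derivatives vanish, producing a zero of $\tj(\In_{\Delta_i}(g))$ in $(\K^*)^2$ and contradicting \WND\ along $\Delta_i$. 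Hence all $m_k=1$, so $l(\Delta_i)$ is the number of distinct torus factors; combined with the previous paragraph this forces every branch of slope $w$ to have lattice length one and distinct branches to have distinct tangents $\lambda$, so the number of branches of that slope is exactly $l(\Delta_i)$. Summing over the edges and adding back the $a+b$ axis branches yields $r(f)=r_N(f)$.

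The main obstacle I expect is the lifting step implicit in the last paragraph: passing from reducedness of the initial form (no repeated torus factor) to the statement that each simple factor lifts to exactly one branch of $g$ of lattice length one. In characteristic zero this is the classical Newton--Puiseux argument, but in positive characteristic one must argue via Hensel's lemma or the Hamburger--Noether expansion. This is precisely the content that is \emph{implicit} in \cite{BP00}, which I would invoke here rather than reprove, using elementary facts about Newton polygons only for the inequality and for the additivity of lattice length.
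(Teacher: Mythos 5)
Your proposal is correct and follows essentially the same route as the paper: the inequality comes from the fact that each branch contributes at least one to the lattice length of the facet matching its slope, and the equality under \WNND\ ultimately rests on the lifting argument of Beelen--Pellikaan \cite[Prop.~3.17, Prop.~3.18]{BP00}, which is exactly what the paper cites at the decisive step. The extra detail you supply (the reduction to $f$ coprime to $xy$, the multiplicativity of initial forms, and the explicit check that \WND\ along a facet rules out repeated torus factors) is material the paper leaves implicit inside that citation, so there is no genuine divergence in method.
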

   \begin{proof}
     If $j$ and $l$ are the maximal such that $x^j$ and $y^l$ divide
     $f$, then
     \begin{displaymath}
       r_N(f)=\sum_{i=1}^k l(\Delta_i)+j+l.
     \end{displaymath}
     It is well known that the 
     lattice length of a facet of the Newton diagram of $f$ is an
     upper bound for
     the number of branches of $f$ corresponding to this facet. This
     implies the inequality $r(f)\leq r_N(f)$.
     The proof of \cite[Prop.~3.17]{BP00} shows then that $f$ has
     indeed $l(\Delta_i)$ branches corresponding to $\Delta_i$, if $f$
     is \WND\ along $\Delta_i$ (see also
     \cite[Prop.~3.18]{BP00}). This shows that  $f$ has exactly
     $r_N(f)$ branches, counting the branches $x$ and $y$ with
     multiplicity, if $f$ is \WNND. 
   \end{proof}


   \begin{lemma}\label{lem:milnormuN}
     If $0\not=f\in\langle x,y\rangle\subset\K[[x,y]]$, then $\mu_N(f)=2\cdot\delta_N(f)-r_N(f)+1$.
   \end{lemma}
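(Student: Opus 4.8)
The plan is to reduce the statement to the convenient case, where it is a one line computation, and then to handle the general case by the device $f_m=f+x^m+y^m$ already employed in the proof of Lemma~\ref{lem:nuN}. First I would dispose of the convenient case. If $f$ is convenient then $\Gamma(f)$ meets both axes, so $f$ has a pure power of $x$ and a pure power of $y$ among its terms; in particular neither $x$ nor $y$ divides $f$, whence $\max\{j\mid x^j\text{ divides }f\}=\max\{l\mid y^l\text{ divides }f\}=0$ and therefore $r_N(f)=\sum_{i=1}^k l(\Delta_i)$. Substituting the convenient definitions $\mu_N(f)=2V_2(\Gamma_-(f))-V_1(\Gamma_-(f))+1$ and $\delta_N(f)=V_2(\Gamma_-(f))-\tfrac12 V_1(\Gamma_-(f))+\tfrac12\sum_i l(\Delta_i)$ from Remark~\ref{rem:invarianten} into the right hand side, the two copies of $\sum_i l(\Delta_i)$ cancel and one reads off $2\delta_N(f)-r_N(f)+1=2V_2(\Gamma_-(f))-V_1(\Gamma_-(f))+1=\mu_N(f)$.

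For a non-convenient $f$ I would pass to $f_m=f+x^m+y^m$, which is convenient for $m\gg 0$, so the convenient case gives $\mu_N(f_m)=2\delta_N(f_m)-r_N(f_m)+1$ for all large $m$. It then suffices to show that each of the three quantities agrees for $f$ and for $f_m$ once $m$ is large. For $\mu_N$ and $\delta_N$ this is precisely their definition as a supremum together with the stabilisation argument used in Lemma~\ref{lem:nuN}: once $m$ exceeds the coordinates occurring in $\Gamma(f)$, increasing $m$ by one enlarges $\Gamma_-(f_m)$ by an area that is exactly balanced by the length added along the corresponding axis, so that $V_2$ and $V_1$ change in lockstep and both $2V_2-V_1+1$ and $V_2-\tfrac12 V_1+\tfrac12\sum_i l(\Delta_i)$ become constant, equal to $\mu_N(f)$ and $\delta_N(f)$.

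The crux is therefore the combinatorial identity $r_N(f_m)=r_N(f)$ for $m\gg 0$, and here I would analyse how $\Gamma(f_m)$ arises from $\Gamma(f)$. For large $m$ every facet of $\Gamma(f)$ survives unaltered, and the two monomials $x^m$, $y^m$ create at most one new facet near each axis. If $y$ divides $f$, then $l_0:=\max\{l\mid y^l\text{ divides }f\}\geq 1$, and the case $l_0\geq 2$ means $y^2\mid f$, in which situation $\mu_N(f)$ and $\delta_N(f)$ are both infinite while $r_N(f)$ is finite, so both sides equal $\infty$; in the remaining finite case $l_0=1$ the lowest vertex $(u,1)$ of $\Gamma(f)$ is joined to $(m,0)$ by a new facet of lattice length $\gcd(m-u,1)=1=l_0$, whereas if $y\nmid f$ no new facet appears near the $x$-axis and the contribution is $0=l_0$. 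Running the symmetric analysis at the $y$-axis produces the contribution $j_0:=\max\{j\mid x^j\text{ divides }f\}$. Hence $r_N(f_m)=\sum_i l(\Delta_i)+l_0+j_0=r_N(f)$, which is exactly the defining expression for $r_N(f)$, and combining the three equalities yields the claim.

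The \textbf{main obstacle} I anticipate is the bookkeeping in this last step: one must verify that for large $m$ no existing facet of $\Gamma(f)$ is altered or merged, that exactly one new facet is produced at each coordinate axis that $f$ fails to meet, and that its lattice length equals the corresponding $j_0$ or $l_0$ — which in the finite case is forced to equal $1$ precisely because $x^2\nmid f$ and $y^2\nmid f$. Everything else is either the direct substitution of the convenient case or a citation of the stabilisation already established for $\delta_N$ in Lemma~\ref{lem:nuN}.
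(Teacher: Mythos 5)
Your proposal is correct and follows essentially the same route as the paper: reduce to the convenient case (where the identity is immediate from the definitions), pass to $f_m=f+x^m+y^m$ for large $m$, use the stabilisation of $\mu_N$ and $\delta_N$ as in Lemma~\ref{lem:nuN}, and check that $r_N(f_m)=r_N(f)$ because the only new facets have lattice length one (with the case $x^2\mid f$ or $y^2\mid f$ disposed of by both sides being infinite). The paper states these steps more tersely; your case analysis at the axes is exactly the bookkeeping it leaves implicit.
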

   \begin{proof}
     If $x^2$ or $y^2$ divides $f$, then both sides of the equation
     are infinite, and we may thus assume that this is not the case.

     Suppose now that $\Gamma(f)$ has the facets
     $\Delta_1,\ldots,\Delta_k$ and let $m$ be very
     large. Then $\Gamma(f_m)$ also has the facets
     $\Delta_1,\ldots,\Delta_k$, and it has an additional facet of
     lattice length one if $x$ divides $f$ and the same for $y$.
     In particular,  $r_N(f)=r_N(f_m)$.

     Since $f_m$ is convenient the
     definition of $\mu_N$, $\delta_N$ and $r_N$ gives right away
     \begin{displaymath}
       \mu_N(f_m)=2\cdot \delta_N(f_m)-r_N(f_m)+1.
     \end{displaymath}
     Moreover, for sufficiently large $m$ we have
     $\mu_N(f_m)=\mu_N(f)$ and $\delta_N(f_m)=\delta_N(f)$, and hence
     \bmath
       \mu_N(f)=2\cdot \delta_N(f)-r_N(f)+1.
     \emath
   \end{proof}

   Combining the last three results we get the following
   generalisation of the result of Beelen and Pellikan.

   \begin{theorem}\label{thm:beelen-pellikaan}
     If $0\not=f\in\langle x,y\rangle\subset\K[[x,y]]$ is \WNND, then
     $\mu_N(f)=2\cdot\delta(f)-r(f)+1$. 
   \end{theorem}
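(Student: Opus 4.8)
The plan is to obtain this theorem purely by assembling the three preceding results, since all the genuine work has already been carried out in Lemma~\ref{lem:milnormuN}, Proposition~\ref{prop:nudelta} and Lemma~\ref{lem:r_N}. The starting point is the purely combinatorial identity of Lemma~\ref{lem:milnormuN}, which holds for \emph{every} non-zero $f\in\langle x,y\rangle$ and involves only the invariants attached to the Newton diagram:
\begin{displaymath}
  \mu_N(f)=2\cdot\delta_N(f)-r_N(f)+1.
\end{displaymath}
No non-degeneracy hypothesis enters here, so this equation is available unconditionally.

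The next step is to invoke the \WNND\ hypothesis to replace the two combinatorial quantities on the right-hand side by their genuine singularity-theoretic counterparts. Proposition~\ref{prop:nudelta} asserts that weak Newton non-degeneracy forces $\delta_N(f)=\delta(f)$, and Lemma~\ref{lem:r_N} asserts that it likewise forces $r_N(f)=r(f)$. I would state both of these explicitly, emphasising that each equality requires precisely the \WND\ condition along the facets of $\Gamma(f)$ (for $r_N=r$ this is the statement that each facet $\Delta_i$ contributes exactly $l(\Delta_i)$ branches, and for $\delta_N=\delta$ it is the coincidence $\nu(f)=\delta(f)$ routed through Lemma~\ref{lem:nuN}).

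The final step is the substitution: plugging $\delta_N(f)=\delta(f)$ and $r_N(f)=r(f)$ into the identity from Lemma~\ref{lem:milnormuN} yields immediately
\begin{displaymath}
  \mu_N(f)=2\cdot\delta(f)-r(f)+1,
\end{displaymath}
which is the claim. There is no genuine obstacle in this proof itself; the difficulty has been entirely absorbed into the prior results, in particular into verifying that the lattice length of each facet equals the number of branches along it under weak non-degeneracy. It is worth remarking that the degenerate cases where $x^2$ or $y^2$ divides $f$ are already handled inside each of the three cited results, where both sides become infinite, so no separate case distinction is needed here. Thus the theorem is a direct corollary, and I would write it as a two-line combination of the cited statements.
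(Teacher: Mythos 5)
Your proposal is correct and follows exactly the paper's own proof, which likewise derives the theorem by combining Lemma~\ref{lem:milnormuN}, Proposition~\ref{prop:nudelta} and Lemma~\ref{lem:r_N}. Your additional remarks (where the \WNND\ hypothesis actually enters, and that the degenerate cases $x^2\mid f$ or $y^2\mid f$ are absorbed into the cited results) are accurate elaborations of the same argument, not a different route.
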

   \begin{proof}
     The result follows from Lemma \ref{lem:milnormuN}, 
     Proposition \ref{prop:nudelta} and Lemma \ref{lem:r_N}.
   \end{proof}

   Together with Kouchnirenko's formula for the Milnor number in
   Proposition \ref{prop:planarnnd} we deduce then that Milnor's formula
   $\mu(f)=2\cdot\delta(f)-r(f)+1$ in characteristic zero (see
   \cite{Mil68} or \cite[Prop.~3.35]{GLS07})
   holds in arbitrary characteristic for Newton non-degenerate
   singularities, even without the condition of convenience. 

   \begin{theorem}\label{thm:milnorformula}
     If $0\not=f\in\langle x,y\rangle\subset\K[[x,y]]$ is \NND, then
     $\mu(f)=2\cdot\delta(f)-r(f)+1$. 
   \end{theorem}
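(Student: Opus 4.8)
The plan is to reduce this statement to two results already established in the planar case, namely Proposition~\ref{prop:planarnnd} and Theorem~\ref{thm:beelen-pellikaan}, since together they supply the two halves of the desired identity. Proposition~\ref{prop:planarnnd} gives $\mu(f)=\mu_N(f)$ directly from the \NND\ hypothesis, while Theorem~\ref{thm:beelen-pellikaan} gives $\mu_N(f)=2\cdot\delta(f)-r(f)+1$ under the \emph{weaker} hypothesis \WNND. The only genuine point to verify is that \NND\ implies \WNND, after which the two equalities chain together immediately.

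First I would verify the implication \NND\ $\Longrightarrow$ \WNND. By definition \NND\ means that $f$ is \ND\ along every face of $\Gamma(f)$, in particular along every \emph{facet}. By Remark~\ref{rem:nd}~(b), being \ND\ along a face $\Delta$ implies being \WND\ along $\Delta$, since the Jacobian ideal is contained in the Tjurina ideal and so the absence of a zero in the torus is inherited. Applying this to each facet of $\Gamma(f)$ shows that $f$ is \WND\ along every facet, which is precisely the condition \WNND.

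Next I would invoke Proposition~\ref{prop:planarnnd}, whose hypothesis is exactly the \NND\ assumption we are given, to obtain
\begin{displaymath}
  \mu(f)=\mu_N(f).
\end{displaymath}
Then, using the implication just established that $f$ is \WNND, I would apply Theorem~\ref{thm:beelen-pellikaan} to obtain
\begin{displaymath}
  \mu_N(f)=2\cdot\delta(f)-r(f)+1.
\end{displaymath}
Combining the two displayed equalities yields $\mu(f)=2\cdot\delta(f)-r(f)+1$, as claimed.

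Since every ingredient is already in place, I do not expect any real obstacle; the argument is essentially a bookkeeping combination of earlier propositions. The only step requiring a moment's care is the implication \NND\ $\Longrightarrow$ \WNND, where one must keep in mind that \WNND\ is a condition on \emph{facets} only while \NND\ is a condition on \emph{all} faces, so the implication runs in the expected direction. The characteristic-dependent subtleties distinguishing \ND\ from \WND\ (compare Remark~\ref{rem:nd}~(b),(c)) only ever make \WND\ easier to satisfy, so the reasoning is uniform in the characteristic of $\K$.
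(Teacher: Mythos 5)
Your proof is correct and follows exactly the paper's own route: the authors likewise deduce Theorem~\ref{thm:milnorformula} by combining Proposition~\ref{prop:planarnnd} ($\mu(f)=\mu_N(f)$ for \NND\ singularities) with Theorem~\ref{thm:beelen-pellikaan} ($\mu_N(f)=2\cdot\delta(f)-r(f)+1$ for \WNND\ singularities), the implication \NND\ $\Rightarrow$ \WNND\ being the obvious one from Remark~\ref{rem:nd}~(b). Your explicit verification of that implication is a nice touch, since the paper leaves it implicit.
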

   
   Without the assumption of Newton non-degeneracy one has at least an
   inequality as was proved by Melle and Wall
   \cite[Formula~(14)]{MW01} based on a result by Deligne
   \cite[Theorem~2.4]{Del73}. We are grateful to Alejandro Melle for
   pointing out this result to us.

   \begin{proposition}[Deligne, Melle-Wall]\label{prop:mellewall}
     If $0\not=f\in\langle x,y\rangle\subset\K[[x,y]]$, then $\mu(f)\geq 2\cdot\delta(f)-r(f)+1$.
   \end{proposition}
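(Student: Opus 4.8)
The plan is to deduce this unconditional lower bound for $\mu(f)$ from Deligne's formula for the vanishing cohomology of a plane curve singularity, in the form given by Melle and Wall. The conceptual point is that in arbitrary characteristic the quantity $\mu(f)-\big(2\cdot\delta(f)-r(f)+1\big)$ need not vanish, but is instead controlled by the wild ramification of the nearby/vanishing cycle sheaf attached to $f$. First I would invoke Deligne's theorem \cite[Theorem~2.4]{Del73}, which for a reduced plane curve germ decomposes the total dimension of the complex of vanishing cycles of $f$ into a tame part, accounting exactly for $2\cdot\delta(f)-r(f)+1$, and a wild part, expressible as a sum of Swan conductors of the vanishing cycle sheaf along the branches of $f$. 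Here $\mu(f)$ is the algebraic Milnor number $\dim_\K\big(\K[[x,y]]/\jj(f)\big)$, and part of the preparatory work is to know (as Melle and Wall do) that this agrees with the dimension of the vanishing cohomology.

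Once Deligne's identity is available the argument is purely formal. Writing his formula as
\begin{displaymath}
  \mu(f)=2\cdot\delta(f)-r(f)+1+\mathrm{Sw}(f),
\end{displaymath}
where $\mathrm{Sw}(f)\geq 0$ denotes the total Swan conductor measuring the wild vanishing cycles, the inequality $\mu(f)\geq 2\cdot\delta(f)-r(f)+1$ follows at once from the nonnegativity of Swan conductors, which is a general fact of ramification theory. This is precisely the content of \cite[Formula~(14)]{MW01}, so the cleanest route is to cite their reformulation directly. In characteristic zero the wild part is automatically absent, $\mathrm{Sw}(f)=0$, and one recovers Milnor's equality; in positive characteristic the inequality can be strict, and this is exactly the gap that Theorem~\ref{thm:milnorformula} closes for \ND\ singularities, whence Corollary~\ref{cor:mellewall} on the absence of wild vanishing cycles.

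I expect the genuine obstacle to lie entirely in Deligne's formula itself, which I would not reprove here. It rests on the theory of étale nearby and vanishing cycles over a strictly henselian trait and on the identification of the tame part of the vanishing cohomology with the classical invariants $\delta(f)$ and $r(f)$; establishing the nonnegative wild correction term requires the full Swan conductor formalism. Since the proposition is stated as a result of Deligne and Melle--Wall, the appropriate treatment is to record the reduction above and cite \cite[Theorem~2.4]{Del73} and \cite[Formula~(14)]{MW01} for the hard input, rather than to attempt a self-contained derivation.
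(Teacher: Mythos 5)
Your proposal is correct and matches the paper's treatment exactly: the paper does not reprove this result either, but cites \cite[Formula~(14)]{MW01}, which rests on Deligne's theorem \cite[Theorem~2.4]{Del73}, and notes (as you do) that the discrepancy $\mu(f)-\bigl(2\cdot\delta(f)-r(f)+1\bigr)$ is the nonnegative Swan character counting wild vanishing cycles. Your formal reduction via $\mu(f)=2\cdot\delta(f)-r(f)+1+\mathrm{Sw}(f)$ with $\mathrm{Sw}(f)\geq 0$ is precisely the intended reading of the cited sources.
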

    
   The difference of the two sides is measured by the so called Swan
   character which counts  \emph{wild} vanishing cycles that can
   only occur in positive characteristic. For details we refer to
   \cite{MW01} and \cite{Del73}. The last two results imply

   \begin{corollary}\label{cor:mellewall}
   There are no wild vanishing cycles in positive characteristic if f is NND.
   \end{corollary}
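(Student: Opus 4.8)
The plan is to read the statement off directly from the two immediately preceding results, once their difference is interpreted correctly. By Proposition~\ref{prop:mellewall} (Deligne, Melle--Wall) one always has the inequality $\mu(f)\geq 2\cdot\delta(f)-r(f)+1$, so the quantity
\begin{displaymath}
  \mu(f)-\big(2\cdot\delta(f)-r(f)+1\big)\geq 0
\end{displaymath}
is a well-defined nonnegative integer. Following \cite{MW01} and \cite{Del73}, this defect in Milnor's formula is precisely what the Swan character measures, and it counts the wild vanishing cycles that can only appear in positive characteristic. Hence ``$f$ has no wild vanishing cycles'' is, by this interpretation, equivalent to the vanishing of the above difference, i.e.\ to equality holding in Milnor's formula.

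The argument itself is then immediate. First I would invoke Theorem~\ref{thm:milnorformula}: since $f$ is \ND, the exact equality $\mu(f)=2\cdot\delta(f)-r(f)+1$ holds in arbitrary characteristic. Substituting this into the displayed difference yields zero, so the Swan character vanishes and $f$ has no wild vanishing cycles. No estimates or case distinctions are required at this stage, because all of the substantive content has already been packaged into Theorem~\ref{thm:milnorformula}, which in turn rests on Proposition~\ref{prop:planarnnd} (giving $\mu(f)=\mu_N(f)$ without convenience) and on Theorem~\ref{thm:beelen-pellikaan} (giving $\mu_N(f)=2\cdot\delta(f)-r(f)+1$ under weak Newton non-degeneracy, which \ND\ implies).

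The single point that deserves explicit care — and the step I expect to be the main obstacle in exposition rather than in mathematics — is the identification of ``no wild vanishing cycles'' with equality in Milnor's formula. This is not proved internally in the present paper but is imported from the Melle--Wall formulation in \cite[Formula~(14)]{MW01}, where the Swan conductor contribution is shown to be exactly the defect $\mu(f)-(2\cdot\delta(f)-r(f)+1)$. I would therefore make the convention explicit, stating that we adopt their interpretation of the Swan character as this defect, so that its vanishing is by definition the absence of wild cycles. With that understood, the corollary follows at once by combining Theorem~\ref{thm:milnorformula} with Proposition~\ref{prop:mellewall}.
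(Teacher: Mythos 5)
Your proposal is correct and follows exactly the paper's own argument: the paper likewise deduces the corollary immediately from Theorem~\ref{thm:milnorformula} together with Proposition~\ref{prop:mellewall}, with the identification of the defect $\mu(f)-(2\cdot\delta(f)-r(f)+1)$ as the Swan-character count of wild vanishing cycles imported from \cite{MW01} and \cite{Del73} rather than proved internally. Your explicit flagging of that imported identification is exactly the role played by the paper's remark preceding the corollary.
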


   Note that we always have the inequalities
   \begin{displaymath}
     \mu_N(f)\leq 2\cdot \delta_N(f)-r(f)+1\leq 2\cdot \delta(f)-r(f)+1\leq\mu(f).
   \end{displaymath}
   It is
   easy to see that the equality may be violated in positive
   characteristic, and that the above inequalities may be
   strict. E.g.\ $\Char(\K)=2$, $f=(x-y)^2+x^5$ then $\mu_N(f)=1$, $\delta_N(f)=1$,
   $\delta(f)=2$, $r(f)=1$, $\mu(f)=\infty$, so that
   \begin{displaymath}
     \mu_N(f)<2\cdot\delta_N(f)-r(f)+1<2\cdot \delta(f)-r(f)+1<\mu(f).
   \end{displaymath}
   Note that the first two inequalities hold in characteristic zero as
   well.

   We can now use the above results to measure the difference between
   $\mu(f)$ and $\mu_N(f)$ better and thereby generalise a result of
   P{\l}oski \cite{Plo99}, who proved this for $\K=\C$ and $f$ convenient.

   \begin{proposition}
     If $0\not=f\in\langle x,y\rangle\subset\K[[x,y]]$, then $\mu(f)-\mu_N(f)\geq r_N(f)-r(f)\geq 0$.
   \end{proposition}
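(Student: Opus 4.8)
The plan is to obtain both inequalities by assembling the identities and estimates already established in this section; essentially no new geometric input is required. The right-hand inequality $r_N(f)-r(f)\geq 0$ is precisely Lemma~\ref{lem:r_N}, so the entire task reduces to proving the left-hand inequality $\mu(f)-\mu_N(f)\geq r_N(f)-r(f)$.

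First I would recall the Deligne--Melle--Wall estimate of Proposition~\ref{prop:mellewall}, namely $\mu(f)\geq 2\cdot\delta(f)-r(f)+1$, together with the purely combinatorial identity of Lemma~\ref{lem:milnormuN}, namely $\mu_N(f)=2\cdot\delta_N(f)-r_N(f)+1$. Subtracting the second relation from the first gives
\begin{displaymath}
  \mu(f)-\mu_N(f)\geq 2\cdot\big(\delta(f)-\delta_N(f)\big)+\big(r_N(f)-r(f)\big).
\end{displaymath}
Now Proposition~\ref{prop:nudelta} supplies $\delta_N(f)\leq\delta(f)$, so the first summand on the right is nonnegative and may be discarded, leaving $\mu(f)-\mu_N(f)\geq r_N(f)-r(f)$. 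Combined once more with Lemma~\ref{lem:r_N} this produces the full chain $\mu(f)-\mu_N(f)\geq r_N(f)-r(f)\geq 0$.

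The only point requiring care is the bookkeeping with infinite values: when $x^2$ or $y^2$ divides $f$ the invariants $\mu$, $\mu_N$, $\delta$ and $\delta_N$ are all infinite, so I would first dispose of this degenerate case exactly as in the proofs of Lemmas~\ref{lem:nuN} and~\ref{lem:r_N}, and then argue under the standing assumption that neither $x^2$ nor $y^2$ divides $f$. Under that assumption all four quantities entering the subtraction are finite and the manipulation above is literally valid. I do not expect any genuine obstacle here: the substantive work has already been carried out in the cited results, and the proposition is a formal consequence of combining the Melle--Wall inequality, the identity for $\mu_N$, and the two comparison estimates $\delta_N(f)\leq\delta(f)$ and $r(f)\leq r_N(f)$.
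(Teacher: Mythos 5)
Your main line of argument is exactly the paper's: the proposition is assembled from Proposition \ref{prop:mellewall}, Lemma \ref{lem:milnormuN}, the comparison $\delta_N(f)\leq\delta(f)$ from Proposition \ref{prop:nudelta}, and Lemma \ref{lem:r_N}. The paper strings these together as a single chain
\begin{displaymath}
  \mu(f)\;\geq\; 2\delta(f)-r(f)+1\;\geq\; 2\delta_N(f)-r(f)+1
  \;=\;\mu_N(f)+\big(r_N(f)-r(f)\big)\;\geq\;\mu_N(f),
\end{displaymath}
whereas you subtract the identity for $\mu_N$ from the Melle--Wall inequality; this is the same argument in a slightly different packaging.

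There is, however, a genuine error in your bookkeeping of infinite values. Excluding $x^2\mid f$ and $y^2\mid f$ does \emph{not} make all the relevant invariants finite: it guarantees finiteness of the combinatorial quantities $\mu_N(f)$, $\delta_N(f)$, $r_N(f)$ and of $r(f)$, but $\mu(f)$ and $\delta(f)$ may still be infinite. The paper's own example immediately preceding this proposition, $f=(x-y)^2+x^5$ in characteristic $2$, is divisible by neither $x^2$ nor $y^2$ and yet has $\mu(f)=\infty$; likewise any non-reduced series such as $(x+y)^2$ has $\delta(f)=\infty$ without being divisible by $x^2$ or $y^2$. So your sentence ``under that assumption all four quantities entering the subtraction are finite and the manipulation above is literally valid'' is false, and it is precisely what your subtraction step leans on. The repair is easy: either observe that the subtraction only requires finiteness of the quantities actually being subtracted, namely $\mu_N(f)$, $\delta_N(f)$, $r_N(f)$, $r(f)$ (with the convention $\infty-c=\infty$ for finite $c$); or first dispose of the case $\mu(f)=\infty$, where $\mu(f)-\mu_N(f)=\infty$ because $\mu_N(f)$ is finite, and note that $\mu(f)<\infty$ forces $\delta(f)<\infty$ by Proposition \ref{prop:mellewall}; or, best of all, phrase the argument as the chain of inequalities above, i.e.\ read the claim as $\mu(f)\geq\mu_N(f)+r_N(f)-r(f)$, which is insensitive to infinite values and needs no case distinction whatsoever.
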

   \begin{proof}
     Combining Proposition \ref{prop:mellewall} with Lemma
     \ref{lem:nuN}, Lemma \ref{lem:r_N} and Lemma
     \ref{lem:milnormuN} we get
     \begin{align*}
       \mu(f)\;\geq\; &2\cdot\delta(f)-r(f)+1 \geq 
       2\cdot\delta_N(f)-r(f)+1\\\;=\;&2\cdot
       \delta_N(f)-r_N(f)+1+(r_N(f)-r(f))\\
       \;=\;&\mu_N(f)+(r_N(f)-r(f))\geq \mu_N(f),
     \end{align*}
     which proves the claim.
   \end{proof}



\providecommand{\bysame}{\leavevmode\hbox to3em{\hrulefill}\thinspace}

\end{document}